\def\serieslogo@{} \def\@setcopyright{} \makeatother
\renewcommand*\env@matrix[1][c]{\hskip -\arraycolsep
  \let\@ifnextchar\new@ifnextchar
  \array{*\c@MaxMatrixCols #1}}
\numberwithin{equation}{section}
\newtheorem{thm}{Theorem}[section]
\newtheorem{cor}[thm]{Corollary}
\newtheorem{lem}[thm]{Lemma}
\newtheorem{prop}[thm]{Proposition}
\newtheorem{Theox}{Theorem}
\theoremstyle{definition}
\newtheorem{defn}[thm]{Definition}
\newtheorem{rem}[thm]{Remark}
\newtheorem{exam}[thm]{Example}
\newtheorem{problem}{Problem}
\newcommand{\lxr}{\longrightarrow}
\newcommand{\A}{\mathscr A}
\newcommand{\B}{\mathscr B}
\newcommand{\C}{\mathscr C}
\newcommand{\D}{\mathscr D}
\newcommand{\F}{\mathcal F}
\newcommand{\T}{\mathcal T}
\newcommand{\U}{\mathcal U}
\newcommand{\V}{\mathcal V}
\newcommand{\X}{\mathcal X}
\newcommand{\Y}{\mathcal Y}
\newcommand{\mL}{\mathsf{L}}
\newcommand{\mt}{\mathsf{T}}
\newcommand{\mh}{\mathsf{H}}
\newcommand{\mU}{\mathsf{U}}
\newcommand{\mz}{\mathsf{Z}}
\newcommand{\mr}{\mathsf{r}}
\newcommand{\mq}{\mathsf{q}}
\newcommand{\mi}{\mathsf{i}}
\newcommand{\ml}{\mathsf{l}}
\newcommand{\me}{\mathsf{e}}
\newcommand{\map}{\mathsf{p}}
 \DeclareMathOperator{\inc}{\mathsf{inc}}
\DeclareMathOperator*{\Ker}{\mathsf{Ker}}
 \DeclareMathOperator*{\Image}{\mathsf{Im}}
\DeclareMathOperator*{\Coker}{\mathsf{Coker}}
 \DeclareMathOperator{\pd}{\mathsf{pd}}
\DeclareMathOperator*{\id}{\mathsf{id}}
 \DeclareMathOperator*{\gd}{\mathsf{gl.dim}}
  \DeclareMathOperator*{\maxx}{\mathsf{max}}
\DeclareMathOperator*{\Mod}{\mathsf{Mod}-\!}
 \DeclareMathOperator*{\smod}{\mathsf{mod}-\!}
\DeclareMathOperator*{\Inj}{\mathsf{Inj}}
\DeclareMathOperator*{\Proj}{\mathsf{Proj}}
\newcommand{\GProj}{\operatorname{\mathsf{GProj}}\nolimits}
 \newcommand{\Gproj}{\operatorname{\mathsf{Gproj}}\nolimits}
 \newcommand{\GInj}{\operatorname{\mathsf{GInj}}\nolimits}
 \newcommand{\Ginj}{\operatorname{\mathsf{Ginj}}\nolimits}
\DeclareMathOperator*{\add}{\mathsf{add}}
\DeclareMathOperator{\Hom}{\mathsf{Hom}}
\DeclareMathOperator*{\Ext}{\mathsf{Ext}}
  \DeclareMathOperator*{\op}{\mathsf{op}}
  \DeclareMathOperator*{\silp}{\mathsf{silp}}
  \DeclareMathOperator*{\spli}{\mathsf{spli}}
   \DeclareMathOperator*{\dime}{\mathsf{dim}}
\DeclareMathOperator*{\Mor}{\mathsf{Mor}}
\newcommand{\iso}{\cong}
\newcommand{\iden}{\operatorname{Id}\nolimits}
\newsavebox{\proofbox}
\savebox{\proofbox}{\begin{picture}(7,7)%
  \put(0,0){\framebox(7,7){}}\end{picture}}
\begin{document}


\vspace*{-1cm}

\title[]{Ladders of Recollements of Abelian Categories}

\author[]{Nan Gao, Steffen Koenig and Chrysostomos Psaroudakis}
\address{Nan Gao \\ Department of Mathematics, Shanghai University, Shanghai 200444, PR China}
\email{nangao@shu.edu.cn}

\address{Steffen Koenig \\ Institute of Algebra and Number Theory, University of Stuttgart, Pfaffenwaldring 57, 70569 Stuttgart, Germany}
\email{skoenig@mathematik.uni-stuttgart.de}

\address{Chrysostomos Psaroudakis\\
Department of Mathematics, Aristotle University of Thessaloniki, Thessaloniki 54124, Greece}
\email{chpsaroud@math.auth.gr}

\date{\today}

\keywords{Ladders, derived categories, singularity categories, torsion pairs,
Gorenstein categories, Gorenstein projective objects.}

\subjclass[2010]{16E10;16E35;16E65;16G;16G50;16S50;18E;18E30}

\begin{abstract}
Ladders of recollements of abelian categories are introduced, and used to
address three general problems. Ladders of a certain height allow to construct
recollements of triangulated categories, involving derived categories and
singularity categories, from abelian ones. Ladders also allow
to tilt abelian recollements, and ladders guarantee that properties like
Gorenstein projective or injective are preserved by some functors
in abelian recollements. Breaking symmetry is crucial in 
developing this theory.
\end{abstract}

\maketitle

\setcounter{tocdepth}{1} \tableofcontents

\section{Introduction and main results}

Recollements of triangulated or abelian categories
\[
\xymatrix@C=0.5cm{
\A \ar[rrr]^{\mathsf{i}} &&& \B \ar[rrr]^{\mathsf{e}}  \ar @/_1.5pc/[lll]_{\mathsf{q}}  \ar
 @/^1.5pc/[lll]^{\mathsf{p}} &&& \C
\ar @/_1.5pc/[lll]_{\mathsf{l}} \ar
 @/^1.5pc/[lll]^{\mathsf{r}}
 }
\]
can be seen as short exact sequences or semi-orthogonal decompositions, deconstructing a large middle term $\B$
into smaller end terms $\A$ and $\C$. Introduced by Beilinson, Bernstein and Deligne \cite{BBD} for triangulated categories,
recollements have been used to stratify derived categories of sheaves and, following Cline, Parshall and Scott \cite{CPS}, to stratify highest weight
categories in algebraic Lie theory. Recollements of derived categories also are used to provide reduction techniques for homological conjectures,
long exact sequences for homological or K-theoretic invariants and comparisons of homological or K-theoretic data. For module categories of rings, each idempotent $e$ in a ring $B$ provides natural analogues of Grothendieck's six functors, defining recollements of module categories
\[
\xymatrix@C=0.5cm{
\Mod{B/BeB} \ar[rrr]^{\inc} &&& \Mod{B} \ar[rrr]^{e(-)} \ar
@/_1.5pc/[lll]_{B/BeB\otimes_B-}  \ar
 @/^1.5pc/[lll]^{\Hom_B(B/BeB,-)} &&& \Mod{eBe}
\ar @/_1.5pc/[lll]_{Be\otimes_{eBe}-} \ar
 @/^1.5pc/[lll]^{\Hom_{eBe}(eB,-)}
 }
\]
These, and more generally recollements of abelian
categories have been used in various contexts, too (see for instance \cite{Buchweitz, Pira, Psaroud:survey}). There are,
however, big differences between the triangulated and the abelian setup. In particular, by \cite{PsaroudVitoria:1}, all recollements of module
categories are, up to equivalence, given by idempotents, and these recollements usually do not induce recollements of the corresponding
derived categories. More precisely, to be able to construct a recollement
of derived module categories
\[
\xymatrix@C=0.5cm{
\mathsf{D}(\Mod{B/BeB}) \ar[rrr]^{\inc} &&& \mathsf{D}(\Mod{B}) \ar[rrr]^{e(-)} \ar
@/_1.5pc/[lll]_{}  \ar
 @/^1.5pc/[lll]^{} &&& \mathsf{D}(\Mod{eBe})
\ar @/_1.5pc/[lll]_{} \ar
 @/^1.5pc/[lll]^{}
 }
 \]
one has to make the strong assumption that $BeB$ is a
stratifying ideal, that is, the inclusion of $B/BeB$ into $B$ is a homological embedding. Moreover, by deriving abelian recollements one does not obtain, up to equivalence, all triangulated recollements of derived categories (\!\!\cite{AKLY2}). In general, the rings $B$, $A=B/BeB$ and $C=eBe$ may not have much structure in common.

In general, the existence of a triangulated recollement often is difficult to establish and then provides a strong tool. The existence of
an abelian recollement often is easy to establish, but without further assumptions or information it does not provide a strong tool.
The aim of this article is to systematically enhance the definition of abelian recollements by additional data called ladders, which are
sequences of adjoint functors. In contrast to the triangulated situation (see \cite{BGS, AKLY1, QH}) we propose an asymmetric definition. Breaking the symmetry will turn out to be necessary in order to develop the full range and scope of the theory and making it generally applicable. Ladders
of recollements and their heights are defined as follows:

\begin{defn}~\label{ladderab}~\label{defnladder}
Let $\B$ and $\C$ be abelian categories with an adjoint triple between them: \\
\begin{minipage}{0.2 \textwidth}
\[
\xymatrix@C=0.5cm{
\B \ar[rrr]^{\mathsf{e}}  &&& \C
\ar @/_1.5pc/[lll]_{\mathsf{l}} \ar
 @/^1.5pc/[lll]^{\mathsf{r}} }
\]
\end{minipage}
\vspace*{-0.5cm}
Set $\ml^0:=\ml$ and $\mr^0:=\mr$. A {\bf ladder} is a finite or infinite diagram of additive functors
\hspace*{-1cm}
\begin{minipage}{0.4\textwidth}
\xymatrix@C=0.5cm{
 \ \ \ \ \ \ \ \ \ \ \ \ \ \ \ \ \ \ \ \ \  \ \ \  \ \  \ \ \ \ \ \ \ \ \ \ \ \ \vdots &&& \\
 &&&  &&& \\
 \B \ar[rrr]^{\mathsf{e}} \ar @/^3.0pc/[rrr]^{\mathsf{l}^1}   \ar @/_3.0pc/[rrr]_{\mathsf{r}^1}  &&& \C \ar @/_4.5pc/[lll]_{\mathsf{l}^2} \ar @/^4.5pc/[lll]^{\mathsf{r}^2}
\ar @/_1.5pc/[lll]_{\mathsf{l}^0} \ar
 @/^1.5pc/[lll]^{\mathsf{r}^0} \\
 &&& &&& \\
 \ \ \ \ \ \ \ \ \ \ \ \ \ \ \ \ \ \ \ \ \  \ \ \  \ \  \ \ \ \ \ \ \ \ \ \ \ \ \ \vdots &&& \\
 }
\end{minipage}
\begin{minipage}{0.47\textwidth}
such that $(\ml^{i+1},\ml^i)$ and $(\mr^{i},\mr^{i+1})$ are adjoint pairs for all $i\geq 0$. We say that the $\ml$-{\bf height} of a ladder is $n$, if there is a tuple $(\ml^{n-1}, \dots, \ml^2,\ml^1,\ml^0)$ of consecutive left adjoints. The $\mr$-{\bf height} of a ladder is defined similarly.
\end{minipage}

\ Let $(\A,\B,\C)$ be a recollement of abelian categories. Set $\ml^0:=\ml$ and $\mr^0:=\mr$. A {\bf ladder} of $(\A,\B,\C)$ is a ladder of the adjoint triple $(\ml, \me,\mr)$ between $\B$ and $\C$, i.e. a finite or infinite diagram of additive functors
\begin{minipage}{0.7\textwidth}
\[
\xymatrix@C=0.5cm{
 &&& \ \ \ \ \ \ \ \ \ \ \ \ \ \ \ \ \ \ \ \ \  \ \ \  \ \  \ \ \ \ \ \ \ \ \ \ \ \ \vdots &&& \\
 &&&  &&& \\
\A \ar[rrr]^{\mathsf{i}} &&& \B \ar[rrr]^{\mathsf{e}} \ar @/^3.0pc/[rrr]^{\mathsf{l}^1}   \ar @/_1.5pc/[lll]_{\mathsf{q}} \ar @/_3.0pc/[rrr]^{\mathsf{r}^1}  \ar
 @/^1.5pc/[lll]^{\mathsf{p}} &&& \C \ar @/_4.5pc/[lll]_{\mathsf{l}^2} \ar @/^4.5pc/[lll]_{\mathsf{r}^2}
\ar @/_1.5pc/[lll]_{\mathsf{l}^0} \ar
 @/^1.5pc/[lll]_{\mathsf{r}^0} \\
 &&& &&& \\
 &&& \ \ \ \ \ \ \ \ \ \ \ \ \ \ \ \ \ \ \ \ \  \ \ \  \ \  \ \ \ \ \ \ \ \ \ \ \ \ \vdots &&& \\
 }
 \]
\end{minipage}
\begin{minipage}{0.29\textwidth}
The $\ml$-{\bf height} of a ladder of $(\A,\B,\C)$ is the
$\ml$-height of the ladder of the adjoint triple $(\ml,\me, \mr)$.
The $\mr$-{\bf height} of a ladder of $(\A,\B,\C)$ is defined similarly. The {\bf height} of a ladder of $(\A,\B,\C)$ is the sum of the $\ml$-height and the $\mr$-height. The given recollement $(\A,\B,\C)$ then is considered to be a ladder of height one.
\end{minipage}
\end{defn}

In subsection~\ref{subsection:asymmetry}, our reasons for choosing this asymmetry will be explained by comparing the asymmetric ladders introduced here with symmetric ones that turn out to be more limited in their scope.

After collecting basic properties and classes of examples, some of which show already that the length of a ladder in certain recollements is
closely related to homological properties of two-sided ideals in rings, feasibility of this new concept will be demonstrated by addressing three
problems in situations where homological embeddings are not known or not
assumed to exist:

\begin{problem}
Given a recollement of abelian categories enhanced by a left or right ladder of a certain length, is it possible to produce a
recollement of triangulated categories, involving derived or singularity categories? Derived categories and singularity categories
are denoted by $\mathsf{D}$ and $\mathsf{D}_{\textsf{sg}}$, respectively.
\end{problem}

\begin{Theox}[part of Theorem~\ref{mainthmsingladder}]
\label{derivedcat}
Let $(\A,\B,\C)$ be a recollement of abelian categories.
\begin{enumerate}
\item Assume that $(\A,\B,\C)$ has a ladder of $\ml$-height three. Then there
exists a triangle equivalence
\[
\xymatrix{
  \mathsf{D}_{\textsf{sg}}(\B)/\Ker{\ml^1} \ \ar[r]^{ \  \ \ \simeq} & \
  \mathsf{D}_{\textsf{sg}}(\C) }
\]
and  a recollement of triangulated categories
\[
\xymatrix@C=0.5cm{
 \mathsf{D}(\C)\ar[rrr]^{\ml^0} &&& \mathsf{D}(\B) \ar[rrr]^{}  \ar @/_1.5pc/[lll]_{\mathsf{l}^1}  \ar
 @/^1.5pc/[lll]^{\me} &&& \mathsf{D}_{\A}(\B)
\ar @/_1.5pc/[lll]_{} \ar
 @/^1.5pc/[lll]^{}
 }
\]
which restricts to the bounded derived categories.

\item Assume that $(\A,\B,\C)$ has a ladder of $\ml$-height three and $\mr$-height two. Then $(\ml^1, \ml^0, \me)$ induces an adjoint triple between $\mathsf{D}_{\textsf{sg}}(\B)$ and $\mathsf{D}_{\textsf{sg}}(\C)$ and there exists
a recollement of triangulated categories
\[
\xymatrix@C=0.5cm{
 \mathsf{D}_{\textsf{sg}}(\C)\ar[rrr]^{\ml^0} &&& \mathsf{D}_{\textsf{sg}}(\B) \ar[rrr]^{}  \ar @/_1.5pc/[lll]_{\mathsf{l}^1}  \ar
 @/^1.5pc/[lll]^{\me} &&& \Ker{\ml^1}
\ar @/_1.5pc/[lll]_{} \ar
 @/^1.5pc/[lll]^{}
 }
\]

\item Assume that $(\A,\B,\C)$ has a ladder of $\ml$-height four. Then
there exists a recollement of triangulated categories
\[
\xymatrix@C=0.5cm{
\Ker{\ml^1} \ar[rrr]^{} &&& \mathsf{D}_{\textsf{sg}}(\B) \ar[rrr]^{\ml^1}  \ar @/_1.5pc/[lll]_{}  \ar
 @/^1.5pc/[lll]^{} &&& \mathsf{D}_{\textsf{sg}}(\C)
\ar @/_1.5pc/[lll]_{\mathsf{l}^2} \ar
 @/^1.5pc/[lll]^{\ml^0}
 }
\]
\end{enumerate}
\end{Theox}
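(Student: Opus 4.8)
The plan is to read off exactness and preservation of projectives for the ladder functors directly from the adjoint chain
\[
\cdots \dashv \ml^2 \dashv \ml^1 \dashv \ml^0 \dashv \me \dashv \mr^0 \dashv \cdots
\]
and then to transport the resulting data to the triangulated level by the universal property of Verdier localization. A functor occurring in this chain as both a left and a right adjoint is exact, and a left adjoint whose right adjoint is exact preserves projectives. Hence $\me$ is exact (recollement), $\ml^0$ is exact once $\ml^1$ exists ($\ml$-height $\geq 2$), $\ml^1$ is exact once $\ml^2$ exists ($\ml$-height $\geq 3$), and $\ml^2$ is exact only once $\ml^3$ exists ($\ml$-height $\geq 4$). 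This numerology is exactly what separates the three hypotheses, and it is what forces $\ml$-height four in the third assertion: only then is the prospective left adjoint $\ml^2$ exact, hence available on the singularity level.

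I would first verify that each exact ladder functor induces a triangle functor on bounded derived categories, and that the abelian adjunctions $(\ml^{i+1},\ml^i)$ lift to $\mathsf{D}^b$. Under the stated heights the functors $\ml^0,\ml^1,\ml^2$ each have an exact right adjoint in the chain ($\me,\ml^0,\ml^1$ respectively), so each preserves projectives and carries $\mathsf{K}^b(\Proj)$ into $\mathsf{K}^b(\Proj)$; therefore all three descend to triangle functors on $\mathsf{D}_{\textsf{sg}}=\mathsf{D}^b/\mathsf{K}^b(\Proj)$, and the lifted adjunctions descend with them. The same mechanism, now invoking only exactness, yields the derived recollement of the first assertion from $\ml^0,\ml^1,\me$ on $\mathsf{D}^b(\B)$, with $\mathsf{D}_{\A}(\B)$ arising as the kernel of the induced quotient; and the hypothesis of $\mr$-height two in the second assertion is precisely what makes $\mr^0$ exact, so that $\me$ too preserves projectives and descends to $\mathsf{D}_{\textsf{sg}}(\B)$.

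The recollements are then read off from a single formal principle: a triangle functor with a fully faithful one-sided adjoint is a Verdier quotient by its kernel, and a quotient admitting fully faithful adjoints on both sides assembles into a recollement with that kernel as third term. The full faithfulness is supplied by the given recollement: since $\ml^0=\ml$ is fully faithful there, the counit $\ml^1\ml^0\to\id_{\C}$ is an isomorphism of exact functors, and its descent keeps it an isomorphism on $\mathsf{D}_{\textsf{sg}}(\C)$. Thus $\ml^1\colon\mathsf{D}_{\textsf{sg}}(\B)\to\mathsf{D}_{\textsf{sg}}(\C)$ is a Verdier quotient with kernel $\Ker\ml^1$ and fully faithful right adjoint $\ml^0$, which is already the triangle equivalence of the first assertion. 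For the third assertion one simply adjoins the left adjoint: with $\ml$-height four the exact functor $\ml^2$ descends as a left adjoint of the quotient $\ml^1$, is therefore automatically fully faithful, and the pair $(\ml^2,\ml^0)$ of two-sided fully faithful adjoints turns $\ml^1$ into the asserted recollement. The second assertion is the mirror assembly, with $\ml^0$ as the fully faithful embedding carrying the adjoints $\ml^1$ and $\me$.

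The step I expect to be the main obstacle is the passage of the adjunctions through the localization at $\mathsf{K}^b(\Proj)$: one must confirm that the lifted pairs $(\ml^{i+1},\ml^i)$ descend to genuine adjoint pairs on $\mathsf{D}_{\textsf{sg}}$, and that the kernel of the localized $\ml^1$ is exactly $\Ker\ml^1$ and no larger. Granting this, the quotient structure is forced, for the descended counit $\ml^1\ml^0\to\id$ remains an isomorphism as the image of an isomorphism of exact functors; consequently the full faithfulness of the left adjoint $\ml^2$ in the third assertion comes for free rather than needing a separate check at the abelian level. The genuine work is thus concentrated in this localization of adjoints, after which the recollement axioms hold formally.
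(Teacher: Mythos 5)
Your proposal is correct and follows essentially the same route as the paper's proof: establish exactness and preservation of projectives from the adjoint chain (the same height bookkeeping the paper uses), lift the adjoint triples to bounded derived categories, descend them through the Verdier quotient by $\mathsf{K}^{\mathsf{b}}(\Proj)$, and assemble the recollements from the formal principle that a quotient functor with fully faithful adjoints on one or both sides yields an equivalence or a recollement (the paper's Lemmas~\ref{exactadjoint}, \ref{lempropertiestrianglefunctors} and \ref{lemgettingrecolVerdierquotient}). The localization-of-adjoints step you flag as the main obstacle is exactly what the paper delegates to the cited results of Bondal--Kapranov and Orlov.
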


In (ii) and (iii), the outer terms of the triangulated recollements are
swapping their roles. In both situations, the kernel of a triangulated
functor measures the difference between the two singularity categories.
\bigskip

The second problem uses the following concept of {\em tilt} of an abelian
category:

Let $\C$ be an abelian category with a torsion pair $(\mathcal{T}, \mathcal{F})$. 
Set
\[
\mathcal{H}_{\C}:=\big\{C^{\bullet}\in \mathsf{D}(\mathcal{\C}) \ | \ \mh^{0}(C^{\bullet})\in \mathcal{T}, \mh^{-1}(C^{\bullet})\in \mathcal{F},
\mh^{i}(C^{\bullet})=0, \forall i> 0, \mh^{i}(C^{\bullet})=0, \forall i<-1 \big\}
\]
Then $\mathcal{H}_{\C}$ is called
the Happel-Reiten-Smal\o{} {\em tilt} ({\textsf{HRS}-tilt} 
or just tilt for short) of 
$\C$ by $(\mathcal{T}, \mathcal{F})$, see \cite{HRS}.

\begin{problem} Many abelian categories are derived equivalent, for instance by tilting, and thus occur as hearts of t-structures in the same triangulated
category. No tilting procedure is known that is compatible with abelian recollements. Is it possible to use enhancements by ladders to produce
``tilted"(with respect to torsion theories) recollements both on abelian and on derived level?
\end{problem}

\begin{Theox} \label{tilt}
\ Let $(\A, \B, \C)$ be a recollement of abelian categories which admits a ladder of $\ml$-height three. Assume that
$(\mathcal{T}, \mathcal{F})$ is a torsion pair on $\B$ such that $\ml^{0}\circ \ml^{1}(\mathcal{F})\subseteq \mathcal{F}$ and $\ml^2\circ \ml^1(\T)\subseteq \T$.

\begin{enumerate}
\item $(\ml^{1}(\mathcal{T}), \ml^{1}(\mathcal{F}))$ is a torsion pair on $\C$. We denote by $\mathcal{H}_{\C}$ the  tilt
of $\C$ by $(\ml^{1}(\mathcal{T}), \ml^{1}(\mathcal{F}))$.

\item There exists a recollement of abelian categories\\
\begin{minipage}{0.5\textwidth}
\[
\xymatrix@C=0.5cm{
{\rm S}_{\mathcal H} \ar[rrr]^{} &&& \mathcal{H}_{\B} \ar[rrr]^{\ml^1_{\mathcal{H}}}  \ar @/_1.5pc/[lll]_{}  \ar
 @/^1.5pc/[lll]^{} &&& \mathcal{H}_{\C}
\ar @/_1.5pc/[lll]_{\mathsf{l}^2_{\mathcal{H}}} \ar
 @/^1.5pc/[lll]^{\ml^{0}_{\mathcal{H}}}
 }
\]
\end{minipage} where $\mathcal{S}_{\mathcal H}=\Ker(\ml^1_{\mathcal{H}})$.

\item If the heart $\mathcal{H}_{\C}$ has enough projectives, then there exists a recollement of triangulated categories
\[
\xymatrix@C=0.5cm{
\Ker(\ml^1_{\mathcal{H}}) \ar[rrr]^{} &&& \mathsf{D}(\mathcal{H}_{\B}) \ar[rrr]^{\ml^1_{\mathcal{H}}}  \ar @/_1.5pc/[lll]_{}  \ar
 @/^1.5pc/[lll]^{} &&& \mathsf{D}(\mathcal{H}_{\C})
\ar @/_1.5pc/[lll]_{\mathbb{L}\mathsf{l}^2_{\mathcal{H}}} \ar
 @/^1.5pc/[lll]^{\ml^{0}_{\mathcal{H}}}
 }
\]
\end{enumerate}
\end{Theox}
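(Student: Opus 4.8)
The plan is to first extract the relevant structure from the top three rungs of the ladder and then treat the three parts in turn. Since an $\ml$-height-three ladder provides an adjoint triple $\ml^{2}\dashv\ml^{1}\dashv\ml^{0}$, the functor $\ml^{1}$ has both a left and a right adjoint and is therefore exact. As $\ml^{0}=\ml$ is fully faithful, the counit of $\ml^{1}\dashv\ml^{0}$ gives $\ml^{1}\ml^{0}\cong\id_{\C}$; and since in an adjoint triple $F\dashv G\dashv H$ the outer $F$ is fully faithful exactly when $H$ is, the functor $\ml^{2}$ is fully faithful as well, so $\ml^{1}\ml^{2}\cong\id_{\C}$. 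Thus the top of the ladder is itself an abelian recollement $(\Ker\ml^{1},\B,\C)$ with quotient functor $\me'=\ml^{1}$ and fully faithful sections $\ml'=\ml^{2}$, $\mr'=\ml^{0}$ (this also follows from the basic ladder properties established earlier), and the two hypotheses read as the compatibilities $\mr'\me'(\F)\subseteq\F$ and $\ml'\me'(\T)\subseteq\T$. For part (i) I would then verify the standard recognition criterion for $(\ml^{1}\T,\ml^{1}\F)$: Hom-orthogonality is immediate, since for $T\in\T$, $F\in\F$ one has $\Hom_{\C}(\ml^{1}T,\ml^{1}F)\cong\Hom_{\B}(T,\ml^{0}\ml^{1}F)=0$ by $\ml^{0}\ml^{1}(\F)\subseteq\F$ (equivalently via $\ml^{2}\dashv\ml^{1}$ and $\ml^{2}\ml^{1}(\T)\subseteq\T$); and for the decomposition, any $C\in\C$ satisfies $C\cong\ml^{1}(\ml^{0}C)$, so applying the exact $\ml^{1}$ to the torsion sequence $0\to T_{0}\to\ml^{0}C\to F_{0}\to 0$ of $\ml^{0}C$ yields $0\to\ml^{1}T_{0}\to C\to\ml^{1}F_{0}\to 0$ with ends in $\ml^{1}\T$, $\ml^{1}\F$. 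The criterion forces $\ml^{1}\T={}^{\perp}(\ml^{1}\F)$ and $\ml^{1}\F=(\ml^{1}\T)^{\perp}$, giving both the torsion pair and its closure properties.

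For part (ii) I would tilt the recollement $(\Ker\ml^{1},\B,\C)$ along the compatible torsion pairs $(\T,\F)$ and $(\ml^{1}\T,\ml^{1}\F)$. Passing to derived categories, the exact functors $\ml^{1},\ml^{0}$ induce triangle functors with $\ml^{1}\dashv\ml^{0}$ and $\ml^{1}\ml^{0}\cong\id$; I equip $\mathsf{D}(\B)$ and $\mathsf{D}(\C)$ with the HRS-tilted $t$-structures having hearts $\mathcal{H}_{\B}$ and $\mathcal{H}_{\C}$. Computing on ordinary cohomology shows $\ml^{1}$ is $t$-exact, so it restricts to an exact quotient functor $\ml^{1}_{\mathcal{H}}:\mathcal{H}_{\B}\to\mathcal{H}_{\C}$ with $\mathcal{S}_{\mathcal H}=\Ker\ml^{1}_{\mathcal{H}}$. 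The right adjoint is $\ml^{0}_{\mathcal{H}}:=\mh^{0}_{\mathcal{H}_{\B}}\circ\ml^{0}$, where the hypothesis $\ml^{0}\ml^{1}(\F)\subseteq\F$ is exactly what places $\ml^{0}(\mathcal{H}_{\C})$ in the coaisle of the tilted $t$-structure on $\mathsf{D}(\B)$, making $\ml^{0}_{\mathcal{H}}$ a well-defined left-exact functor; $t$-exactness of $\ml^{1}$ together with $\ml^{1}\ml^{0}\cong\id$ then gives $\ml^{1}_{\mathcal{H}}\ml^{0}_{\mathcal{H}}\cong\id$, so $\ml^{0}_{\mathcal{H}}$ is fully faithful. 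The dual construction applied to the right-exact $\ml^{2}$ produces a right-exact fully faithful left adjoint $\ml^{2}_{\mathcal{H}}$, with $\ml^{2}\ml^{1}(\T)\subseteq\T$ ensuring it is well defined and satisfies $\ml^{1}_{\mathcal{H}}\ml^{2}_{\mathcal{H}}\cong\id$. Assembling $(\mathcal{S}_{\mathcal H},\mathcal{H}_{\B},\mathcal{H}_{\C})$ and checking the adjunctions, the fully-faithfulness just shown, and $\Image=\Ker$ on the kernel side gives the recollement via the recognition theorem for recollements from an adjoint triple with fully faithful sections. This step is the main obstacle: $\ml^{0}$ and $\ml^{2}$ are not $t$-exact and do not preserve the hearts on the nose, so one must correct them by $\mh^{0}_{\mathcal{H}_{\B}}$ and re-prove adjunction and full-faithfulness; the two hypotheses are precisely the cohomological constraints controlling the single offending cohomology group in each case, and bookkeeping which hypothesis governs the coaisle/right-adjoint half and which the aisle/left-adjoint half is the delicate point.

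For part (iii) I would derive the abelian recollement of (ii). The exact functor $\ml^{1}_{\mathcal{H}}$ and the inclusion $\mathcal{S}_{\mathcal H}\hookrightarrow\mathcal{H}_{\B}$ extend to triangle functors, and the left-exact $\ml^{0}_{\mathcal{H}}$ extends to their right adjoint; the assumption that $\mathcal{H}_{\C}$ has enough projectives is used precisely to left-derive the right-exact $\ml^{2}_{\mathcal{H}}$, producing $\mathbb{L}\ml^{2}_{\mathcal{H}}$ as left adjoint of $\ml^{1}_{\mathcal{H}}$. Since the isomorphisms $\ml^{1}_{\mathcal{H}}\ml^{0}_{\mathcal{H}}\cong\id$ and $\ml^{1}_{\mathcal{H}}\mathbb{L}\ml^{2}_{\mathcal{H}}\cong\id$ survive to the derived level, the adjoints remain fully faithful, and the same criterion used in Theorem~\ref{derivedcat} yields the recollement of triangulated categories with kernel $\Ker(\ml^{1}_{\mathcal{H}})$ on $\mathsf{D}(\mathcal{H}_{\B})$.
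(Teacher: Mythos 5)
Your proposal is correct and follows essentially the same route as the paper: part (ii) is proved exactly as in the text, by passing to the HRS-tilted $t$-structures, using $t$-exactness of $\ml^1$, defining $\ml^0_{\mathcal H}$ and $\ml^2_{\mathcal H}$ as $\mathsf{H}^0$ composed with $\ml^0$ and $\mathbb{L}\ml^2$, establishing the adjunctions via the truncation adjoints of the (co)aisles, and invoking the reconstruction of a recollement from an adjoint triple with exact middle term and fully faithful outer adjoint; part (iii) is likewise obtained by deriving, with enough projectives in $\mathcal{H}_{\C}$ used only to form $\mathbb{L}\ml^2_{\mathcal H}$. The only divergence is in part (i), where you verify the torsion-pair axioms directly from the adjunctions and exactness of $\ml^1$ instead of citing the Giraud-subcategory results of Colpi--Fiorot--Mattiello as the paper does; your direct argument is sound (and in fact slightly more careful than the paper's remark that $\ml^0$ is $t$-exact, since you only use $t$-exactness of $\ml^1$ together with $\ml^1\ml^0\cong\mathsf{id}$ and the coaisle condition coming from $\ml^0\ml^1(\mathcal{F})\subseteq\mathcal{F}$).
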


\begin{problem}
In general, an abelian recollement, for instance of module categories, does not provide connections between homological properties
of the categories, or rings, involved nor between objects and their images under the six functors. Can the enhanced definition by ladders be used to obtain such connections, for instance in terms of Gorenstein homological algebra?
\end{problem}

\begin{Theox}[part of Theorem~\ref{pregproj}]
\label{Gorentein}
Let $(\A,\B,\C)$ be a recollement of abelian categories.
\begin{enumerate}

\item Assume that $(\A,\B,\C)$ has a ladder of $\ml$-height three. Then the functor $\ml^{1}\colon \B\lxr \C$ preserves the property of being Gorenstein and the functor $\me\colon \B\lxr \C$ preserves the property of being Gorenstein injective. Furthermore, if $\B$ is $n$-Gorenstein, then $\C$ is $n$-Gorenstein.

\item Assume that  $(\A,\B,\C)$ admits a ladder of $\ml$-height four. Then the functor $\ml\colon \C\lxr \B$ preserves Gorenstein injective objects. Moreover, $\me\circ\ml\cong {\iden}_{\GInj\C}$.
\end{enumerate}
\end{Theox}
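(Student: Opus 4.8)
The plan is to reduce both parts to two principles: a functor that possesses adjoints on \emph{both} sides is exact, and the adjunction principle that a left adjoint preserves projectives when its right adjoint is exact (dually, a right adjoint preserves injectives when its left adjoint is exact). First I would record the exactness consequences of the hypotheses. From the chain $\ml^1\dashv\ml^0\dashv\me$ (available once the $\ml$-height is at least two, resp.\ three) both $\ml^0$ and $\ml^1$ acquire adjoints on either side and are therefore exact; in $\ml$-height four the same reasoning makes $\ml^2$ exact. Feeding these into the adjunction principle gives: $\ml^0$ preserves projectives (as $\me$ is exact) and injectives (as $\ml^1$ is exact); $\ml^1$ preserves projectives (as $\ml^0$ is exact); $\me$ preserves injectives (as $\ml^0$ is exact); and, in $\ml$-height four, $\ml^1$ preserves injectives (as $\ml^2$ is exact). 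I would also record the counit isomorphisms $\me\ml^0\cong\iden_\C$ and $\ml^1\ml^0\cong\iden_\C$, the latter because $\ml^0$ is fully faithful and is the right adjoint of $\ml^1$.

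For the preservation of Gorenstein projective objects by $\ml^1$ in part (i), I would take a Gorenstein projective $G\in\B$ with a totally acyclic complex of projectives $P^\bullet$, $G=Z^0(P^\bullet)$, and apply $\ml^1$. Exactness of $\ml^1$ makes $\ml^1(P^\bullet)$ acyclic with $\ml^1(G)=Z^0$, and its terms are projective in $\C$. The only substantial point is total acyclicity: for projective $Q\in\C$ the adjunction $\ml^1\dashv\ml^0$ rewrites $\Hom_\C(\ml^1(P^\bullet),Q)\cong\Hom_\B(P^\bullet,\ml^0(Q))$, which is acyclic because $\ml^0$ preserves projectives. The claim that $\me$ preserves Gorenstein injective objects is the exact dual, using $\ml^0\dashv\me$ and the fact that the left adjoint $\ml^0$ preserves injectives; this is precisely where $\ml$-height three (not merely two) is needed. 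The same dual mechanism settles part (ii): $\ml=\ml^0$ preserves Gorenstein injective objects because it is exact, preserves injectives, and its left adjoint $\ml^1$ preserves injectives, the last requiring $\ml$-height four. The isomorphism $\me\circ\ml\cong\iden_{\GInj\C}$ is then merely the restriction of the recollement counit isomorphism $\me\ml^0\cong\iden_\C$.

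For the $n$-Gorenstein transfer I would first observe that $\C$ has enough projectives and injectives: given $C\in\C$, an epimorphism $P\twoheadrightarrow\ml^0(C)$ from a projective of $\B$ yields, after applying the exact, projective-preserving $\ml^1$ and using $\ml^1\ml^0\cong\iden_\C$, an epimorphism $\ml^1(P)\twoheadrightarrow C$ from a projective; dually $\me$ produces enough injectives via $\me\ml^0\cong\iden_\C$. To bound injective dimensions of projectives I would use the derived adjunction $\Ext^i_\C(C,\me X)\cong\Ext^i_\B(\ml^0 C,X)$, valid since $\ml^0$ is exact and preserves projectives. For projective $Q\in\C$, setting $X=\ml^0(Q)$ and using $\me\ml^0\cong\iden_\C$ gives $\Ext^i_\C(C,Q)\cong\Ext^i_\B(\ml^0 C,\ml^0 Q)$; as $\ml^0(Q)$ is projective in the $n$-Gorenstein category $\B$, the right-hand side vanishes for $i>n$, so $\id_\C(Q)\leq n$. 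Dually, $\ml^1\dashv\ml^0$ yields $\Ext^i_\C(\ml^1 B,C)\cong\Ext^i_\B(B,\ml^0 C)$; for injective $J\in\C$, taking $B=\ml^0(J)$ and using $\ml^1\ml^0\cong\iden_\C$ gives $\Ext^i_\C(J,C)\cong\Ext^i_\B(\ml^0 J,\ml^0 C)$, and since $\ml^0(J)$ is injective in $\B$ the right side vanishes for $i>n$, so $\pd_\C(J)\leq n$. Thus $\silp(\C)\leq n$ and $\spli(\C)\leq n$, i.e.\ $\C$ is $n$-Gorenstein.

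I expect the main difficulty to be bookkeeping the exact $\ml$-height at which each preservation becomes available: the asymmetry between projectives and injectives forces $\ml^1$ to govern Gorenstein projectives while $\me$ and $\ml^0$ govern Gorenstein injectives, and each Gorenstein-injective statement costs one extra rung of the ladder beyond the naive count (height three rather than two for $\me$, height four rather than three for $\ml^0$). The genuinely load-bearing step is the total-acyclicity transfer (and its $\Ext$-vanishing counterpart in the $n$-Gorenstein argument): this is exactly where the additional adjoints supplied by the ladder enter, since without them the image complexes would still consist of projectives or injectives but need not remain totally acyclic.
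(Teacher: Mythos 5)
Your proposal is correct and follows essentially the same route as the paper: exactness of $\ml^0$ and $\ml^1$ from their two-sided adjoints, the adjunction transfers $\Hom_{\C}(\ml^1(P^{\bullet}),Q)\cong\Hom_{\B}(P^{\bullet},\ml^0(Q))$ and $\Hom_{\C}(J,\me(I^{\bullet}))\cong\Hom_{\B}(\ml^0(J),I^{\bullet})$ to verify total acyclicity, and the unit/counit isomorphisms together with the preservation statements to get $\me\circ\ml\cong\iden_{\GInj\C}$. The only (cosmetic) divergence is in the $n$-Gorenstein transfer, where the paper transports a length-$n$ projective resolution of $\ml^0(I)$ along $\ml^1$ and a length-$n$ injective coresolution of $\ml^0(P)$ along $\me$, whereas you deduce the same bounds $\spli{\C}\leq n$ and $\silp{\C}\leq n$ from the derived adjunction isomorphisms (the paper's Lemma~\ref{Extadjoint}); both arguments rest on exactly the same preservation facts.
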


In section 2, some facts about ladders of recollements $(\A,\B,\C)$ of abelian categories will be collected, and the reason for the asymmetry in the definition
will be explained.
In section 3, a number of examples of ladders are presented and projectivity of two-sided ideals will be tested using ladders.
In section 4, ladders of recollements are used to construct recollements of
triangulated categories, proving Theorem~\ref{derivedcat}. In section 5, a technique is provided to produce new torsion pairs in abelian categories via adjoint functors and in particular through Giraud subcategories, and also to provide new recollements of the tilts, proving Theorem~\ref{tilt}. In section 6,
Gorenstein properties are compared using ladders, and Theorem~\ref{Gorentein} is proved.

\subsection*{Conventions and Notation}

For an additive category $\A$, we denote by $\underline{\A}$ the stable category of $\A$. For an additive  functor $F\colon \A \lxr \B$ between additive categories, we denote by $\Image F = \{B \in \B \ | \ B \cong F(A) \ \text{for some} \ A \in \A\}$ the essential image of $F$ and by $\Ker F = \{A \in \A \ | \ F(A) = 0\}$ the kernel of $F$. For an abelian category $\A$, and two classes $\mathcal{X}$ and $\mathcal{Y}$ of objects in $\A$, we put $\mathcal{X}^{\perp}=\{Y\in \A\mid {\rm Ext}^{i}_{\A}(X, Y)=0, \forall \ i>0 \ and \ X\in \mathcal{X}\}$ and $^{\perp}\mathcal{Y}=\{X\in \A\mid {\rm Ext}^{i}_{\A}(X, Y)=0, \ for \ all \ i>0 \ and \ Y\in \mathcal{Y}\}$.

We denote by $\mathsf{D}(\A)$ the derived category of  an abelian category $\A$. Given a recollement of abelian categories  $(\A,\B,\C)$, we denote by $\mathsf{D}_{\A}(\B)$ the full subcategory of $\mathsf{D}(\B)$, whose objects are complexes of objects in $\B$ with cohomologies in $\mi(\A)$.

When considering triangulated categories like derived categories of abelian
categories, existence of these categories always is assumed implicitly.

\subsection*{Acknowledgments} 
This is part of a long-term project of the authors, continuing the first and the third named author's work (\!\!\cite{GaoPsaroudakis2}). Most of this research has been carried out during the first and the third named author's visits to Stuttgart in 2015; work on the project and preparation of 
this article has continued during visits to Stuttgart in the years 
2016, 2017, 2018 and 2019. 
Part of the work has been supported by the National Natural Science
Foundation of China (grant 11771272, held by the first named author) and 
by the German research council DFG through a
postdoctoral position held by the third named author in 2017 and 2018. \\
Some of the results have been announced in the survey article 
\cite{Psaroud:survey}. The article quoted there as [GKP2015] has been extended 
further and split into two articles. The current one is the first of these 
articles. \\
Some of the results of this article and the subsequent one have been presented 
in talks by the first named author at a workshop in Xiamen and at a 
conference in Nagoya.

\section{Definitions and first properties}

\subsection{\bf Recollements}\ Recall the definition of a recollement of abelian categories, see for instance \cite{BBD, Pira, Psaroud:survey}.

\begin{defn}
\label{defnrecolabel}
A \textbf{recollement} between
abelian categories $\A,\B$ and $\C$ is a diagram
\begin{equation}
\label{recollement}
\xymatrix@C=0.5cm{
\A \ar[rrr]^{\mathsf{i}} &&& \B \ar[rrr]^{\mathsf{e}}  \ar @/_1.5pc/[lll]_{\mathsf{q}}  \ar
 @/^1.5pc/[lll]^{\mathsf{p}} &&& \C
\ar @/_1.5pc/[lll]_{\mathsf{l}} \ar
 @/^1.5pc/[lll]^{\mathsf{r}}
 }
\end{equation}
henceforth denoted by $\mathsf{R}_{\mathsf{ab}}(\A,\B,\C)$ or just $(\A,\B,\C)$, satisfying the following conditions$\colon$
\begin{enumerate}
\item $(\mq,\mi,\map)$ and $(\ml,\me,\mr)$ are  adjoint triples.

\item The functors $\mi$, $\ml$, and $\mr$ are fully faithful.

\item $\Image{\mi}=\Ker{\me}$.
\end{enumerate}
\end{defn}

To compare recollements, the definition of equivalence of recollements from
\cite{PsaroudVitoria:1} will be used:

\begin{defn} \label{equivrecs}
Let $(\A,\B,\C)$ and $(\A',\B',\C')$ be two recollements of abelian categories. We say that $(\A,\B,\C)$ and $(\A',\B',\C')$ are {\em equivalent} if there are functors $F\colon \B\lxr \B'$ and $F'\colon \C\lxr \C'$ which are equivalences of categories such that the following diagram commutes up to natural equivalence$\colon$
\[
\xymatrix{
\B \ar[r]^{\me} \ar[d]_{F}^{\cong} & \C \ar[d]^{F'}_{\cong} \\
 \B' \ar[r]^{\me'}  & \C'  \\
}
\]
\end{defn}

{\bf Notation for units and counits.} Throughout,
we denote by  $\mu\colon  \ml\circ\me \lxr \iden_{\B}$, resp. $\kappa\colon \mi\circ\map \lxr \iden_{\B}$, the counit of the adjoint pair $(\ml,\me)$, resp. $(\mi,\map)$, and  by $\lambda \colon
\iden_{\B} \lxr \mi\circ\mq$, resp.  $\nu\colon  \iden_{\B} \lxr \mr\circ\me$, the unit of the adjoint pair $(\mq,\mi)$, resp. $(\me,\mr)$.
\smallskip

Here are some basic properties of functors between abelian categories,
to be used throughout the article: Left adjoints preserve colimits and
thus are right exact. Right adjoints preserve limits and thus are left
exact. Basic properties of functors in abelian recollements, to be used
frequently, are as follows:

\begin{rem}
\label{remdefrecol}
Let $(\A,\B,\C)$ be a recollement of abelian categories.
\begin{enumerate}
\item The functors $\me\colon\B\lxr \C$ and $\mi\colon\A\lxr \B$ are exact.

\item The composition of functors $\mq\circ\ml=\map\circ\mr=0$.

\item The counit $\me\circ\mr \lxr \iden_{\C} $ of the adjoint pair $(\me,\mr)$, the unit $\iden_{\C}\lxr \me\circ\ml $ of the adjoint pair $(\ml,\me)$, the counit $\mq\circ\mi\lxr\iden_{\A}$ of the adjoint pair $(\mq,\mi)$ and the unit $\iden_{\A}\lxr\map\circ\mi$ of the adjoint pair $(\mi,\map)$ are natural isomorphisms.

\item The functor $\mi$ induces an equivalence between $\A$ and the Serre subcategory $\Ker \me = \Image \mi$ of $\B$. In the sequel we shall view this
equivalence as an identification.

 \item For any object $B$ in $\B$, there exist the following exact sequences$\colon$
\begin{equation}
\label{firstcanonicalexactseq}
\xymatrix{
0 \ar[r] & \Ker{\mu_B} \ar[r] & \ml\me(B) \ar[r]^{ \ \mu_B} & B \ar[r]^{\lambda_B \ \ \ } & \mi\mq(B) \ar[r] & 0
}
\end{equation}
\begin{equation}
\label{secondcanonicalexactseq}
\xymatrix{
0 \ar[r] & \mi\map(B) \ar[r]^{ \ \ \kappa_B} & B \ar[r]^{ \nu_B \ \ } & \mr\me(B) \ar[r]^{ } & \Coker{\nu_B} \ar[r] & 0
}
\end{equation}
where $\Ker{\mu_B}$ and $\Coker{\nu_B}$ belong to $\A$.

\item Since the functor $\me$ is exact, it has a fully faithful left adjoint and a fully faithful right adjoint.

\item By the previous claim, $\A$ is a localising and colocalising subcategory of $\B$ and there is an equivalence $\B/\A \simeq \C$. In particular any recollement $\textsf{R}_{\mathsf{ab}}(\A,\B,\C)$ induces a short exact sequence  of abelian categories $ 0 \lxr \A \lxr \B \lxr \C \lxr 0 $. For more details see \cite[subsection~2.1]{Psaroud:survey}.
\end{enumerate}
\end{rem}

\begin{rem}
\label{remadjointtriplerecol}
A recollement is determined by the adjoint triple $(\ml,\me,\mr)$ on the right
hand side, where $\me$ is exact and its left and its right adjoint both are
fully faithful.
Slightly more general, a recollement can be constructed from an adjoint triple
as follows (see \cite[Remark 2.5]{Psaroud:survey} for details):
\\
\begin{minipage}{0.3 \textwidth}
Let $\me\colon\B\lxr \C$ be an exact functor between abelian categories such that there is an adjoint triple as follows:
\end{minipage}
\begin{minipage}{0.7 \textwidth}
\begin{equation}
\label{adjointtriple}
\xymatrix@C=0.5cm{
\B \ar[rrr]^{\mathsf{e}}  &&& \C
\ar @/_1.5pc/[lll]_{\mathsf{l}} \ar
 @/^1.5pc/[lll]^{\mathsf{r}} }
\end{equation}
\end{minipage}
Assume that $\ml$ is fully faithful. The functor $\mq\colon \B\lxr \Ker{\me}$
is defined by $\mq(B)=\Coker{\mu_B}$ where $\mu\colon \ml\circ\me\lxr \iden_{\B}$ is the counit of the adjoint pair $(\ml,\me)$. Similarly, the functor $\map\colon \B\lxr \Ker{\me}$ is defined by $\map(B)=\Ker{\nu_B}$ where $\nu\colon\iden_{\B}\lxr \mr\circ\me$ is the unit of the adjoint pair $(\me,\mr)$.
Then $(\mq,\mi,\map)$ is an adjoint triple, where $\mi\colon \Ker \me \lxr \B$ is the inclusion functor. Hence,
\\
\begin{minipage}{0.5 \textwidth}
\[
\xymatrix@C=0.5cm{
\Ker\me \ar[rrr]^{\mathsf{i}} &&& \B \ar[rrr]^{\mathsf{e}}  \ar @/_1.5pc/[lll]_{\mathsf{q}}  \ar
 @/^1.5pc/[lll]^{\mathsf{p}} &&& \C
\ar @/_1.5pc/[lll]_{\mathsf{l}} \ar
 @/^1.5pc/[lll]^{\mathsf{r}}
 }
\]
\end{minipage}
\begin{minipage}{0.5 \textwidth}
is a recollement of abelian categories. In a similar way, the recollement
can be reconstructed under the assumption that $\mr$ is fully faithful.
\end{minipage}.

It remains to compare the recollement just constructed with the given one.
Let $\mathsf{R}_{\mathsf{ab}}(\A,\B,\C)$ be a recollement and $\mathsf{R}_{\mathsf{ab}}(\Ker{\me},\B,\C)$ the recollement just constructed from the adjoint triple on the right hand side. Then the two recollements $\mathsf{R}_{\mathsf{ab}}(\A,\B,\C)$ and $\mathsf{R}_{\mathsf{ab}}(\Ker{\me},\B,\C)$ are equivalent in the sense of Definition~\ref{equivrecs}.

So, up to equivalence the original recollement
$\mathsf{R}_{\mathsf{ab}}(\A,\B,\C)$ can be reconstructed from the adjoint
triple $(\ml,\me,\mr)$. Thus, there is an alternative way of defining a
recollement of abelian categories: Given abelian categories $\B$ and $\C$, a
recollement with middle term $\A$ is an adjoint triple
$(\ref{adjointtriple})$ such that the functor $\ml$ (or $\mr$) is fully
faithful.
\end{rem}

\subsection{\bf Ladders} \ A ladder of an abelian recollement in the sense
of Definition \ref{ladderab} yields further recollements in the following
way: Assume that there is a ladder as in \ref{ladderab}. Moreover, assume
that $\ml^0$ is fully faithful. Then $\ml^2$, $\ml^4$, $\ldots$ are fully
faithful and $\mr^0$, $\mr^2$, $\ldots$ are also fully faithful.

\begin{prop}
\label{proppropertiesladder}
Let $(\A,\B,\C)$ be a recollement of abelian categories.
\begin{enumerate}
\item  Assume that the recollement $(\A,\B,\C)$ admits a ladder of $\ml$-height $n$ where $n$ is an even positive number. Then there exist recollements of abelian categories
\[
\xymatrix@C=0.5cm{
\Ker{\ml^1} \ar[rrr]^{} &&& \B \ar[rrr]^{\ml^1}  \ar @/_1.5pc/[lll]_{}  \ar
 @/^1.5pc/[lll]^{} &&& \C
\ar @/_1.5pc/[lll]_{\mathsf{l}^2} \ar
 @/^1.5pc/[lll]^{\ml^0}
 }, \, \ldots \, , \xymatrix@C=0.5cm{
\Ker{\ml^{n-1}} \ar[rrr]^{} &&& \B \ar[rrr]^{\ml^{n-1}}  \ar @/_1.5pc/[lll]_{}  \ar
 @/^1.5pc/[lll]^{} &&& \C
\ar @/_1.5pc/[lll]_{\mathsf{l}^{n}} \ar
 @/^1.5pc/[lll]^{\ml^{n-2}}
 }
\]

\item  Assume that the recollement $(\A,\B,\C)$ admits a ladder of $\mr$-height $n$ where $n$ is an even positive number. Then there exist recollements of abelian categories
\[
\xymatrix@C=0.5cm{
\Ker{\mr^1} \ar[rrr]^{} &&& \B \ar[rrr]^{\mr^1}  \ar @/_1.5pc/[lll]_{}  \ar
 @/^1.5pc/[lll]^{} &&& \C
\ar @/_1.5pc/[lll]_{\mathsf{r}^0} \ar
 @/^1.5pc/[lll]^{\mr^2}
 }, \, \ldots \, , \xymatrix@C=0.5cm{
\Ker{\mr^{n-1}} \ar[rrr]^{} &&& \B \ar[rrr]^{\mr^{n-1}}  \ar @/_1.5pc/[lll]_{}  \ar
 @/^1.5pc/[lll]^{} &&& \C
\ar @/_1.5pc/[lll]_{\mathsf{r}^{n-2}} \ar
 @/^1.5pc/[lll]^{\mr^{n}}
 }
\]

\end{enumerate}
\end{prop}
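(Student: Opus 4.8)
The plan is to construct each listed recollement by producing, around each odd-indexed member of the ladder, an adjoint triple whose middle functor is exact and one of whose outer functors is fully faithful, and then invoking the reconstruction of a recollement from such data described in Remark~\ref{remadjointtriplerecol}. I will carry out (i) in detail; statement (ii) follows by the identical argument applied to the $\mr$-part of the ladder.

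First I would fix an odd index $j$ with $1\leq j\leq n-1$. Since the ladder has $\ml$-height $n$, the defining adjunctions $(\ml^{i+1},\ml^i)$ assemble into the chain
\[
\cdots \dashv \ml^{j+1}\dashv \ml^{j}\dashv \ml^{j-1}\dashv \cdots \dashv \ml^{0}\dashv \me,
\]
so that $\ml^{j}\colon \B\lxr \C$ (odd index) sits inside the adjoint triple $(\ml^{j+1},\ml^{j},\ml^{j-1})$ with $\ml^{j\pm 1}\colon \C\lxr \B$ (even indices). The functor $\ml^{j}$ is thus simultaneously a right adjoint (of $\ml^{j+1}$) and a left adjoint (of $\ml^{j-1}$); as a right adjoint it preserves limits and so is left exact, and as a left adjoint it preserves colimits and so is right exact. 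Hence $\ml^{j}$ is exact, supplying the exact middle functor required by Remark~\ref{remadjointtriplerecol}.

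Next I would check that the right adjoint $\ml^{j-1}$ is fully faithful. As $j$ is odd, $j-1$ is even, so this is precisely the propagation property recorded just before the proposition: since $\ml^{0}=\ml$ is fully faithful in the recollement, all of $\ml^{0},\ml^{2},\ml^{4},\dots$ are fully faithful. (This rests on the elementary fact that in any adjoint triple $F\dashv G\dashv H$ the two outer functors $F$ and $H$ are fully faithful simultaneously, applied to the triples $(\ml^{2k},\ml^{2k-1},\ml^{2k-2})$.) Therefore the adjoint triple $(\ml^{j+1},\ml^{j},\ml^{j-1})$ has exact middle term and fully faithful right-hand functor, so the dual form of Remark~\ref{remadjointtriplerecol} produces a recollement of abelian categories whose right-hand adjoint triple is $(\ml^{j+1},\ml^{j},\ml^{j-1})$ and whose left-hand term is $\Ker\ml^{j}$. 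This is exactly the recollement indexed by $j$ in the statement, and letting $j$ run through $1,3,\dots,n-1$ yields all the asserted recollements, proving (i). For (ii) one argues identically along the chain $\me\dashv \mr^{0}\dashv \mr^{1}\dashv\cdots$: for odd $j$ the relevant triple is $(\mr^{j-1},\mr^{j},\mr^{j+1})$, its middle $\mr^{j}$ is exact by the same two-sided-adjoint reasoning, and its left-hand functor $\mr^{j-1}$ (even index) is fully faithful because $\mr^{0}=\mr$ is, so the $\ml$-version of Remark~\ref{remadjointtriplerecol} applies and gives the recollement $(\Ker\mr^{j},\B,\C)$.

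The argument is essentially formal once the adjoint data are organized, so I do not anticipate any serious obstacle. The only points demanding care are bookkeeping ones: tracking the directions of the functors, correctly identifying for the odd-indexed middle which neighbour is its left adjoint and which its right adjoint, and ensuring that the fully faithful functor fed into the reconstruction sits on the side the relevant version of Remark~\ref{remadjointtriplerecol} requires (the right adjoint for the $\ml$-ladder, the left adjoint for the $\mr$-ladder).
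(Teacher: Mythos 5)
Your proof is correct and follows exactly the route the paper takes: the paper's own proof is a one-line appeal to Remark~\ref{remadjointtriplerecol}, and your argument simply spells out the details of that appeal (exactness of the odd-indexed $\ml^j$ as a two-sided adjoint, fully faithfulness of the even-indexed neighbours propagated from $\ml^0$, then the reconstruction of the recollement from the adjoint triple). No substantive difference.
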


\begin{proof}
Statements (i) and (ii) follow from Remark~\ref{remadjointtriplerecol}.
\end{proof}

\subsection{\bf The reason for asymmetry}
\label{subsection:asymmetry}
Definition~\ref{defnladder} may be unexpected due to its lack
  of symmetry. A symmetric definition analogous to that used for ladders
of triangulated categories
  (see for instance \cite[Section~3]{AKLY1}) appears much more natural, at
  least at first sight. We will see however that using such a definition
  would severely restrict the scope and range of the theory we are
  attempting to build and of the applications exemplified by the main
  theorems of this article. Symmetric ladders only can exist for comma
categories, which in the case of module categories means that the ring in
the middle of the recollement has to be triangular.
Moreover, a special case of a classification
  result by Feng and Zhang \cite{FZ} implies that for our purposes a symmetric
  definition of ladders has little potential to distinguish various situations
as it is needed in the
  characterisations we are aiming at.
  We first address the issue of range and
scope by showing that symmetric ladders occur only in special situations. 

Suppose there is a diagram of the following form, 
which we will call a
{\em symmetric ladder}: \\
\begin{minipage}{0.65\textwidth}
\[
\xymatrix@C=0.5cm{
 &&& \vdots \ \ \ \ \ \ \ \ \ \ \ \ \ \ \ \ \ \ \ \ \ \ \ \ \ \ \  \ \ \  \ \  \ \ \ \ \ \ \ \ \vdots &&& \\
 &&&  &&& \\
\A \ar[rrr]^{\mathsf{i}} \ar @/_3.0pc/[rrr]^{{\map^1}} \ar @/^3.0pc/[rrr]^{\mq^1}  &&& \B \ar @/_4.5pc/[lll]_{\mq^2}\ar[rrr]^{\mathsf{e}} \ar @/^3.0pc/[rrr]^{\mathsf{l}^1}   \ar @/_1.5pc/[lll]_{\mathsf{q}} \ar @/_3.0pc/[rrr]^{\mathsf{r}^1}  \ar
 @/^1.5pc/[lll]_{\mathsf{p}} \ar @/^4.5pc/[lll]_{\map^2} &&& \C \ar @/_4.5pc/[lll]_{\mathsf{l}^2} \ar @/^4.5pc/[lll]_{\mathsf{r}^2}
\ar @/_1.5pc/[lll]_{\mathsf{l}^0} \ar
 @/^1.5pc/[lll]_{\mathsf{r}^0} \\
 &&& &&& \\
 &&& \vdots \ \ \ \ \ \ \ \ \ \ \ \ \ \ \ \ \ \ \ \ \ \ \ \ \ \ \  \ \ \  \ \  \ \ \ \ \ \ \ \ \vdots &&& \\
 }
\]
\end{minipage}
\begin{minipage}{0.35 \textwidth}
In such a situation, the adjoint triple $(\mq^1, \mq, \mi)$ guarantees that the functor $\mq$ is exact, and similarly the adjoint triple $(\mi,\map,\map^1)$ shows that the functor $\map$ is exact. We will prove below: if there is a recollement situation $(\A,\B,\C)$ such that the functor $\mq$ or $\map$ is exact then the recollement $(\A,\B,\C)$ is equivalent to one given by a comma category. In case of module categories over rings, the latter statement says that the middle category is modules over a triangular matrix ring. Thus, such symmetric ladders
can occur in very special situations only.
\end{minipage}
\smallskip
\begin{minipage}{0.7 \textwidth}
We now recall what is a comma category. Let $G\colon \B\lxr \A$ be a right exact functor between abelian categories. The objects of the {\em comma category} $\C=(G \downarrow \iden)$ are triples $(A,B,f)$ where $f\colon G(B)\lxr A$ is a morphism in $\A$. A morphism $\gamma=(\alpha, \beta)\colon (A,B,f)\lxr (A',B',f')$ in $\C$ consists of two morphisms $\alpha\colon A\lxr A'$ in $\A$ and $\beta\colon B\lxr B'$ in $\B$ such that the following diagram is commutative$\colon$
\end{minipage}
\begin{minipage}{0.3 \textwidth}
\[
\xymatrix{
  G(B) \ar[d]_{G(\beta)} \ar[r]^{f} & A \ar[d]^{\alpha}     \\
  G(B')    \ar[r]^{f'} & A'                  }
\]
\end{minipage}
Since the functor $G$ is right exact, it follows from \cite{FGR} that the comma category $\C$ is abelian. We define the following functors$\colon$
\begin{enumerate}
\item The functor $\mt_{\B}\colon \B\lxr \C$ is defined by $\mt_{\B}(Y)=(G(Y),Y,\iden_{GY})$ on objects $Y$ in $\B$ and given a morphism $\beta\colon Y\lxr Y'$ in $\B$ then $\mt_{\B}(\beta)=(G(\beta),\beta)$ is a morphism in $\C$.

\item The functor $\mU_{\B}\colon \C\lxr \B$ is defined by $\mU_{\B}(A,B,f)=B$ on objects $(A,B,f)$ in $\C$ and given a morphism $(\alpha,\beta)\colon (A,B,f)\lxr (A',B',f')$ in $\C$ then $\mU_{\B}(\alpha,\beta)=\beta$ is a morphism in $\B$. Similarly, the functor $\mU_{\A}\colon \C\lxr \A$ is defined.

\item The functor $\mz_{\B}\colon \B\lxr \C$ is defined by $\mz_{\B}(Y)=(0,Y,0)$ on objects $Y$ in $\B$ and given a morphism $\beta\colon Y\lxr Y'$ in $\B$ then $\mz_{\B}(\beta)=(0,\beta)$ is a morphism in $\C$. Similarly the functor $\mz_{\A}\colon \A\lxr \C$ is defined.

\item The functor $\mq\colon \C\lxr \A$ is defined by $\mq(A,B,f)=\Coker{f}$ on objects $(A,B,f)$ in $\C$. A morphism ${(\alpha,\beta)\colon (A,B,f)\lxr
(A',B',f')}$ in $\C$ induces a morphism
${\mq(\alpha,\beta)\colon \Coker{f}\lxr \Coker{f'}}$.
\end{enumerate}
When $G$ has a right adjoint $G'\colon \A\lxr \B$, there are more functors. We denote by $\epsilon\colon GG'\lxr \iden_{\A}$ the counit and by $\eta\colon\iden_{\B}\lxr G'G$ the unit of the adjoint pair $(G,G')$.

\begin{enumerate}
\item The functor $\mh_{\A}\colon\A\lxr \C$ is defined by $\mh_{\A}(X)=(X,G'(X),\epsilon_X)$ on objects $X$ in $\A$  and given a morphism $\alpha\colon X\lxr X'$ in $\A$ then $\mh_{\A}(\alpha)=(\alpha,G'(\alpha))$ is a morphism in $\C$.

\item The functor $\map\colon \C\lxr \B$ is defined by $\map(A,B,f)=\Ker{(\eta_B\circ G'(f))}$ on objects $(A,B,f)$ in $\C$. A morphism $(\alpha,\beta)\colon (A,B,f)\lxr
(A',B',f')$ in $\C$ induces
$\map(\alpha,\beta)\colon {\Ker{(\eta_B\circ G'(f))}\lxr \Ker{(\eta_{B'}\circ G'(f'))}}$.
\end{enumerate}
It is easy to check, see also \cite{Psaroud:survey}, that the diagrams$\colon$
\begin{minipage}{0.5 \textwidth}
\begin{equation}
\label{firstrecolcommacat}
\xymatrix@C=0.5cm{
\A \ar[rrr]^{\mz_{\A}} &&& \C
\ar[rrr]^{\mU_{\B}} \ar @/_1.5pc/[lll]_{\mq} \ar
 @/^1.5pc/[lll]^{\mU_{\A}} &&& \B
\ar @/_1.5pc/[lll]_{\mt_{\B}} \ar
 @/^1.5pc/[lll]^{\mz_{\B}}
 }
\end{equation}
\end{minipage}
\begin{minipage}{0.5 \textwidth}
and
\[
\xymatrix@C=0.5cm{
\B \ar[rrr]^{\mz_{\B}} &&& \C
\ar[rrr]^{\mU_{\A}} \ar @/_1.5pc/[lll]_{\mU_{\B}} \ar
 @/^1.5pc/[lll]^{\map} &&& \A
\ar @/_1.5pc/[lll]_{\mz_{\A}} \ar
 @/^1.5pc/[lll]^{\mh_{\A}}
 }
\]
\end{minipage}
are recollements of abelian categories.

The following result is due to Franjou-Pirashvili
  \cite[Proposition~8.9]{Pira} who proved it using a characterisation of when
  a recollement of abelian categories is equivalent to the
  MacPherson--Vilonen recollement. We provide a direct proof using the
  recollement structure of a comma category together with a characterisation
  by Franjou-Pirashvili for a comparison functor between recollements to be
  an equivalence.

\begin{prop}
\label{proprecolcommacatFPresult}
Let $(\A,\B,\C)$ be a recollement of abelian categories. Assume that the functor $\map$ is exact and that $\B$ and $\C$ have enough projective objects.  Let $(\map\ml \downarrow \iden_{\C})$ be the comma category whose objects are triples of the form $(A,C,f)$ where $A\in \A$, $C\in \C$ and $f\colon \map \ml(C)\lxr A$ is a morphism in $\A$. Then the recollements of abelian categories $(\A,\B,\C)$ and $(\A, (\iden \downarrow G), \C)$ are equivalent.

In particular, if the recollement 
$(\A,\B,\C)$ admits a ladder of $\mr$-height at least two,
then the functor $\map$ is exact. If in addition $\B$ and $\C$ have enough 
projective objects, then $\A$ is equivalent to a comma category. \\
In particular, if $(\A,\B,\C)$ is a recollement of module categories, then 
the ring in the middle is triangular.
\begin{proof}
Since the functor $\map$ is exact, the composition $\map \ml$ is right exact and therefore the comma category $(\map \ml \downarrow \iden)$ is abelian \cite{FGR}. The objects are triples $(A,C,f)$ where $f\colon \map \ml(C)\lxr A$ is a morphism in $\A$. Then as in \textnormal{(\ref{firstrecolcommacat})}, there is a recollement $\mathsf{R}_{\mathsf{ab}}(\A, (\map \ml \downarrow \iden), \C)$. We claim that the recollements $\mathsf{R}_{\mathsf{ab}}(\A,\B,\C)$ and $\mathsf{R}_{\mathsf{ab}}(\A, (\map \ml \downarrow \iden), \C)$ are equivalent in the sense of Definition~\ref{equivrecs}. To show this, we define the functor $\F\colon \B\lxr (\map \ml \downarrow \iden)$ by $\F(B)=(\map(B), \me(B), \map \mu_B)$ on objects $B\in \B$, and if $b\colon B\lxr B'$ is a morphism in $\B$, then $\F(b)=(\map(b), \me(b))$ is a morphism in $(\map \ml \downarrow \iden)$. Then it follows immediately that $\F$ is a comparison functor, i.e.\  the following diagram commutes with all the structural functors of the recollements$\colon$
\begin{equation}\nonumber
\xymatrix@C=0.5cm{
\A \ar@{=}[ddd]_{} \ar[rrr]^{\mi} &&& \B \ar[ddd]_{\F} \ar[rrr]^{\me}  \ar @/_1.5pc/[lll]_{\mq}  \ar @/^1.5pc/[lll]^{\map} &&& \C \ar@{=}[ddd]
\ar @/_1.5pc/[lll]_{\ml} \ar
 @/^1.5pc/[lll]^{\mr}   \\
 &&& &&& \\
 &&& &&&  \\
\A  \ar[rrr]^{\mz_{\A}} &&& (\map\ml \downarrow \iden_{\C}) \ar[rrr]^{\mU_{\C}}  \ar @/_1.5pc/[lll]_{\mq'}  \ar
 @/^1.5pc/[lll]^{\mU_{\A}} &&& \C
\ar @/_1.5pc/[lll]_{\mt_{\C}} \ar
 @/^1.5pc/[lll]^{\mz_{\C}}
 }
\end{equation}
\begin{minipage}{0.6 \textwidth}
Note also that the functor $\F$ is exact since the functors $\map$ and $\me$ are exact. It remains now to show that $\F$ is an equivalence of categories. For this, it suffices to show that $\F$ is left admissible (see \cite[Theorem~7.2]{Pira}), i.e.\ the following diagram is commutative$\colon$
\end{minipage}
\begin{minipage}{0.4 \textwidth}
\begin{equation}
\label{leftadmissible}
\xymatrix{
\A \ar@{=}[d]_{} & \Ker\map \ar[l]_{\mL_1\mq} \ar[d]^{\F} \\
 (\map \ml \downarrow \iden)  & \Ker\mU_{\A} \ar[l]_{ \ \ \ \mL_1\mq'}  \\
}
\end{equation}
\end{minipage}

Let $B$ be an object of $\B$ such that $\map(B)=0$. Then clearly $\F(B)$ lies in
$\Ker\mU_{\A}$. Consider now a short exact sequence with $Q$ in $\Proj\C\colon$
\begin{equation}
\label{shortexactseqeb}
\xymatrix{
0 \ar[r]_{} & \Omega(\me(B)) \ar[r]^{ \ \ \ \beta} & Q \ar[r]^{\alpha \ \ \ } & \me(B) \ar[r] & 0 }
\end{equation}
Then there is a short exact sequence
\begin{equation}
\label{shortexactseqtcq}
\xymatrix{
0 \ar[r]_{} & \Ker{(0, \alpha)} \ar[r]^{} & \mt_{\C}(Q) \ar[r]^{(0, \alpha) \ \ } & Z_{\B}(\me(B)) \ar[r] & 0 }
\end{equation}
where $\mt_{\C}(Q)$ lies in $\Proj(\map\ml \downarrow \iden_{\C})$ and $\Ker{(0, \alpha)}=(\map\ml(Q), \Ker{\alpha}, \map\ml(\beta))$. Recall that the functor $\mq'$ sends a triple $(A,C,f)$ to the object $\Coker{f}$. Applying the functor $\mq'$ to $(\ref{shortexactseqtcq})$, we get that the object
$\mL_1\mq'\mz_{\B}(\me(B))$ is isomorphic to $\map\ml\me(B)$.

We now compute the first left derived functor $\mL_1\mq(B)$. Applying the exact functor $\map$ to $(\ref{firstcanonicalexactseq})$ and since $\map(B)=0$, it follows that $\mq(B)=0$ and therefore the counit map $\mu_B\colon \ml\me(B)\lxr B$ is an epimorphism. Then applying the functor $\ml$ to $(\ref{shortexactseqeb})$ yields the short exact sequence
\[
\xymatrix{
0 \ar[r]_{} & \Omega(B) \ar[r]^{} & \ml(Q) \ar[rr]^{\mu_B\circ \ml(\alpha)} && \me(B) \ar[r] & 0 }
\]
with $\ml(Q)$ projective. Applying the functor $\mq$ and using $\mq\ml=0$ (Remark~\ref{remdefrecol}) gives an isomorphism $\mL_1\mq(B)\cong \mq(\Omega(B))$. Consider the following exact commutative diagram$\colon$
\[
\xymatrix{
 & \map\ml\me(\Omega(B)) \ar[d] \ar[r]^{} & \map\ml(Q) \ar@{=}[d] \ar[rr]^{\map\ml(\alpha)} && \map\ml\me(B) \ar@{->>}[d]^{\map\mu_B} \ar[r]  & 0  \\
 0 \ar[r] & \map(\Omega(B)) \ar@{->>}[d] \ar[r]^{} & \map\ml(Q) \ar[rr]^{\map(\mu_B\circ\ml(\alpha))} && \map(B)\ar[r]  & 0  \\
 & \mq(\Omega(B))  &  &&   &  }
\]
Since $\map(B)=0$, the Snake Lemma implies that $\mq(\Omega(B))$ is isomorphic to $\map\ml\me(B)$. Hence, the diagram $(\ref{leftadmissible})$ is commutative and thus the functor $\F$ is an equivalence of categories.

Finally, if $(\A,\B,\C)$ is a recollement of module categories, then the above comma category is the module category of a triangular matrix ring, see \cite{ARS, FGR}.
\end{proof}
\end{prop}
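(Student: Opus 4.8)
The plan is to exhibit the middle category $\B$ as the comma category $(\map\ml \downarrow \iden_{\C})$ and to identify the given recollement with the standard comma-category recollement $(\ref{firstrecolcommacat})$. Since $\map$ is exact and $\ml$, being a left adjoint, is right exact, the composite $\map\ml\colon \C \lxr \A$ is right exact; hence by \cite{FGR} the category $(\map\ml \downarrow \iden_{\C})$ is abelian and carries a recollement $\mathsf{R}_{\mathsf{ab}}(\A, (\map\ml \downarrow \iden_{\C}), \C)$ built as in $(\ref{firstrecolcommacat})$, with projection $\mU_{\C}$, fibre inclusion $\mz_{\C}$, coreflection $\mq' = \Coker(-)$, section $\mt_{\C}$, and right adjoint $\mU_{\A}$ of $\mz_{\A}$. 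I would then write down the only reasonable comparison functor $\F\colon \B \lxr (\map\ml \downarrow \iden_{\C})$, namely $\F(B) = (\map(B), \me(B), \map\mu_B)$ on objects, where $\mu_B\colon \ml\me(B) \lxr B$ is the counit. A routine check shows $\F$ intertwines all six structural functors of the two recollements and that $\F$ is exact, because $\map$ and $\me$ are.

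The next step is to invoke the Franjou--Pirashvili criterion \cite[Theorem~7.2]{Pira}: a comparison functor between recollements with identical outer terms is an equivalence provided it is left admissible, i.e.\ the square $(\ref{leftadmissible})$ comparing the first left derived functors $\mL_1\mq$ and $\mL_1\mq'$ on $\Ker\map$ and $\Ker\mU_{\A}$ commutes. Everything then reduces to producing, for each $B$ with $\map(B) = 0$, a natural isomorphism $\mL_1\mq(B) \cong \mL_1\mq'\F(B)$; I would establish this by computing both sides and showing each is naturally isomorphic to $\map\ml\me(B)$.

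The two computations run in parallel and use the enough-projectives hypotheses. On the comma side, I would choose a projective presentation $0 \lxr \Omega(\me B) \lxr Q \lxr \me B \lxr 0$ in $\C$, note that $\mt_{\C}(Q)$ is projective in $(\map\ml \downarrow \iden_{\C})$, and apply the right-exact functor $\mq' = \Coker(-)$ to the induced presentation of $\F(B) = \mz_{\C}(\me B)$ to read off $\mL_1\mq'\F(B) \cong \map\ml\me(B)$. On the original side, exactness of $\map$ applied to $(\ref{firstcanonicalexactseq})$, together with $\map(B) = 0$, forces $\mq(B) = 0$ and makes $\mu_B$ an epimorphism; since $\ml$ is the left adjoint of the exact functor $\me$ it preserves projectives, so lifting the presentation of $\me B$ through $\ml$ yields a short exact sequence $0 \lxr \Omega(B) \lxr \ml(Q) \lxr B \lxr 0$ with $\ml(Q)$ projective, and $\mq\ml = 0$ (Remark~\ref{remdefrecol}) gives $\mL_1\mq(B) \cong \mq(\Omega B)$. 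A Snake Lemma chase in the resulting three-row diagram, using $\map(B) = 0$, then identifies $\mq(\Omega B)$ with $\map\ml\me(B)$. I expect this chase to be the main obstacle: the delicate point is to arrange the two presentations compatibly and to track the comparison maps so that $\mL_1\mq$ and $\mL_1\mq'\comp\F$ are identified \emph{naturally}, not merely object-wise. Once $(\ref{leftadmissible})$ commutes, $\F$ is an equivalence and the two recollements are equivalent in the sense of Definition~\ref{equivrecs}.

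For the concluding assertions I would argue as follows. A ladder of $\mr$-height at least two endows $\mr$ with a right adjoint; since $\mr$ is already the right adjoint of $\me$ it is left exact, and having a further right adjoint it is also right exact, hence exact. By the structural analysis of ladders it then follows that the reflection $\map$ is exact as well, equivalently that one obtains the adjoint triple $(\mi, \map, \map^1)$ of Subsection~\ref{subsection:asymmetry}; this is exactly the hypothesis required above. Granting in addition enough projectives in $\B$ and $\C$, the equivalence just established identifies the middle term $\B$ with the comma category $(\map\ml \downarrow \iden_{\C})$. Finally, specialising to module categories, such a comma category is by \cite{ARS, FGR} precisely the module category of a triangular matrix ring, so the ring in the middle of the recollement is triangular.
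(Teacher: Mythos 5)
Your main argument is correct and follows the paper's own proof essentially step for step: the same comparison functor $\F(B)=(\map(B),\me(B),\map\mu_B)$ into $(\map\ml\downarrow\iden_{\C})$, the same appeal to the Franjou--Pirashvili left-admissibility criterion, and the same two computations identifying both $\mL_1\mq(B)$ and $\mL_1\mq'\F(B)$ with $\map\ml\me(B)$ via a projective presentation of $\me(B)$, the observation that $\mu_B$ is an epimorphism when $\map(B)=0$, the vanishing $\mq\ml=0$, and the Snake Lemma. (Your short exact sequence $0\lxr\Omega(B)\lxr\ml(Q)\lxr B\lxr 0$ is in fact the correct form; the paper's displayed version of this sequence contains a typo in its rightmost term.)

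The one genuine gap is in your final paragraph. From a ladder of $\mr$-height at least two you correctly deduce that $\mr$ has a right adjoint and hence is exact, but the step ``by the structural analysis of ladders it then follows that $\map$ is exact, equivalently one obtains the adjoint triple $(\mi,\map,\map^1)$'' is not justified: the asymmetric ladder of Definition~\ref{defnladder} only extends the right-hand adjoint triple $(\ml,\me,\mr)$ and produces no adjoint $\map^1$ of $\map$, and exactness of a functor is not equivalent to its having a right adjoint. Indeed the implication ``$\mr$ exact $\Rightarrow$ $\map$ exact'' fails in general: the recollement $(\A,\Mor_2(\A),\A)$ of Example~\ref{extension} has $\mr$-height four, yet $\map(X_1\xrightarrow{f}X_2)=\Ker f$ is not right exact (apply it to $0\lxr(0\to k)\lxr(k\xrightarrow{\iden}k)\lxr(k\to 0)\lxr 0$). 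The clause only becomes available in the \emph{symmetric} setting of subsection~\ref{subsection:asymmetry}, where extending the recollement downwards also supplies a right adjoint $\map^1$ of $\map$ and thereby forces $\map$ to be exact; your proof should either invoke that symmetric hypothesis explicitly or assume the exactness of $\map$ directly, as the first paragraph of the statement does.
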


The following result is dual and has a similar proof.

\begin{prop}
\label{proprecolcommacatFPresultdual}
Let $(\A,\B,\C)$ be a recollement of abelian categories. Assume that the functor $\mq$ is exact and that $\B$ and $\C$ have enough injective objects. Let $(\iden_{\C} \downarrow \mq\mr)$ be the comma category whose objects are triples of the form $(A,C,f)$ where $A\in \A$, $C\in \C$ and $f\colon A\lxr \mq\mr(C)$ is a morphism in $\A$. Then the recollements of abelian categories $(\A,\B,\C)$ and $(\A, (\iden_{\C} \downarrow \mq\mr), \C)$ are equivalent.

In particular, if the recollement 
$(\A,\B,\C)$ admits a ladder of $\ml$-height at least two,
then the functor $\mq$ is exact. If in addition $\B$ and $\C$ have enough 
injective objects, then $\A$ is equivalent to a comma category. \\
In particular, if $(\A,\B,\C)$ is a recollement of module categories, then 
the ring in the middle is triangular.
\end{prop}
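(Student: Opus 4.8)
The plan is to deduce the statement from Proposition~\ref{proprecolcommacatFPresult} by passing to opposite categories, treating the module-theoretic clause separately. First I would record how a recollement behaves under the duality sending an abelian category to its opposite. Applying $(-)^{\op}$ to $(\A,\B,\C)$ yields a diagram $(\A^{\op},\B^{\op},\C^{\op})$ that is again a recollement with quotient functor $\me^{\op}$: indeed $\Ker\me^{\op}=\Ker\me=\Image\mi=\Image\mi^{\op}$, and all adjunctions reverse, so the adjoint triple $(\mq,\mi,\map)$ becomes $(\map^{\op},\mi^{\op},\mq^{\op})$ and $(\ml,\me,\mr)$ becomes $(\mr^{\op},\me^{\op},\ml^{\op})$. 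In particular $\mq^{\op}$ is the right adjoint of $\mi^{\op}$ and $\mr^{\op}$ is the left adjoint of $\me^{\op}$; thus $\mq^{\op}$ and $\mr^{\op}$ play in the opposite recollement exactly the roles played by $\map$ and $\ml$ in Proposition~\ref{proprecolcommacatFPresult}.

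Next I would transport the hypotheses. Since exactness is self-dual, $\mq$ is exact if and only if $\mq^{\op}$ is, and $\B,\C$ have enough injectives if and only if $\B^{\op},\C^{\op}$ have enough projectives; so the assumptions of Proposition~\ref{proprecolcommacatFPresult} hold for $(\A^{\op},\B^{\op},\C^{\op})$. In that proposition the relevant comma category is built from the composite ``$\map\ml$'', which here is $\mq^{\op}\mr^{\op}=(\mq\mr)^{\op}$; this is right exact precisely because $\mq\mr$ is left exact (a composite of the left exact functors $\mr$ and $\mq$), so the comma category is abelian by the dual of \cite{FGR}. The key bookkeeping point is the identification of this comma category: a triple $(A,C,f)$ with $f\colon A\lxr\mq\mr(C)$ in $\A$ is sent by $(-)^{\op}$ to the triple with reversed morphism $f^{\op}\colon(\mq\mr)^{\op}(C)\lxr A$ in $\A^{\op}$, so that $(\iden_{\C}\downarrow\mq\mr)^{\op}=\big((\mq\mr)^{\op}\downarrow\iden\big)$. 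Proposition~\ref{proprecolcommacatFPresult} then gives an equivalence of recollements between $(\A^{\op},\B^{\op},\C^{\op})$ and $\big(\A^{\op},((\mq\mr)^{\op}\downarrow\iden),\C^{\op}\big)$.

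Applying $(-)^{\op}$ once more finishes the main assertion: the notion of equivalence of recollements in Definition~\ref{equivrecs} is expressed through the quotient functor and hence is itself self-dual, so taking opposites returns an equivalence between $(\A,\B,\C)$ and $(\A,(\iden_{\C}\downarrow\mq\mr),\C)$. For the first ``in particular'' clause, I would observe that $(-)^{\op}$ converts the $\ml$-ladder of $(\A,\B,\C)$ into an $\mr$-ladder of the opposite of equal height: each left adjoint $\ml^{i+1}$ of $\ml^{i}$ becomes a right adjoint $(\ml^{i+1})^{\op}$ of $(\ml^{i})^{\op}$, and $(\ml^{0})^{\op}=\ml^{\op}$ is the right adjoint of $\me^{\op}$, the base of that $\mr$-ladder. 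Hence $\ml$-height at least two for $(\A,\B,\C)$ means $\mr$-height at least two for $(\A^{\op},\B^{\op},\C^{\op})$, and the corresponding clause of Proposition~\ref{proprecolcommacatFPresult} forces $\mq^{\op}$, and therefore $\mq$, to be exact.

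Finally I would prove the module-categorical clause directly rather than through $(-)^{\op}$, since $(\Mod R)^{\op}$ need not be a module category. For a recollement of module categories one has $\A=\Mod S$ and $\C=\Mod T$, with enough injectives and projectives automatically; the left exact functor $\mq\mr\colon\C\lxr\A$ is then given by a bimodule, and the comma category $(\iden_{\C}\downarrow\mq\mr)$ is the module category of a triangular matrix ring by \cite{ARS, FGR}, exactly as in the non-dual case. Combined with the equivalence already established, this shows the middle ring is triangular. I expect the only genuine difficulty to be this dictionary of dualisation: one must reverse every adjunction, every instance of full faithfulness, and in particular the direction of the structural comma-category morphism consistently, so that the term appearing in the opposite recollement is genuinely $\big((\mq\mr)^{\op}\downarrow\iden\big)$; once this is pinned down, all three assertions follow from Proposition~\ref{proprecolcommacatFPresult}. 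Alternatively one could bypass opposites entirely and mirror the proof of that proposition, replacing the counit $\mu$, cokernels, syzygies and $\mL_1\mq$ by the unit $\nu$, kernels, cosyzygies and $\mR^1\map$, dualising the Snake Lemma step accordingly.
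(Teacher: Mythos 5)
Your proposal is correct and matches the paper's intent exactly: the paper's entire proof of this proposition is the remark that it is ``dual and has a similar proof'' to Proposition~\ref{proprecolcommacatFPresult}, and your passage to opposite categories (reversing the adjoint triples to $(\map^{\op},\mi^{\op},\mq^{\op})$ and $(\mr^{\op},\me^{\op},\ml^{\op})$, and identifying $(\iden_{\C}\downarrow\mq\mr)^{\op}$ with $\big((\mq\mr)^{\op}\downarrow\iden\big)$) is a clean way of executing that duality. Your separate, direct treatment of the module-category clause is a sensible precaution, since $(\Mod R)^{\op}$ is not a module category, and it lands on the same citation of \cite{ARS, FGR} that the paper uses.
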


\begin{rem}
Propositions~\ref{proprecolcommacatFPresult}
and~\ref{proprecolcommacatFPresultdual} show that non-trivial symmetric
ladders of module categories (or more general abelian categories)
 only can exist in the case of comma
categories or triangular matrix algebras. In fact, when the symmetric ladder
extends the given recollement downwards, the functors $\mathsf{p}$ and
$\mathsf{r}$ must be exact and Proposition~\ref{proprecolcommacatFPresult}
becomes applicable. When the symmetric ladder extends the given recollement
upwards, the functors $\mathsf{q}$ and $\mathsf{l}$ must be exact and
Proposition~\ref{proprecolcommacatFPresultdual} becomes applicable.

Another limitation of the concept of symmetric ladders is implied by work of
Feng and Zhang \cite{FZ}. Starting with a Serre subcategory of a
Grothendieck category and the corresponding exact sequence of abelian
categories, they have given a full classification of all symmetric
partial or full recollements or ladders. This classification gives just
seven cases, three of which are partial recollements. The fourth case is
recollements that cannot be extended to non-trivial ladders. When non-trivial
symmetric ladders exist, Feng and Zhang's classification states that there
are only three cases: Upwards extension by one step or downwards extension by
one step or ladders that are infinite both upwards and downwards.

Hence, using symmetric ladders severely restricts the scope and range of the
theory by limiting it to comma categories or triangular matrix rings, and in
addition by allowing for only three kinds of non-trivial ladders, which is
much less flexibility than we need for homological characterisations such
as in the main results of this article.
\end{rem}

\section{More examples and some ladders in action}

For various classes of rings, ladders of recollements are constructed and
ladders (and their heights) will be connected to ring theoretical or module
theoretical properties. Ladders determine such properties and the existence
of ladders depends on properties of certain modules. In the last
subsection, the height of a ladder is characterised in terms of certain
modules being projective or not (Theorem \ref{charheight}).

\subsection{\bf Morita context rings} Any ring with a decomposition of
the unit into a sum of two orthogonal idempotents can be written as a
Morita context ring.

\begin{exam}
\label{tofDelta}
Let $R$ be a ring and consider the Morita context ring
$\Delta_{(0,0)}=\bigl(\begin{smallmatrix}
R & R \\
R & R
\end{smallmatrix}\bigr)$ (see \cite{GP, GaoPsaroudakis}).
Its modules are tuples of the form
\[
(X,Y,f,g)\colon \xymatrix@C=0.5cm{
X \ar@<-.7ex>[rr]_{f} && Y \ar@<-.7ex>[ll]_-{g}}
\]
where $X$, $Y$ are in $\Mod{R}$ and $g\circ f=0=f\circ g$. A morphism between two tuples $(X,Y,f,g)$ and $(X',Y',f',g')$ is a pair of $R$-homomorphisms $(a,b)$ such that $b\circ f=f'\circ a$ and $a\circ g= g'\circ b$. By \cite[Proposition~4.4]{GaoPsaroudakis2}, the module category $\Mod\Delta_{(0,0)}$ admits a recollement of module categories with an infinite ladder (of period three).

This algebra is the preprojective algebra of Dynkin type $\mathbb{A}_2$. By
\cite[Proposition~4.4]{GaoPsaroudakis2}, there are infinite ladders (of
period three) for all preprojective algebras of Dynkin type $\mathbb{A}_n$ and
more generally for the preprojective algebras of Dynkin species
$\mathbb{A}_n$.
\end{exam}

\subsection{\bf Homological embeddings}
An exact functor $\mi\colon\A\lxr \B$ between abelian categories is called a
{\em homological embedding} (see \cite{Psaroud:homolrecol}), if the map ${\mi_{X,Y}^n\colon\Ext_{\A}^n(X,Y)\lxr \Ext_{\B}^n(\mi(X),\mi(Y))}$ is an isomorphism of abelian groups for all $X, Y$ in $\A$ and for all $n\geq 0$.
A recollement $(\A, \B, \C)$ of abelian categories is called a {\em
homological recollement}, if $\mi$ is a homological embedding.
\\ 
Let $\Lambda$ be an associative ring and $I$ a two-sided ideal of $\Lambda$.
We are going to construct a family of homological recollements, depending
on a natural number $n \geq 2$, which we fix from now on. Define an
 $n\times n$ matrix ring $\Gamma$ and idempotents $f,g \in \Gamma$ 

\begin{minipage}{0.3 \textwidth}
\[ 
\Gamma=
         \begin{pmatrix}
           \Lambda & I & I & \cdots & I & I\\
           \Lambda &  \Lambda & I & \cdots  & I & I\\
           \Lambda & \Lambda &  \Lambda  & \cdots  & I & I\\
 \vdots & \vdots  & \vdots & \ddots & \vdots & \vdots \\
           \Lambda & \Lambda & \Lambda  & \cdots  & \Lambda & I\\
           \Lambda & \Lambda & \Lambda  & \cdots  & \Lambda & \Lambda\\
           \end{pmatrix}
 \]
\end{minipage}
\begin{minipage}{0.1 \textwidth}
and
\end{minipage}
\begin{minipage}{0.5 \textwidth}
 \[
 f=\begin{pmatrix}
 0 & 0 & \cdots & 0\\
 0 & 0 & \cdots  & 0\\
 \vdots & \vdots   & \ddots & \vdots  \\
  0 & 0 & \cdots  & 1\\
\end{pmatrix} \ \ \ \text{and}  \ \ \ g=\begin{pmatrix}
 1 & 0 & \cdots & 0\\
 0 & 0 & \cdots  & 0\\
 \vdots & \vdots   & \ddots & \vdots  \\
  0 & 0 & \cdots  & 0\\
\end{pmatrix}
\]
\end{minipage}

\begin{minipage}{0.35\textwidth}
and an $(n-1)\times (n-1)$ matrix ring
\end{minipage}
\begin{minipage}{0.35\textwidth}
\[
\Sigma=\begin{pmatrix}
          \Lambda/I & 0 & \cdots & 0 \\
           \Lambda/I & \Lambda/I & \cdots & 0 \\
           \vdots & \vdots & \ddots  & \vdots \\
           \Lambda/I & \Lambda/I & \cdots & \Lambda/I
         \end{pmatrix}
 \]
\end{minipage}

There are isomorphisms $\Sigma=\Gamma/\Gamma
f\Gamma$ and $\Lambda=f\Gamma f$. Then there is a homological recollement of
module categories $(\Mod{\Sigma},\Mod{\Gamma},\Mod{\Lambda})$, which as we
will see has $r$-height at least three and $l$-height at least one
\begin{equation}
\label{recolmatrixalg}
\xymatrix@C=0.5cm{
 \Mod\Sigma \ar[rrr]^{\mi} &&& \Mod\Gamma  \ar[rrr]^{f\Gamma\otimes_{\Gamma}-} \ar @/_1.5pc/[lll]_{\mq}  \ar @/^1.5pc/[lll]^{\map} \ar @/_3.0pc/[rrr]^{\mr^1}  &&& \Mod\Lambda \ar @/^4.5pc/[lll]_{\mr^2}
 \ar @/^1.5pc/[lll]_{\mr^0} \ar @/_1.5pc/[lll]_{\ml^0}
 } 
\end{equation}
where
\[
\left\{
  \begin{array}{lll}
   \me=f\Gamma\otimes_{\Gamma}- \cong f(-) & \hbox{} \\
   \mr^0=\Hom_{f\Gamma f}(f\Gamma,-)\cong \Gamma g\otimes_\Lambda-  & \hbox{} \\
   \mr^1=\Hom_{\Gamma}(\Gamma g,-)\iso g\Gamma\otimes_{\Gamma}-\iso g(-)  & \hbox{} \\
   \mr^2=\Hom_{\Lambda}(g\Gamma,-)  & \hbox{}
  \end{array}
\right.
\]

The values of the $l$-height and the $r$-height depend on properties of
the ideal $I$:

\begin{prop} \label{ideal}
Let $\Lambda$, $\Gamma$ and $\Sigma$ as above. The following hold.
\begin{enumerate}
\item The recollement $(\ref{recolmatrixalg})$ is homological and it has $l$-height at least one and
$r$-height at least three.

\item If $I$ is not projective as both a left and a right $\Lambda\mbox{-}$module, then the recollement $(\ref{recolmatrixalg})$ has exactly $l$-height one and $r$-height
three.

\item If $_{\Lambda}I$ is projective, then the recollement $(\ref{recolmatrixalg})$ has $l$-height at least one and $r$-height at least four.

\item If $I_{\Lambda}$ is projective, then the recollement $(\ref{recolmatrixalg})$ has $l$-height at least two and $r$-height at least three.

\item \label{idealderived}
The recollement $(\ref{recolmatrixalg})$ induces a recollement of derived module
categories which admits a ladder of height at least four if and only
 $_{\Lambda}I$ has finite projective dimension.
\end{enumerate}
\end{prop}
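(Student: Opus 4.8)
The plan is to reduce all four assertions to the one-sided structure of $I$ as a $\Lambda$-module, and, for the derived statement, to the compactness of the row $g\Gamma$ in $\mathsf{D}(\Mod\Lambda)$. First, for the homological assertion in (i), I would compute the trace ideal $\Gamma f\Gamma$ explicitly: its bottom row equals $(\Lambda,\dots,\Lambda)$ and every other row equals $(I,\dots,I)$, so that $\Gamma/\Gamma f\Gamma\cong\Sigma$ as claimed. To see that $\Gamma f\Gamma$ is a stratifying ideal, hence that $\mi$ is a homological embedding, I would record the one-sided structures of the four bimodules in play: the row $f\Gamma$ and the column $\Gamma g$ are free, $f\Gamma\cong{}_\Lambda\Lambda^{\,n}$ and $\Gamma g\cong\Lambda^{\,n}_\Lambda$, whereas $\Gamma f\cong I^{\,n-1}\oplus\Lambda$ as a right $\Lambda$-module and $g\Gamma\cong\Lambda\oplus I^{\,n-1}$ as a left $\Lambda$-module. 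Freeness of ${}_\Lambda(f\Gamma)$ forces $\Tor^\Lambda_i(\Gamma f,f\Gamma)=0$ for $i>0$, and a direct inspection shows that the multiplication map $\Gamma f\otimes_\Lambda f\Gamma\to\Gamma f\Gamma$ is an isomorphism; these are exactly the two conditions making $\Gamma f\Gamma$ stratifying. That the $l$-height is at least one and the $r$-height at least three is then immediate from the functors $\ml^0,\mr^0,\mr^1,\mr^2$ exhibited before the statement.

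The exact heights in (ii)--(iv) I would deduce from a single adjunction criterion: for a bimodule ${}_\Gamma M_\Lambda$ the tensor functor $M\otimes_\Lambda-$ admits a left adjoint iff $M$ is finitely generated projective as a right $\Lambda$-module, and dually $\Hom_\Lambda(N,-)$ admits a right adjoint iff ${}_\Lambda N$ is finitely generated projective. Applied to $\ml^0=\Gamma f\otimes_\Lambda-$, this shows that $\ml^1$ exists iff $\Gamma f\cong I^{\,n-1}\oplus\Lambda$ is projective over $\Lambda$ on the right, i.e.\ iff $I_\Lambda$ is projective (here $n\geq 2$ guarantees a genuine copy of $I$); this yields $l$-height at least two in (iv) and exactly one when $I_\Lambda$ is not projective. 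Dually, applied to $\mr^2=\Hom_\Lambda(g\Gamma,-)$ with ${}_\Lambda(g\Gamma)\cong\Lambda\oplus I^{\,n-1}$, it shows that the fourth right rung $\mr^3$ exists iff ${}_\Lambda I$ is projective, giving $r$-height at least four in (iii) and exactly three otherwise. Assertion (ii) is then the conjunction of these two ``exactly'' conclusions.

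For the derived statement (v), since the recollement is homological by (i), it induces a recollement of derived categories whose right-hand adjoints are the total derived functors of $\ml^0,\me,\mr^0,\mr^1,\mr^2$. Because $f\Gamma$ and $\Gamma g$ are $\Lambda$-free and $\Gamma g$ is $\Gamma$-projective, the functors $\mr^0$ and $\mr^1$ are already exact and pass unchanged to $\mathsf{D}(\Mod\Gamma)\rightleftarrows\mathsf{D}(\Mod\Lambda)$, while $\mr^2$ is replaced by $\mathbb{R}\mr^2=\mathbb{R}\Hom_\Lambda(g\Gamma,-)$; this produces the three right rungs of the derived ladder. The whole question is then whether this last functor admits a further right adjoint, and by Brown representability $\mathbb{R}\Hom_\Lambda(g\Gamma,-)$ has a right adjoint iff it preserves coproducts iff $g\Gamma$ is a compact, i.e.\ perfect, object of $\mathsf{D}(\Mod\Lambda)$. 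As ${}_\Lambda(g\Gamma)\cong\Lambda\oplus I^{\,n-1}$ and $\Lambda$ is perfect, this holds precisely when ${}_\Lambda I$ is perfect, that is, when $\pd{}_\Lambda I<\infty$. This is exactly the fourth rung needed to raise the height of the derived ladder to four, and is the derived refinement of (iii), ``projective'' being replaced by ``finite projective dimension.''

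The main obstacle I expect is precisely this last equivalence in (v). One must argue cleanly that the passage to the derived category neither creates nor destroys rungs except through the single functor $\mathbb{R}\mr^2$, so that the jump of the derived $r$-height from three to four is controlled solely by the compactness of $g\Gamma$; and one must be careful that ``perfect'' is the correct derived analogue of ``finitely generated projective,'' so that finite projective dimension of ${}_\Lambda I$ (with $I$ finitely generated, so that finite projective dimension indeed yields a bounded complex of finitely generated projectives) is exactly the condition that produces the required rung.
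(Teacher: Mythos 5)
Your argument is correct and follows essentially the same route as the paper's: the homological claim via the stratifying-ideal conditions for $\Gamma f\Gamma$, the heights via the one-sided structures $\Gamma f\cong I^{\,n-1}\oplus\Lambda$ and ${}_{\Lambda}(g\Gamma)\cong\Lambda\oplus I^{\,n-1}$ combined with the standard adjunction criteria (which the paper packages through Watts' theorem when identifying $\mr^0$ with $\Gamma g\otimes_{\Lambda}-$), and part (v) via compactness of $g\Gamma$ in $\mathsf{D}(\Mod\Lambda)$. Your explicit appeal to Brown representability and the finite-generation caveat in (v) only make precise steps the paper leaves implicit; the underlying mechanism is identical.
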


\begin{proof}
The fact that $(\ref{recolmatrixalg})$ is homological can be checked directly by using that $f\Gamma$ is projective and since $\Gamma f\otimes_{f\Gamma f} f\Gamma\cong \Gamma f\Gamma$ (i.e.\ $\Gamma f\Gamma$ is a stratifying ideal), see also \cite{GaoKoenigPsaroudakis} for a more detailed proof. For the ladder the key point is the description of $\mr^0$: The functor $\mr^0$ is exact since the left $\Lambda$-module $f\Gamma=(\begin{smallmatrix}
\Lambda & \Lambda & \cdots & \Lambda
\end{smallmatrix})$ is projective. Also, the functor $\mr^0$ preserves coproducts since $f\Gamma$ is finitely generated. Then, by Watts' Theorem, the functor $\mr^0$ is naturally isomorphic to $_\Gamma\Hom_{\Lambda}(f\Gamma, \Lambda)\otimes_\Lambda-$. Moreover, $\Hom_{\Lambda}(f\Gamma, \Lambda)$ is isomorphic to $\Gamma g$ as $\Gamma$-$\Lambda$-bimodules. This completes the description of $\mr^0$. Thus, the functor $\mr^0$ becomes the left adjoint of the standard adjoint triple induced by the idempotent $g$ and there is  a ladder of $\mr$-height at least three.

Next we have to ask if $\mr^2$ admits a right adjoint so that $(\Mod{\Sigma},\Mod{\Gamma},\Mod{\Lambda})$ has $\mr$-height at least four. We compute that $g\Gamma = (\begin{smallmatrix}
\Lambda & I & \cdots & I
\end{smallmatrix})$ and therefore $\mr^2$ admits a right adjoint if and only if $_\Lambda I$ is projective. Similarly, the functor $\ml^0=\Gamma f\otimes_\Lambda-$ has a left adjoint if and only if $I_\Lambda$ is projective. In this case, the recollement $(\Mod{\Sigma},\Mod{\Gamma},\Mod{\Lambda})$ has $\ml$-height at least two.

The recollement $(\dag)$  induces a ladder of derived module categories (\!\!\cite{CPS:memoirs}, see also \cite[Theorem~8.3]{Psaroud:survey}) of
height at least three:
\[
\xymatrix@C=0.5cm{
\mathsf{D}(\Mod\Sigma) \ar[rrr]^{\mi} &&& \mathsf{D}(\Mod\Gamma)  \ar[rrr]^{f\Gamma\otimes_{\Gamma}-} \ar @/_1.5pc/[lll]_{}  \ar @/^1.5pc/[lll]^{} \ar @/_3.0pc/[rrr]^{\mr^1}  &&& \mathsf{D}(\Mod\Lambda) \ar @/^4.5pc/[lll]_{\mathbb{R}\mr^2}
 \ar @/^1.5pc/[lll]_{\mr^0} \ar @/_1.5pc/[lll]_{\mathbb{L}\ml^0}
 }
\]
Note that the adjoints on the right side of the recollement induce adjoints on the left side, so we get a ladder of $\mr$-height three (going downwards).

We infer that $_{\Lambda}I$ has finite projective dimension if and only if  there exists a bounded complex $P^{\bullet}$ of projective left $\Lambda\mbox{-}$modules such that the functor $\mathbb{R}\mr^2\cong \Hom_{\mathsf{D}(\Mod\Lambda)}(P^{\bullet}, -)$ if and only if $\mathbb{R}\mr^2$ admits a right adjoint.
\end{proof}

\begin{exam}
\label{nonsym}
Let $\Lambda$ be an Artin algebra and $\Lambda e\Lambda$  a stratifying ideal of $\Lambda$. Recall from \cite{CPS:memoirs}, see also \cite{AKLY2}, that $\Lambda e\Lambda$ is called stratifying if the surjective homomorphism $\Lambda\lxr \Lambda/\Lambda e\Lambda$ is homological \cite{GeigleLenzing}, i.e.\ the canonical functor $\Mod\Lambda/\Lambda e\Lambda\lxr \Mod\Lambda$ is a homological embedding.   
Let\\
\begin{minipage}{0.35\textwidth}
\[
\Gamma=
         \begin{pmatrix}
           \Lambda & \Lambda e\Lambda & \Lambda e\Lambda & \cdots & \Lambda e\Lambda & \Lambda e\Lambda\\
           \Lambda &  \Lambda & \Lambda e\Lambda & \cdots  & \Lambda e\Lambda & \Lambda e\Lambda\\
           \Lambda & \Lambda &  \Lambda  & \cdots  & \Lambda e\Lambda & \Lambda e\Lambda\\
 \vdots & \vdots  & \vdots & \ddots & \vdots & \vdots \\
           \Lambda & \Lambda & \Lambda  & \cdots  & \Lambda & \Lambda e\Lambda\\
           \Lambda & \Lambda & \Lambda  & \cdots  & \Lambda & \Lambda\\
           \end{pmatrix}
 \]
\end{minipage}
\begin{minipage}{0.1\textwidth}
\phantom{xxxxx}
\end{minipage}
\begin{minipage}{0.2\textwidth}
be an $n\times n$ matrix algebra and let
\end{minipage}
\begin{minipage}{0.35\textwidth}
 \[
\Sigma=\begin{pmatrix}
          \Lambda/\Lambda e\Lambda & 0 & \cdots & 0 \\
           \Lambda/\Lambda e\Lambda & \Lambda/\Lambda e\Lambda & \cdots & 0 \\
           \vdots & \vdots & \ddots  & \vdots \\
           \Lambda/\Lambda e\Lambda & \Lambda/\Lambda e\Lambda & \cdots & \Lambda/\Lambda e\Lambda
         \end{pmatrix}
 \]
\end{minipage}\\
be an $(n-1)\times (n-1)$ matrix algebra.
 Taking an idempotent $f=\begin{pmatrix}
 0 & 0 & \cdots & 0\\
 0 & 0 & \cdots  & 0\\
 \vdots & \vdots   & \ddots & \vdots  \\
  0 & 0 & \cdots  & 1\\
\end{pmatrix}$ and $g=\begin{pmatrix}
 1 & 0 & \cdots & 0\\
 0 & 0 & \cdots  & 0\\
 \vdots & \vdots   & \ddots & \vdots  \\
  0 & 0 & \cdots  & 0\\
\end{pmatrix}$ of $\Gamma$, then by Proposition~\ref{ideal} there exists a homological recollement of module categories, which has $l$-height at least one and
$r$-height at least three:
\[
\xymatrix@C=0.5cm{
\smod\Sigma \ar[rrr]^{\mi} &&& \smod\Gamma  \ar[rrr]^{f\Gamma\otimes_{\Gamma}-} \ar @/_1.5pc/[lll]_{\mq}  \ar @/^1.5pc/[lll]^{\map} \ar @/_3.0pc/[rrr]^{\mr^1}  &&& \smod\Lambda \ar @/^4.5pc/[lll]_{\mr^2}
 \ar @/^1.5pc/[lll]_{\mr^0} \ar @/_1.5pc/[lll]_{\ml^0}
 }
\]
\end{exam}

Ladders can be used to identify idempotent ideals as stratifying ideals:

\begin{prop}\ Let $\Lambda$ be an Artin algebra and $I=\Lambda e\Lambda$ an idempotent ideal of $\Lambda$. Let\\
\begin{minipage}{0.35\textwidth}
\[
\Gamma=
         \begin{pmatrix}
           \Lambda & \Lambda e\Lambda & \Lambda e\Lambda & \cdots & \Lambda e\Lambda & \Lambda e\Lambda\\
           \Lambda &  \Lambda & \Lambda e\Lambda & \cdots  & \Lambda e\Lambda & \Lambda e\Lambda\\
           \Lambda & \Lambda &  \Lambda  & \cdots  & \Lambda e\Lambda & \Lambda e\Lambda\\
 \vdots & \vdots  & \vdots & \ddots & \vdots & \vdots \\
           \Lambda & \Lambda & \Lambda  & \cdots  & \Lambda & \Lambda e\Lambda\\
           \Lambda & \Lambda & \Lambda  & \cdots  & \Lambda & \Lambda\\
           \end{pmatrix}
 \]
\end{minipage}
\begin{minipage}{0.1\textwidth}
\phantom{xxxxx}
\end{minipage}
\begin{minipage}{0.2\textwidth}
be an $n\times n$ matrix algebra and let
\end{minipage}
\begin{minipage}{0.35\textwidth}
\[
\Sigma=\begin{pmatrix}
          \Lambda/\Lambda e\Lambda & 0 & \cdots & 0 \\
           \Lambda/\Lambda e\Lambda & \Lambda/\Lambda e\Lambda & \cdots & 0 \\
           \vdots & \vdots & \ddots  & \vdots \\
           \Lambda/\Lambda e\Lambda & \Lambda/\Lambda e\Lambda & \cdots & \Lambda/\Lambda e\Lambda
         \end{pmatrix}
 \]
\end{minipage} \\
be an $(n-1)\times (n-1)$ matrix algebra. Consider the following recollement:
 \[
\xymatrix@C=0.5cm{
\smod\Sigma \ar[rrr]^{\mi} &&& \smod\Gamma  \ar[rrr]^{f\Gamma\otimes_{\Gamma}-} \ar @/_1.5pc/[lll]_{\mq}  \ar @/^1.5pc/[lll]^{\map} &&& \smod\Lambda \ar @/^1.5pc/[lll]_{\mr^0} \ar @/_1.5pc/[lll]_{\ml^0}
 }    \  \ \ \ \ \ \ \ \  (*)
\]
If $(*)$ has $l$-height two or $r$-height three, then $\Lambda e\Lambda$ is a stratifying ideal of $\Lambda$.
\begin{proof} \ Apply Proposition~\ref{nonsym}: If $(*)$ has $l$-height two or $r$-height three,
then $\Gamma f$ or $f\Gamma$ is a projective $f\Gamma f\mbox{-}$module. This means that $\Lambda e\Lambda$ is projective as a right
$\Lambda\mbox{-}$module or as a left $\Lambda\mbox{-}$module. Thus in both
cases, $\Lambda e\Lambda$ is a stratifying ideal.
\end{proof}
\end{prop}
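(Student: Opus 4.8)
The plan is to recognise $(*)$ as the matrix recollement of Proposition~\ref{ideal} attached to the idempotent ideal $I=\Lambda e\Lambda$, to convert each height hypothesis into one-sided projectivity of $I$ via the bimodule computations behind that proposition, and finally to prove directly that an idempotent ideal which is projective on one side must be stratifying. Throughout I would use the identifications $f\Gamma f\cong\Lambda\cong g\Gamma g$ together with the explicit descriptions $\ml^0=\Gamma f\otimes_\Lambda-$ and $\mr^0\cong\Gamma g\otimes_\Lambda-$ recorded in Proposition~\ref{ideal}.

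First I would extract one-sided projectivity. If $(*)$ has $\ml$-height two, then $\ml^0=\Gamma f\otimes_\Lambda-$ admits a left adjoint, which by Proposition~\ref{ideal} happens exactly when $\Gamma f$ is finitely generated projective as a right $f\Gamma f=\Lambda$-module; since $\Gamma f\cong I^{\,n-1}\oplus\Lambda$ over $\Lambda$, this means $I_\Lambda$ is projective. If instead $(*)$ has the stated $\mr$-height, then Proposition~\ref{ideal} forces the corresponding right adjoint of $\mr^2=\Hom_\Lambda(g\Gamma,-)$ to exist, which is equivalent to $g\Gamma\cong\Lambda\oplus I^{\,n-1}$ being finitely generated projective as a left $\Lambda$-module, i.e. ${}_\Lambda I$ is projective. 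In either case $\Lambda e\Lambda$ is projective as a one-sided $\Lambda$-module.

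It then remains to show that a one-sided projective idempotent ideal is stratifying, i.e. that $\Lambda\lxr\Lambda/\Lambda e\Lambda$ is a homological epimorphism. Suppose for definiteness that ${}_\Lambda(\Lambda e\Lambda)$ is projective and consider the canonical sequence $0\lxr\Lambda e\Lambda\lxr\Lambda\lxr\Lambda/\Lambda e\Lambda\lxr0$ of left modules. It is then a projective resolution of length one, so $\Tor^\Lambda_i(\Lambda/\Lambda e\Lambda,\Lambda/\Lambda e\Lambda)=0$ for $i\geq2$, while $\Tor^\Lambda_1(\Lambda/\Lambda e\Lambda,\Lambda/\Lambda e\Lambda)$ is computed from this resolution as a subobject of $\Lambda e\Lambda\otimes_\Lambda\Lambda/\Lambda e\Lambda$, which vanishes by idempotency since $\Lambda e\Lambda\otimes_\Lambda\Lambda/\Lambda e\Lambda=\Lambda e\Lambda/(\Lambda e\Lambda)^2=0$. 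Hence $\Tor^\Lambda_i(\Lambda/\Lambda e\Lambda,\Lambda/\Lambda e\Lambda)=0$ for all $i>0$, and the Geigle--Lenzing criterion \cite{GeigleLenzing} gives that $\Lambda\lxr\Lambda/\Lambda e\Lambda$ is homological; the case where $(\Lambda e\Lambda)_\Lambda$ is projective is symmetric.

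The main obstacle I expect is the bookkeeping in the second step: one must pin down precisely which bimodule over $f\Gamma f$ or $g\Gamma g$ is detected by each height and check that its projectivity is equivalent to one-sided projectivity of $\Lambda e\Lambda$ itself. In particular it is $g\Gamma$, and \emph{not} $f\Gamma$ (which is always free over $f\Gamma f$), that encodes left projectivity of $I$, so the $\mr$-height hypothesis has to be matched with the correct module exactly as in the analysis of $\mr^2$ in Proposition~\ref{ideal}. Everything else is a routine consequence of idempotency and the length-one resolution.
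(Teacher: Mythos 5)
Your proposal is correct and, for the reduction step, follows the same route as the paper: both arguments feed the height hypotheses into Proposition~\ref{ideal} to extract one-sided projectivity of $I=\Lambda e\Lambda$. But you go further in two ways that are worth recording. First, the paper stops at the assertion ``thus $\Lambda e\Lambda$ is a stratifying ideal'', leaving the implication \emph{one-sided projective idempotent ideal $\Rightarrow$ stratifying} unproved; you supply it via the Geigle--Lenzing criterion, using that $0\lxr I\lxr\Lambda\lxr\Lambda/I\lxr 0$ is a length-one projective resolution (killing $\Tor_i$ for $i\geq 2$) and that idempotency gives $I\otimes_\Lambda\Lambda/I\cong I/I^2=0$ (killing $\Tor_1$). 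This is the standard argument and it is correct; only the placement of the tensor factors in your $\Tor_1$ computation is transposed relative to the side on which $I$ is assumed projective, but both orderings reduce to $I/I^2=0$, so nothing breaks. Second, your insistence that it is $g\Gamma\cong\Lambda\oplus I^{\,n-1}$, and not $f\Gamma$ (which is free over $f\Gamma f$), that detects left projectivity of $I$ is in fact a correction of the paper's own proof, which invokes projectivity of ``$\Gamma f$ or $f\Gamma$''. Since $f\Gamma$ is always free over $f\Gamma f\cong\Lambda$ and the recollement always has $\mr$-height at least three by Proposition~\ref{ideal}(i), the hypothesis as literally printed ($\mr$-height three) is vacuous; the condition that actually encodes projectivity of ${}_\Lambda I$ is the existence of a right adjoint to $\mr^2=\Hom_\Lambda(g\Gamma,-)$, i.e.\ $\mr$-height four. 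Your reading silently repairs this, and with that repair the whole argument is sound.
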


Given an ideal $I$, one may form another kind of algebras also
yielding ladders:

\begin{exam} Let $A$ be a $k$-algebra, where $k$ is a commutative ring, and $I$ a two-sided ideal of $A$. Consider the following matrix rings
\[
\Lambda =\begin{pmatrix}
           A & I & I^{2} & I^{3} & I^{4}\\
           A & A & I & I^{3} & I^{4}\\
           A & A & A  & I & I^{4}\\
            A & A & A  & A & I\\
           A & A & A  & A & A\\
\end{pmatrix} \ \ \ \ \text{and} \ \ \ \ \Gamma = \begin{pmatrix}
           A/I^{4} & I/I^{4} & I^{2}/I^{4} & I^{3}/I^{4}\\
           A/I^{4} & A/I^{4} & I/I^{4} & I^{3}/I^{4}\\
           A/I^{4} & A/I^{4} & A/I^{4} & I/I^{4}\\
           A/I & A/I & A/I  &  A/I\\
                   \end{pmatrix}
\]
and the idempotent elements $e=\begin{pmatrix}
 0 & 0 & \cdots & 0\\
 0 & 0 & \cdots  & 0\\
 \vdots & \vdots   & \ddots & \vdots  \\
  0 & 0 & \cdots  & 1\\
\end{pmatrix}$ and $f=\begin{pmatrix}
1 & 0 & \cdots & 0\\
 0 & 0 & \cdots  & 0\\
 \vdots & \vdots   & \ddots & \vdots  \\
  0 & 0 & \cdots  & 0\\
\end{pmatrix}$.
Then the following recollement of module categories $(\Mod{\Lambda/\Lambda e\Lambda},\Mod{\Lambda},\Mod{e\Lambda e})$ has $\mr$-height at least three \\
\begin{minipage}{0.6 \textwidth}
\[
\xymatrix@C=0.5cm{
\Mod{\Gamma} \ar[rrr]^{} &&& \Mod{\Lambda} \ar[rrr]^{e} \ar @/_1.5pc/[lll]_{}  \ar @/^1.5pc/[lll]^{} \ar @/_3.0pc/[rrr]^{\mr^1}  &&& \Mod{A} \ar @/^4.5pc/[lll]_{\mr^2}
 \ar @/^1.5pc/[lll]_{\mr^0} \ar @/_1.5pc/[lll]_{}
 }
\]
\end{minipage}
\begin{minipage}{0.3 \textwidth}
where $\me=e\Lambda\otimes_{\Lambda}-$, \ $\mr^0\cong\Lambda f\otimes_A-$, $\mr^1=\Hom_{\Lambda}(\Lambda f,-)\iso f\Lambda\otimes_{\Lambda}-\iso f(-)$ and $\mr^2=\Hom_{A}(f\Lambda,-)$.
\end{minipage}
\smallskip
\\
Indeed, $f\Lambda f\cong e\Lambda e\cong A$, $\Lambda/\Lambda e\Lambda\cong \Gamma$, \ ${\rm Hom}_{A}(e\Lambda,-)\cong\Lambda f\otimes_{A}-$ and $f\Lambda=\bigl(\begin{matrix}
 A & I & I^{2} & I^{3} & I^{4}\\
 \end{matrix}\bigr)$. Moreover, $\mr^{2}$ admits a right adjoint if and only if each $I^{i}$ is a projective left $A$-module for all $1\leq i\leq 4$.
\\
In particular, let $k$ be a field, $A=k[x]/\langle x^{n}\rangle$ for some $n\geq 1$ and $I={\rm rad}A$. Then the recollement  has $\mr$-height three, since  $I={\rm rad}A$ is a non-projective maximal ideal of $A$.
\end{exam}

Now we turn to examples, where finitely many ideals are given:

\begin{exam}
\label{mul}
Let $A$ be a $k$-algebra over a commutative ring $k$, and $I_{1}, \ I_{2}, \ldots,$ $I_{n-1}$  two-sided ideals of $A$ such that $I_{n-1}\subseteq I_{i}$ for any $1\leq i\leq n-2$. Consider the following matrix rings \\
\begin{minipage}{0.35 \textwidth}
\[
\Lambda =
         \begin{pmatrix}
           A & I_{1} & I_{2} & \cdots & I_{n-2} & I_{n-1}\\
           A & A & I_{2} & \cdots  & I_{n-2} & I_{n-1}\\
           A & A & A  & \cdots  & I_{n-2} & I_{n-1}\\
 \vdots & \vdots  & \vdots & \ddots & \vdots & \vdots \\
           A & A & A  & \cdots  & A & I_{n-1}\\
           A & A & A  & \cdots  & A & A\\
           \end{pmatrix}
\]
\end{minipage}
\begin{minipage}{0.5 \textwidth}
\phantom{xxxx}
\end{minipage}
\begin{minipage}{0.1 \textwidth}
and
\end{minipage}
\begin{minipage}{0.4 \textwidth}
\[
\Gamma = \begin{pmatrix}
           A/I_{n-1} & I_{1}/I_{n-1} & I_{2}/I_{n-1} & \cdots & I_{n-2}/I_{n-1}\\
           A/I_{n-1} & A/I_{n-1} & I_{2}/I_{n-1} & \cdots  & I_{n-2}/I_{n-1}\\
           \vdots & \vdots  & \vdots & \ddots & \vdots\\
           A/I_{n-1} & A/I_{n-1} & A/I_{n-1}  & \cdots  & A/I_{n-1}\\
                   \end{pmatrix}
\]
\end{minipage} \\
and the idempotent elements $e=\begin{pmatrix}
 0 & 0 & \cdots & 0\\
 0 & 0 & \cdots  & 0\\
 \vdots & \vdots   & \ddots & \vdots  \\
  0 & 0 & \cdots  & 1\\
\end{pmatrix}$ and $f=\begin{pmatrix}
1 & 0 & \cdots & 0\\
 0 & 0 & \cdots  & 0\\
 \vdots & \vdots   & \ddots & \vdots  \\
  0 & 0 & \cdots  & 0\\
\end{pmatrix}$.
Then the following recollement of module categories $(\Mod{\Lambda/\Lambda e\Lambda},\Mod{\Lambda},\Mod{e\Lambda e})$ has $\mr$-height at least three\\
\begin{minipage}{0.5 \textwidth}
\[
\xymatrix@C=0.5cm{
\Mod{\Gamma} \ar[rrr]^{} &&& \Mod{\Lambda} \ar[rrr]^{e} \ar @/_1.5pc/[lll]_{}  \ar @/^1.5pc/[lll]^{} \ar @/_3.0pc/[rrr]^{\mr^1}  &&& \Mod{A} \ar @/^4.5pc/[lll]_{\mr^2}
 \ar @/^1.5pc/[lll]_{\mr^0} \ar @/_1.5pc/[lll]_{\ml^{0}}
 }
\]
\end{minipage}
\begin{minipage}{0.45 \textwidth}
where $\ml^0=\Lambda e\otimes_A-$, \ $\me=e\Lambda\otimes_{\Lambda}-$, \  $\mr^0=\Lambda f\otimes_A-$, $\mr^1=\Hom_{\Lambda}(\Lambda f,-)\iso f\Lambda\otimes_{\Lambda}-\iso f(-)$ and $\mr^2=\Hom_{A}(f\Lambda,-)$.
\end{minipage}

In particular, if $A$ is a hereditary algebra, then the recollement has $\ml$-height at least two and $\mr$-height at least four.
Indeed, $f\Lambda f\cong e\Lambda e\cong A$, \ $\Lambda/\Lambda e\Lambda\cong \Gamma$, \  ${\rm Hom}_{A}(e\Lambda,-)\cong\Lambda f\otimes_{A}-$, \  $f\Lambda\cong\bigl(\begin{matrix}
 A & I_{1} & \cdots & I_{n-1}\\
 \end{matrix}\bigr)$ and $\Lambda e=\begin{pmatrix}
 I_{n-1} \\
 I_{n-1}\\
 \cdots \\
 I_{n-1}\\
 A
 \end{pmatrix}$. Thus $\mr^{2}$ admits a right adjoint if and only if each $I_{i}$ is a projective left $A$-module for all $1\leq i\leq n-1$, and
$\ml^{0}$ admits a left adjoint if and only if $I_{n-1}$ is a projective right $A$-module.

For an explicit example, let $A$ be hereditary, then each $I_{i}$ is a projective left $A$-module for all $1\leq i\leq n-1$. Thus $\mr^{2}$ admits a right adjoint and
$\ml^{0}$ admits a left adjoint.
\end{exam}

\begin{rem} Example~\ref{mul} shows that derived equivalences don't preserve the height of ladders. Indeed, by \cite[Corollary 3.2]{Cy},
the ring $\Lambda$ is derived equivalent to the matrix ring
\[
\Delta:=\begin{pmatrix}
           A/I_{1} & 0 & 0 & \cdots & 0 & 0\\
           A/I_{1} & A/I_{2} & 0 & \cdots  & 0 & 0\\
           A/I_{1} & A/I_{2} & A/I_{3}  & \cdots  & 0 & 0\\
 \vdots & \vdots  & \vdots & \ddots & \vdots & \vdots \\
            A/I_{1} & A/I_{2} & A/I_{3}  & \cdots  & A/I_{n-1} & 0\\
          A/I_{1} & A/I_{2} & A/I_{3}   & \cdots  & A/I_{n-1} & A\\
           \end{pmatrix}
\]
\noindent Let $\Sigma:=\begin{pmatrix}
           A/I_{1} & 0 & 0 & \cdots & 0 \\
           A/I_{1} & A/I_{2} & 0 & \cdots  & 0 \\
           A/I_{1} & A/I_{2} & A/I_{3}  & \cdots  & 0 \\
 \vdots & \vdots  & \vdots & \ddots & \vdots \\
            A/I_{1} & A/I_{2} & A/I_{3}  & \cdots  & A/I_{n-1} \\
     \end{pmatrix}$ and the idempotent element $e:=\begin{pmatrix}
 0 & 0 & \cdots & 0\\
 0 & 0 & \cdots  & 0\\
 \vdots & \vdots   & \ddots & \vdots  \\
  0 & 0 & \cdots  & 1\\
\end{pmatrix}$.  Then the following recollement $(\Mod{\Delta/\Delta e\Delta},$ $\Mod{\Delta},\Mod{e\Delta e})$ has $\ml$-height at least two$\colon$
\[
\xymatrix@C=0.5cm{
\Mod{\Sigma} \ar[rrr]^{\mi} &&& \Mod{\Delta}\ar @/^3.0pc/[rrr]^{\ml^1} \ar[rrr]^{\me} \ar @/_1.5pc/[lll]_{\mq}  \ar @/^1.5pc/[lll]^{\map} &&& \Mod{A}
\ar @/_1.5pc/[lll]_{\ml^0} \ar
 @/^1.5pc/[lll]_{\mr^0}
 }
\]

Indeed, $e\Delta e\cong A$, \ $\Delta/\Delta e\Delta\cong \Sigma$, \ $\Delta e\cong\begin{pmatrix}
 0 \\
 0 \\
 \vdots  \\
  A\\
\end{pmatrix}$ and $e\Delta e\cong A$. This implies that $\Delta e$ is a projective right $e\Delta e$-module and so $\ml^{0}$ admits a left adjoint.
If $I_{n-1}$ is not a projective right $A$-module, then $(\Mod \Gamma, \Mod \Lambda, \Mod A)$ has $\ml$-height one. This means that the two recollements $(\Mod \Gamma, \Mod \Lambda, \Mod A)$ and $(\Mod \Sigma, \Mod \Delta, $ $\Mod A)$ have different $\ml$-heights.
\end{rem}

\subsection{\bf Morphism categories}\ Let $\A$ be an abelian category and $\Mor_n(\A)$ the $n$-morphism category of $\A$ (see \cite{ARS}).
The objects of $\Mor_n(\A)$ are sequences of the form $X_{1}\stackrel{f_1}{\lxr} X_{2}\stackrel{f_2}{\lxr}\cdots \lxr X_{n-1}\xrightarrow{f_{n-1}} X_{n}$ such that all $X_{i}\in \A$; this object is denoted
by $(X, f)$. Given two objects $(X, f)$ and $(X', f')$, a morphism between them is a commutative diagram$\colon$
\[
\xymatrix{
 X_1 \ar[d]^{a_1} \ar[r]^{f_1 \ } & X_2 \ar[d]^{a_2} \ar[r]^{f_2 \ } & \cdots \ar[r]^{f_{n-1} \ } & X_{n} \ar[d]^{a_{n}} \\
 X_1' \ar[r]^{f_1' \ } & X_2' \ar[r]^{f_2' \ } & \cdots \ar[r]^{f_{n-1}' \ } & X_{n}' }
\]
that is,  $f_{i}'a_i=a_{i+1}f_{i}$ for all $1\leq i\leq n-1$, where $a_i\colon A_i\lxr A_i'$ are morphisms in $\A$ for all $1\leq i\leq n$. The category
$\Mor_n(\A)$ is known to be an abelian category.

For an example, let $R$ be a ring and consider the lower triangular $n\times n$-matrix ring
\[
\mathsf{T}_n(R) = \begin{pmatrix}
           R & 0 & \cdots & 0 \\
           R & R & \cdots & 0 \\
           \vdots & \vdots & \ddots  & \vdots \\
           R & R & \cdots & R
         \end{pmatrix}
\]
Then there is an equivalence of abelian categories between $\Mod\mathsf{T}_n(R)$ and $\Mor_n(\Mod{R})$, see \cite{ARS}.

Define functors from $\Mor_n(\A)$ to $\A\colon$
\[
\left\{
  \begin{array}{lll}
   \ml^1(X_1\stackrel{f_1}{\lxr} X_2\stackrel{f_2}{\lxr} \cdots \xrightarrow{f_{n-1}} X_n)=\Coker{f_{n-1}}  & \hbox{} \\
           & \hbox{} \\
   \me(X_1\lxr X_2\lxr \cdots \lxr X_n)=X_n  & \hbox{} \\
   & \hbox{} \\
   \mr^1(X_1\lxr X_2\lxr \cdots \lxr X_n)=X_1  & \hbox{} \\
   & \hbox{} \\
   \mr^3(X_1\stackrel{f_1}{\lxr} X_2\stackrel{f_2}{\lxr} \cdots \xrightarrow{f_{n-1}} X_n)=\Ker{f_1}  & \hbox{}
  \end{array}
\right.
\]
On morphisms these functors are defined in a natural way. Define functors from $\A$ to $\Mor_n(\A)\colon$
\[
\left\{
  \begin{array}{lll}
   \ml^0(X)=(0\lxr 0\lxr \cdots \lxr X)  & \hbox{} \\
           & \hbox{} \\
   \mr^0(X)=(X\xrightarrow{\iden_{X}} X\xrightarrow{\iden_{X}} \cdots \xrightarrow{\iden_{X}} X)  & \hbox{} \\
   & \hbox{} \\
   \mr^2(X)=(X\lxr 0\lxr \cdots \lxr 0)  & \hbox{}
  \end{array}
\right.
\]
Moreover, define the functor
\[
\mi\colon {\Mor}_{n-1}(\A)\lxr {\Mor}_n(\A), \ \mi(X_1\lxr X_2\lxr \cdots \lxr X_{n-1})=(X_1\lxr X_2\lxr \cdots \lxr X_{n-1}\lxr 0)
\]
and finally define functors from $\Mor_{n}(\A)$ to  $\Mor_{n-1}(\A)\colon$
\[
\left\{
  \begin{array}{lll}
   \mq(X_1\lxr X_2\lxr \cdots \lxr X_n)=(X_1\lxr X_2\lxr \cdots \lxr X_{n-1})  & \hbox{} \\
           & \hbox{} \\
   \map(X_1\xrightarrow{f_{1}}  \cdots \xrightarrow{f_{n-1}} X_n)=(\Ker{(f_{n-1}\cdots f_2 f_1)}\lxr\Ker{(f_{n-1}\cdots f_2)}\lxr  \cdots \lxr\Ker{f_{n-1}}) & \hbox{} \\
           & \hbox{}
  \end{array}
\right.
\]

\begin{exam}
\label{extension}
Let $\Mor_n(\A)$ be the $n$-morphism category of an abelian category $\A$.
Then the functors defined above fit into a recollement of abelian categories $(\Mor_{n-1}(\A), \Mor_{n}(\A), \A)$ with $\ml$-height two and $\mr$-height four$\colon$
\[
\xymatrix@C=0.5cm{
\Mor_{n-1}(\A) \ar[rrr]^{\mi} &&& \Mor_{n}(\A) \ar @/_3.0pc/[rrr]^{\mr^1} \ar @/_7.5pc/[rrr]^{\mr^3} \ar @/^3.0pc/[rrr]^{\ml^1} \ar[rrr]^{\me} \ar @/_1.5pc/[lll]_{\mq}  \ar @/^1.5pc/[lll]^{\map} &&& \A \ar @/^4.5pc/[lll]_{\mr^2}
\ar @/_1.5pc/[lll]_{\ml^0} \ar
 @/^1.5pc/[lll]_{\mr^0}
 }
\]

\textsf{Claim} $1\colon$  $(\Mor_{n-1}(\A), \Mor_{n}(\A), \A)$ is a recollement.

By the definition of the functor $\me$, $\Ker\me$ is equivalent to $\Mor_{n-1}(\A)$. It suffices to prove that $(\ml^{0}, \me, \mr^{0})$ is an adjoint triple and $\ml^{0}$ is fully faithful.

Let $X$ be an object in $\A$ and $(Y, g)=(Y_1\xrightarrow{g_1}Y_2\xrightarrow{g_2} \cdots \xrightarrow{g_{n-2}} Y_{n-1}\xrightarrow{g_{n-1}} Y_{n})$ an object in $\Mor_{n}(\A)$. Since $\Hom_{\Mor_{n}(\A)}(\ml^{0}(X), (Y, g))\cong \Hom_{\A}(X, Y_{n})=\Hom_{\A}(X, \me(Y,g))$, it follows that
$(\ml^{0}, \me)$ is an adjoint pair between $\Mor_n(\A)$ and $\A$. Since the object $\mr^0(X)$ has an identity, there are isomorphisms $\Hom_{\Mor_{n}(\A)}((Y, g), \mr^{0}(X))\cong \Hom_{\A}(Y_{n}, X)=\Hom_{\A}(\me(Y, g), X)$. This shows that $(\me, \mr^{0})$ is an adjoint pair between $\A$ and $\Mor_{n}(\A)$. Since $\me\circ\ml^{0}(X)=X$ for each object $X$ in $\A$, it follows that $\ml^{0}$ is fully faithful.

\textsf{Claim} $2\colon$ $(\ml^1, \ml^0)$, $(\mr^0, \mr^1)$, $(\mr^1, \mr^2)$ and $(\mr^2, \mr^3)$ are adjoint pairs.

Let $(X,f)$ be an object in $\Mor_{n}(\A)$ and $Y$ an object in $\A$. The
isomorphisms ${\Hom_{\Mor_n(\A)}((X,f), \ml^0(Y))}$ $\cong {\Hom_{\A}(\Coker f_{n-1}, Y)} = \Hom_{\A}(\ml^1(X,f), Y)$ imply that $(\ml^1, \ml^0)$ is an adjoint pair.
Moreover, the isomorphisms ${\Hom_{\Mor_n(\A)}((X,f), \mr^2(Y))}\cong \Hom_{\A}(X_1, Y)=\Hom_{\A}(\mr^1(X,f), Y)$ imply that $(\mr^1, \mr^2)$ is an adjoint pair.
Moreover, $(\mr^0, \mr^1)$ and $(\mr^2, \mr^3)$ are adjoint pairs. This implies that the recollement $(\Mor_{n-1}(\A), \Mor_{n}(\A), \A)$ admits a ladder of $\ml$-height two and $\mr$-height four.

Note that $\ml^1$ and $\mr^3$ are not in general exact functors.
\end{exam}

\begin{rem} 
Consider the $n$-morphism category $\Mor_n(\A)$ and its recollement.
\begin{enumerate}
\item Since the functor $\mq$ is exact, the functor $\mi$ is a homological
embedding by \cite[Theorem $3.9$]{Psaroud:homolrecol}.

\item A special case of Example~\ref{extension} is about
module categories: The recollement of module categories $(\Mod{\mathsf{T}_{n-1}(R)}, \Mod{\mathsf{T}_{n}(R)}, \Mod{R})$ admits a ladder of $\ml$-height two and $\mr$-height four. This relates to previous examples involving homological
embeddings as follows:

Let $e=\begin{pmatrix}
 0 & 0 & \cdots & 0\\
 0 & 0 & \cdots  & 0\\
 \vdots & \vdots   & \ddots & \vdots  \\
  0 & 0 & \cdots  & 1\\
\end{pmatrix}$ and $f=\begin{pmatrix}
1 & 0 & \cdots & 0\\
 0 & 0 & \cdots  & 0\\
 \vdots & \vdots   & \ddots & \vdots  \\
  0 & 0 & \cdots  & 0\\
\end{pmatrix}$. Then the above functor $\me=e\mathsf{T}_n(R)$, the module $e\mathsf{T}_n(R)$ is a left projective $e\mathsf{T}_n(R)e$-module,
$\Hom_{e\mathsf{T}_n(R)e}(e\mathsf{T}_n(R),$ $ \ e\mathsf{T}_n(R)e)\cong \mathsf{T}_n(R)f$ as $(\mathsf{T}_n(R), e\mathsf{T}_n(R)e)$-bimodules,
and moreover, $\mr^{3}=\Hom_{\mathsf{T}_n(R)}(\Hom_{f\mathsf{T}_n(R)f}(f\mathsf{T}_n(R), f\mathsf{T}_n(R)f), -)$.

Here, $\Hom_{f\mathsf{T}_n(R)f}(f\mathsf{T}_n(R), f\mathsf{T}_n(R)f)=(R, 0, \cdots, 0)$, which is not a left projective $\mathsf{T}_n(R)$-module. This implies that $\mr^{3}$ is not an exact functor.
\end{enumerate}
\end{rem}

\subsection{Characterising the height of a ladder} A ladder  of r-height or l-height $n$  can be built up inductively by going up or going down step by step for some integer $n\geq 2$. A characterisation is given when a recollement of module categories admits a ladder of $\mr$-height or $\ml$-height exactly $m$ for $m\geq 2$.

\begin{thm} \label{charheight}
Let $\Lambda$ be an Artin algebra, $e$ an idempotent element\footnote{To avoid trivial cases in both these two theorems (i.e.\ $\Gamma$ being zero or $\Lambda$), the idempotent $e$ is considered to be non-trivial.} 
and $\Gamma:=e \Lambda e$. Consider the recollement of $\Mod{\Lambda}$ induced by
the idempotent element $e$. Define a sequence of $\Lambda$-$\Gamma$ \textnormal{(or $\Gamma$-$\Lambda$)}-bimodules
by $M_0:=e\Lambda$, $M_1:=\Hom_\Gamma(M_0,\Gamma)$,
$M_2:=\Hom_\Lambda(M_1,\Lambda)$, \dots, $M_{2n+1}:=\Hom_\Gamma(M_{2n},\Gamma)$,
$M_{2n+2}:=\Hom_\Lambda(M_{2n+1},\Lambda)$, \dots (for $n \geq 0$). Then:
\begin{enumerate}
\item
The recollement admits a ladder of $r$-height exactly $2n+2$ if and only if
$M_j$ is projective as a left $\Gamma$-module for all even $j \leq 2n$,
$M_j$ is projective as a left $\Lambda$-module for all odd $j < 2n+1$ and
$M_{2n+1}$ is not projective as a left $\Lambda$-module.
\item
The recollement admits a ladder of $r$-height exactly $2n+3$ if and only if
$M_j$ is projective as a left $\Gamma$-module for all even $j \leq 2n$, $M_{2n+2}$ is not projective as a left $\Gamma$-module and
$M_j$ is projective as a left $\Lambda$-module for all odd $j \leq 2n+1$.
\end{enumerate}
\end{thm}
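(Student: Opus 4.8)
The plan is to construct the downward $\mr$-ladder one rung at a time and to translate the existence of each successive right adjoint into a projectivity condition on the bimodules $M_j$ by means of Watts' Theorem, exactly as in the proof of Proposition~\ref{ideal}; the two height formulas then follow by bookkeeping. First I would record the two relevant features of the recollement induced by $e$. Writing $\Gamma=e\Lambda e$, the functor $\me$ is $e(-)\cong e\Lambda\otimes_\Lambda-$, and its right adjoint is $\mr^0=\mr=\Hom_\Gamma(e\Lambda,-)=\Hom_\Gamma(M_0,-)$, where $M_0=e\Lambda$ is viewed as a $\Gamma$-$\Lambda$-bimodule. Since $\Lambda$ is an Artin algebra, each module appearing in the sequence $M_0,M_1,M_2,\dots$ has finite length over the underlying commutative Artinian base ring, hence is finitely generated over whichever of $\Gamma$ or $\Lambda$ acts on the relevant side; thus finite generation is never the obstruction, and only projectivity will matter.

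The technical heart is a single Watts-type lemma, which I would isolate and prove once. It asserts: if $M$ is a finitely generated $\Gamma$-$\Lambda$-bimodule, then $\Hom_\Gamma(M,-)\colon\Mod\Gamma\lxr\Mod\Lambda$ admits a right adjoint if and only if $M$ is projective as a left $\Gamma$-module, in which case there is a natural isomorphism $\Hom_\Gamma(M,-)\cong\Hom_\Gamma(M,\Gamma)\otimes_\Gamma-$ whose right adjoint is $\Hom_\Lambda(\Hom_\Gamma(M,\Gamma),-)$. The proof is brief: a functor between module categories has a right adjoint precisely when it preserves all colimits; $\Hom_\Gamma(M,-)$ is always left exact and, as $M$ is finitely generated, always preserves coproducts, so the only remaining requirement is right exactness, which holds exactly when $M$ is projective. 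Watts' Theorem then identifies this colimit-preserving functor with the tensor functor $\Hom_\Gamma(M,\Gamma)\otimes_\Gamma-$, and its right adjoint is the displayed Hom functor. The statement with $\Gamma$ and $\Lambda$ interchanged holds by the same argument.

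Applying the lemma repeatedly yields the inductive invariant I would carry: provided $\mr^0,\dots,\mr^j$ all exist, one has $\mr^j\cong\Hom_{R_j}(M_j,-)$, where $R_j=\Gamma$ for even $j$ and $R_j=\Lambda$ for odd $j$, and the dual $M_{j+1}=\Hom_{R_j}(M_j,R_j)$ is exactly the bimodule of the theorem's recursion. Consequently $\mr^{j+1}$ exists if and only if $M_j$ is projective as a left $R_j$-module, that is, as a left $\Gamma$-module when $j$ is even and as a left $\Lambda$-module when $j$ is odd. This is the step that does all the work, and the only delicate point is keeping track of the alternating sides on which $\Gamma$ and $\Lambda$ act as one passes from $M_j$ to $M_{j+1}$.

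It remains to read off the heights. By definition the recollement has $\mr$-height exactly $m$ precisely when $\mr^0,\dots,\mr^{m-1}$ exist but $\mr^m$ does not. For $m=2n+2$ this requires $\mr^1,\dots,\mr^{2n+1}$ to exist and $\mr^{2n+2}$ to fail, which by the invariant is equivalent to: $M_j$ projective over $\Gamma$ for all even $j\le 2n$, $M_j$ projective over $\Lambda$ for all odd $j<2n+1$, and $M_{2n+1}$ not projective over $\Lambda$; this is statement (i). For $m=2n+3$ one requires in addition that $\mr^{2n+2}$ exist (so $M_{2n+1}$ is projective over $\Lambda$) and that $\mr^{2n+3}$ fail (so $M_{2n+2}$ is not projective over $\Gamma$), which is statement (ii). The main obstacle throughout is the Watts-type lemma together with the bimodule bookkeeping; once the invariant is established, both height counts are immediate.
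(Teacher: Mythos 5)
Your proposal is correct and follows essentially the same route as the paper's proof: build the $\mr$-ladder downwards one step at a time, use finite generation to get coproduct preservation and Watts' Theorem to identify each $\mr^j$ with $\Hom_{R_j}(M_j,-)$ (alternating $R_j=\Gamma$ or $\Lambda$), and translate the existence of the next right adjoint into projectivity of $M_j$. Your write-up just makes explicit the Watts-type lemma and the alternating-bimodule bookkeeping that the paper leaves terse.
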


\begin{proof}
The ladder can be built up inductively by going down step by step. When a new
functor $r^{j+1}$ appears at the bottom, it is a right adjoint and thus left
exact. Moreover, then $r^j$, which is already known to be left exact, is a left
adjoint of $r^{j+1}$ and thus right exact, hence exact.

Since all modules $M_j$ are finitely generated over $\Lambda$ or $\Gamma$,
respectively, all functors $\Hom(M_j,-)$ preserve coproducts. Thus, Watts'
theorem can be applied to identify all $r^j$ inductively as such functors.
Therefore, $r^{j+1}$ is exact if and only $M_j$ is a projective left module over
$\Lambda$ or $\Gamma$, respectively. Building up the ladder stops exactly
when the functor $r^{j+1}$ at the bottom is not exact, which means that the
module $M_j$ occuring in its first argument is not projective.
\end{proof}

We close this subsection by formulating the dual result. In this case, to get the adjoints instead of Watts' theorem we use the well known isomorphism $P\otimes_{\Lambda}-\cong \Hom_{\Lambda}(\Hom_{\Lambda}(P,\Lambda),-)$ for a finitely generated projective $\Lambda$-module $P$. The easy proof is left to the reader.

\begin{thm} \label{charheight2}
Let $\Lambda$ be an Artin algebra, $e$ an idempotent element and
$\Gamma:=e \Lambda e$. Consider the recollement of $\Mod{\Lambda}$ induced by
the idempotent element $e$. Define a sequence of $\Lambda$-$\Gamma$ \textnormal{(or $\Gamma$-$\Lambda$)}-bimodules
by $M_0:=\Lambda e$, $M_1:=\Hom_\Gamma(M_0,\Gamma)$,
$M_2:=\Hom_\Lambda(M_1,\Lambda)$, \dots, $M_{2n+1}:=\Hom_\Gamma(M_{2n},\Gamma)$,
$M_{2n+2}:=\Hom_\Lambda(M_{2n+1},\Lambda)$, \dots (for $n \geq 0$). Then:
\begin{enumerate}
\item
The recollement admits a ladder of $l$-height exactly $2n+2$ if and only if
$M_j$ is projective as a right $\Gamma$-module for all even $j \leq 2n$,
$M_j$ is projective as a right $\Lambda$-module for all odd $j < 2n+1$ and
$M_{2n+1}$ is not projective as a right $\Lambda$-module.
\item
The recollement admits a ladder of $l$-height exactly $2n+3$ if and only if
$M_j$ is projective as a right $\Gamma$-module for all even $j < 2n+2$, $M_{2n+2}$ is not projective as a right $\Gamma$-module and
$M_j$ is projective as a right $\Lambda$-module for all odd $j \leq 2n+1$.
\end{enumerate}
\end{thm}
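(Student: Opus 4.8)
The plan is to run the proof of Theorem~\ref{charheight} in the dual direction: ``right adjoint going down'' becomes ``left adjoint going up'', the $\Hom$-functors become tensor functors, and ``projective as a left module'' becomes ``projective as a right module''. The starting datum is the leftmost functor of the standard adjoint triple, namely $\ml^0 = \Lambda e \otimes_\Gamma - = M_0\otimes_\Gamma -$, which is a tensor functor and hence automatically right exact. First I would prove by induction that every functor $\ml^j$ occurring in the ladder is again a tensor functor, $\ml^j \cong M_j\otimes_R -$, where $R=\Gamma$ for even $j$ and $R=\Lambda$ for odd $j$, and where $M_{j+1}=\Hom_R(M_j,R)$ is exactly the module prescribed in the statement. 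Since $\Lambda$ is an Artin algebra and $M_0=\Lambda e$ is finitely generated, each $M_j$ is again finitely generated over the relevant ring, so all these tensor functors preserve both products and coproducts.

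The inductive step is where the left adjoints are manufactured, and here I would use the isomorphism recalled in the remark preceding the statement rather than Watts' theorem: for $M_j$ finitely generated projective as a right $R$-module one has $M_j\otimes_R - \cong \Hom_R(\Hom_R(M_j,R),-)=\Hom_R(M_{j+1},-)$. Under this hypothesis $\ml^j$ is exhibited as a $\Hom$-functor, hence is a right adjoint, and the tensor--hom adjunction supplies its left adjoint $\ml^{j+1}=M_{j+1}\otimes -$, completing the identification at the next stage. Conversely, if $\ml^j$ admits a left adjoint then $\ml^j$ is a right adjoint and so preserves limits, in particular is left exact; as $\ml^j=M_j\otimes_R -$ is a tensor functor, left exactness forces $M_j$ to be flat, hence (being finitely generated over an Artin algebra) projective as a right $R$-module. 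Thus $\ml^{j+1}$ exists in the ladder \emph{if and only if} $M_j$ is projective as a right $R$-module, which is precisely the dual of the ``$\ml^j$ exact iff $M_j$ projective'' mechanism driving Theorem~\ref{charheight}.

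Assembling these equivalences yields the height count. The ladder realises the functors $\ml^0,\ml^1,\dots,\ml^{m-1}$, that is, it has $l$-height at least $m$, exactly when $M_0,M_1,\dots,M_{m-2}$ are all projective as right modules over the appropriate rings; it has $l$-height \emph{exactly} $m$ precisely when in addition $M_{m-1}$ fails to be projective, so that $\ml^m$ cannot appear. Sorting the indices by parity, so that even $j$ contributes the condition ``projective as a right $\Gamma$-module'' and odd $j$ the condition ``projective as a right $\Lambda$-module'', and then specialising $m=2n+2$ and $m=2n+3$, gives statements (i) and (ii) respectively.

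The only genuinely delicate point, and the sole respect in which this argument differs from that of Theorem~\ref{charheight}, is the systematic bookkeeping of one-sided module structures: at each stage one must check that $M_j$ carries the correct $\Lambda$-$\Gamma$- or $\Gamma$-$\Lambda$-bimodule structure, so that dualising and tensoring land in the intended module category and the tensor--hom adjunction is applied on the correct side. This tracking is entirely routine, which is why the proof is ``easy'' and can be left to the reader, but it is the one place where care is required; everything else is a mechanical transcription of the proof of Theorem~\ref{charheight} with tensor and $\Hom$ interchanged.
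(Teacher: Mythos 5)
Your proposal is correct and follows exactly the route the paper intends: the paper leaves this proof to the reader with the explicit hint to replace Watts' theorem by the isomorphism $P\otimes_{R}-\cong \Hom_{R}(\Hom_{R}(P,R),-)$ for finitely generated projective $P$, and your induction identifying each $\ml^{j}$ as $M_{j}\otimes_{R}-$ and testing for a left adjoint via flatness (hence projectivity, by finite generation over an Artin algebra) of $M_{j}$ as a right module is precisely the dual of the mechanism in Theorem~\ref{charheight}. No gaps.
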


\subsection{Abelian ladders from triangulated ladders through
coherent functors}
In this subsection we show how from a recollement of triangulated categories we can obtain a recollement of abelian categories via abelianisation, i.e.\ by taking the category of coherent functors. This method will produce recollements of abelian categories with a ladder.

Recall some basics on coherent functors. Let $\A$ be an additive category. An additive functor $F\colon \A^{\op}\lxr \mathfrak{Ab}$ is called {\bf coherent}, if there exists an exact sequence of the form$\colon$
\[
\xymatrix{
\Hom_{\A}(-,X) \ar[r] & \Hom_{\A}(-,Y) \ar[r]^{ } & F \ar[r] & 0 
}
\] 
where the objects $X$ and $Y$ lie in $\A$. We denote by $\smod{\A}$ the category of coherent functors over $\A$. Recall that a map $X\lxr Y$ is a 
weak kernel of $Y\lxr Z$ if the following sequence is exact$\colon$
\[
\xymatrix{
\Hom_{\A}(-,X) \ar[r] & \Hom_{\A}(-,Y) \ar[r]^{ } & \Hom_{\A}(-,Z)  
}
\]
The category of coherent functors $\smod{\A}$ is abelian if and only if $\A$ has weak kernels. Moreover, the category $\smod{\A}$ has enough projectives and the Yoneda embedding $\mathsf{Y}_{\A}\colon \A\lxr\smod{\A}$, $A\mapsto \Hom_{\A}(-,A)$, induces an equivalence between $\A$ and $\Proj(\smod{\A})$ (when $\A$ has split idempotents). For more details on coherent functors we refer to the work of Auslander \cite{coherent, Auslander:queen}. 

Let $\T$ be a triangulated category. Since $\T$ has weak kernels, the category of coherent functors $\smod{\T}$ is abelian. The latter category is also known as the abelianisation of $\T$, see for example \cite[Appendix A]{Krause:Localizationtheory}.

Let $\me\colon \T\lxr \V$ be a triangle functor between triangulated categories. Then by the universal property of the Yoneda embedding, there is a unique exact functor $\me_{\mathsf{coh}}\colon \smod\T\lxr \smod\V$ such that the following diagram is commutative$\colon$
\[
\xymatrix@C=4em{
\T \ar[r]^{\mathsf{Y}_{\T} \ \ } \ar[d]_{\me} & \smod{\T} 
\ar[d]^{\me_{\mathsf{coh}}} \\
\V  \ar[r]^{\mathsf{Y}_{\V} \ \ } & \smod{\V} }
\]
\begin{minipage}{0.4\textwidth}
Consider now a recollement $\mathsf{R}_{\mathsf{tr}}(\U,\T,\V)$ of triangulated categories$\colon$
\end{minipage}
\begin{minipage}{0.6\textwidth}
\[
\xymatrix@C=0.5cm{
\U \ar[rrr]^{\mathsf{i}} &&& \T \ar[rrr]^{\mathsf{e}}  \ar @/_1.5pc/[lll]_{\mathsf{q}}  \ar
 @/^1.5pc/[lll]^{\mathsf{p}} &&& \V
\ar @/_1.5pc/[lll]_{\mathsf{l}} \ar
 @/^1.5pc/[lll]^{\mathsf{r}}
 }
\]
\end{minipage}
\begin{minipage}{0.4\textwidth}
Then it easily follows (see \cite[Lemma~A.3]{Krause:Localizationtheory}) that there is an adjoint triple$\colon$
\end{minipage}
\begin{minipage}{0.6\textwidth}
\[
\xymatrix@C=0.5cm{
\smod{\T} \ar[rrr]^{\me_{\mathsf{coh}}}  &&& \smod{\V}
\ar @/_1.5pc/[lll]_{\ml_{\mathsf{coh}}} \ar
 @/^1.5pc/[lll]^{\mr_{\mathsf{coh}}} } 
\]
\end{minipage}
Also, the functors $\ml_{\mathsf{coh}}$ and $\mr_{\mathsf{coh}}$ are fully faithful since the functors $\ml$ and $\mr$ are fully faithful, respectively. 

\begin{prop}
\label{proprecocoherentfunctors}
Let $\mathsf{R}_{\mathsf{tr}}(\U,\T,\V)$ be a recollement of triangulated categories. Then:
\begin{enumerate}
\item The abelianisation of $\mathsf{R}_{\mathsf{tr}}(\U,\T,\V)$ gives rise to a recollement of abelian cateogries
\begin{equation}
\label{recolcohfunctors}
\xymatrix@C=0.5cm{
\Ker{\me_{\mathsf{coh}}} \ar[rrr]^{\inc} &&& \smod\T \ar[rrr]^{\me_{\mathsf{coh}}}  \ar @/_1.5pc/[lll]_{}  \ar
 @/^1.5pc/[lll]^{} &&& \smod\V
\ar @/_1.5pc/[lll]_{\ml_{\mathsf{coh}}} \ar
 @/^1.5pc/[lll]^{\mr_{\mathsf{coh}}}
 }
\end{equation}

\item If 
\begin{equation}
\label{completerecolcohfunctors}
\xymatrix@C=0.5cm{
\smod\U \ar[rrr]^{\mi_{\mathsf{coh}}} &&& \smod\T \ar[rrr]^{\me_{\mathsf{coh}}}  \ar @/_1.5pc/[lll]_{\mq_{\mathsf{coh}}}  \ar
 @/^1.5pc/[lll]^{\map_{\mathsf{coh}}} &&& \smod\V
\ar @/_1.5pc/[lll]_{\ml_{\mathsf{coh}}} \ar
 @/^1.5pc/[lll]^{\mr_{\mathsf{coh}}}
 }
\end{equation}
is a recollement of abelian categories, then $\mathsf{R}_{\mathsf{tr}}(\U,\T,\V)$ splits.

\item If $\mathsf{R}_{\mathsf{tr}}(\U,\T,\V)$ admits a ladder of $\ml$-height $n$, resp. $\mr$-height $m$, then the recollement $(\ref{recolcohfunctors})$ admits a ladder of $\ml$-height $n$, resp. $\mr$-height $m$, in the sense of Definition~\ref{defnladder}.
\end{enumerate}
\begin{proof}
(i) This follows from the above discussion and Remark~\ref{remadjointtriplerecol}.

(ii) It is easy to check that the diagram $(\ref{completerecolcohfunctors})$ satisfies all conditions of a recollement of abelian categories except that $\Image \mi_{\mathsf{coh}}=\Ker \me_{\mathsf{coh}}$. This, in particular, means that the sequence of abelian categories $0\lxr \smod\U\lxr \smod\T\lxr \smod\V\lxr 0$ is not, in general, exact. Let us assume now that $(\ref{completerecolcohfunctors})$ is a recollement. 

By Remark \ref{remdefrecol}, for a functor $F$ in 
$\smod{\T}$ there is an exact sequence 
\[
 \xymatrix{
 \ml_{\mathsf{coh}} \me_{\mathsf{coh}}(F) \ar[r]^{ } &
  F  \ar[r]^{ } & \mi_{\mathsf{coh}} \mq_{\mathsf{coh}}(F) \ar[r] & 0  }.
\]
For $F=\Hom_{\T}(-,T)$ there are isomorphisms 
\[
 \ml_{\mathsf{coh}} \me_{\mathsf{coh}}(F)\cong \Hom_{\T}(-,\ml\me(T)) \ \ \ \text{and} \ \ \ \mi_{\mathsf{coh}} \mq_{\mathsf{coh}}(F)\cong \Hom_{\T}(-, \mi\mq(T)).
\]
This yields the exact sequence
\begin{equation}
\label{exactcoherentseq}
 \xymatrix{
 \Hom_{\T}(-,\ml\me(T)) \ar[r]^{ } &
  \Hom_{\T}(-,T)  \ar[r]^{ } & \Hom_{\T}(-, \mi\mq(T))\ar[r] & 0  }.
\end{equation}
Since $\mathsf{R}_{\mathsf{tr}}(\U,\T,\V)$ is a recollement of triangulated categories, there is a canonical triangle $\ml\me(T)\lxr T\lxr \mi\mq(T)\lxr \ml\me(T)[1]$. Note that the maps in $(\ref{exactcoherentseq})$ are induced by the adjunction morphisms of the canonical triangle. Since the sequence $(\ref{exactcoherentseq})$ is exact for all $X$ in $\T$, it follows by Yoneda's Lemma that the morphism $\mi\mq(T)\lxr \ml\me(T)[1]$ is zero. This implies that the canonical triangle splits. Similarly, the exact sequence 
\[
 \xymatrix{
0\ar[r] & \mi_{\mathsf{coh}} \map_{\mathsf{coh}}(F) \ar[r]^{ } &
  F  \ar[r]^{ } & \mr_{\mathsf{coh}} \me_{\mathsf{coh}}(F)   }
\]
yields the canonical triangle $\mi\map(T)\lxr T\lxr \mr\me(T)\lxr \mi\map(T)[1]$ splits. Thus, the recollement $\mathsf{R}_{\mathsf{tr}}(\U,\T,\V)$ splits. 
\begin{minipage}{0.3\textwidth}
(iii) Assume that $\mathsf{R}_{\mathsf{tr}}(\U,\T,\V)$ admits a ladder of $\ml$-height $n$, or a ladder of $\mr$-height $m$ (in the sense of \cite{AKLY1}). This
gives  the diagram$\colon$
\end{minipage}
\begin{minipage}{0.7\textwidth}
\[
\xymatrix@C=0.5cm{
 &&& \vdots \ \ \ \ \ \ \ \ \ \ \ \ \ \ \ \ \ \ \ \ \ \ \ \ \ \ \  \ \ \  \ \  \ \ \ \ \ \ \ \ \vdots &&& \\
 &&&  &&& \\
\U \ar[rrr]^{\mathsf{i}} \ar @/_3.0pc/[rrr]^{{\map^1}} \ar @/^3.0pc/[rrr]^{\mq^1}  &&& \T \ar @/_4.5pc/[lll]_{\mq^2}\ar[rrr]^{\mathsf{e}} \ar @/^3.0pc/[rrr]^{\mathsf{l}^1}   \ar @/_1.5pc/[lll]_{\mathsf{q}} \ar @/_3.0pc/[rrr]^{\mathsf{r}^1}  \ar
 @/^1.5pc/[lll]_{\mathsf{p}} \ar @/^4.5pc/[lll]_{\map^2} &&& \V \ar @/_4.5pc/[lll]_{\mathsf{l}^2} \ar @/^4.5pc/[lll]_{\mathsf{r}^2}
\ar @/_1.5pc/[lll]_{\mathsf{l}^0} \ar
 @/^1.5pc/[lll]_{\mathsf{r}^0} \\
 &&& &&& \\
 &&& \vdots \ \ \ \ \ \ \ \ \ \ \ \ \ \ \ \ \ \ \ \ \ \ \ \ \ \ \  \ \ \  \ \  \ \ \ \ \ \ \ \ \vdots &&& \\
 }
\]
\end{minipage}
\begin{minipage}{0.3 \textwidth}
which either goes up $n$ steps or goes down $m$ steps. Then from (i), there is a recollement of abelian categories with a ladder of $\ml$-height $n$ ($n-1$ left adjoints of $\me_{\mathsf{coh}}$)$\colon$
\end{minipage}
\begin{minipage}{0.7 \textwidth}
\vspace{-1cm}
\[
\xymatrix@C=0.5cm{
 &&& \ \ \ \ \ \ \ \ \ \ \ \ \ \ \ \ \ \ \ \ \ \ \ \ \  \ \ \  \ \  \ \ \ \ \ \ \ \ \ \ \ \ \vdots &&& \\
 &&&  &&& \\
\Ker\mathsf{e}_{\mathsf{coh}} \ar[rrr]^{\inc} &&& \smod\T \ar[rrr]^{\mathsf{e}_{\mathsf{coh}}} \ar @/^3.0pc/[rrr]^{\mathsf{l}^1_{\mathsf{coh}}}   \ar @/_1.5pc/[lll]_{}  \ar
 @/^1.5pc/[lll]^{} &&& \smod\V \ar @/_4.5pc/[lll]_{\mathsf{l}^2_{\mathsf{coh}}} 
\ar @/_1.5pc/[lll]_{\mathsf{l}^0_{\mathsf{coh}}} \ar
 @/^1.5pc/[lll]^{\mathsf{r}^0_{\mathsf{coh}}} \\
 }
 \]
\end{minipage}
\begin{minipage}{0.2 \textwidth}
or a recollement with a ladder of $\mr$-height $m$ ($m-1$ right adjoints of $\me_{\mathsf{coh}}$)$\colon$
\end{minipage}
\begin{minipage}{0.28 \textwidth}
\vspace{-0.5cm}
\[
\ \ \ \ \ \ \ \ \ \ \ \ \ \ \ \ \ \ \ \ \ \ \ \ \ \xymatrix@C=0.5cm{
\Ker\mathsf{e}_{\mathsf{coh}} \ar[rrr]^{\inc} &&& \smod\T \ar[rrr]^{\mathsf{e}}   \ar @/_1.5pc/[lll]_{} \ar @/_3.0pc/[rrr]^{\mathsf{r}^1_{\mathsf{coh}}}  \ar
 @/^1.5pc/[lll]^{} &&& \smod\V \ar @/^4.5pc/[lll]_{\mathsf{r}^2_{\mathsf{coh}}}
\ar @/_1.5pc/[lll]_{\mathsf{l}^0} \ar
 @/^1.5pc/[lll]_{\mathsf{r}^0_{\mathsf{coh}}} 
 &&& &&& \\
 &&& &&& \\
 &&& \ \ \ \ \ \ \ \ \ \ \ \ \ \ \ \ \ \ \ \ \ \ \ \ \ \ \ \ \  \ \ \  \ \  \ \ \ \ \ \ \ \ \vdots &&& \\ }
 \]
\end{minipage}
\end{proof}
\end{prop}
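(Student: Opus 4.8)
The plan is to treat the three parts in turn, using the adjoint triple $(\ml_{\mathsf{coh}},\me_{\mathsf{coh}},\mr_{\mathsf{coh}})$ produced in the discussion preceding the statement together with the reconstruction machinery of Section~2. For (i), I would note that $\me_{\mathsf{coh}}$ has both a left adjoint $\ml_{\mathsf{coh}}$ and a right adjoint $\mr_{\mathsf{coh}}$, so it is at once right exact and left exact, hence exact, while $\ml_{\mathsf{coh}}$ and $\mr_{\mathsf{coh}}$ are fully faithful. These are exactly the hypotheses of Remark~\ref{remadjointtriplerecol}, which then reconstructs a recollement of abelian categories with left term $\Ker\me_{\mathsf{coh}}$; this is $(\ref{recolcohfunctors})$.

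For (ii), assume that $(\ref{completerecolcohfunctors})$ is a genuine recollement, so that the canonical exact sequences of Remark~\ref{remdefrecol}(v) are available in $\smod\T$. I would feed the representable functor $F=\mathsf{Y}_{\T}(T)=\Hom_{\T}(-,T)$ into the right-exact part $\ml_{\mathsf{coh}}\me_{\mathsf{coh}}(F)\lxr F\lxr\mi_{\mathsf{coh}}\mq_{\mathsf{coh}}(F)\lxr 0$ of the first such sequence. Since abelianisation commutes with the Yoneda embeddings, i.e.\ $\me_{\mathsf{coh}}\mathsf{Y}_{\T}\cong\mathsf{Y}_{\V}\me$ and $\ml_{\mathsf{coh}}\mathsf{Y}_{\V}\cong\mathsf{Y}_{\T}\ml$, and likewise for $\mi$ and $\mq$, this identifies $\ml_{\mathsf{coh}}\me_{\mathsf{coh}}(F)\cong\Hom_{\T}(-,\ml\me(T))$ and $\mi_{\mathsf{coh}}\mq_{\mathsf{coh}}(F)\cong\Hom_{\T}(-,\mi\mq(T))$, yielding the exact sequence $(\ref{exactcoherentseq})$ whose two maps are the Yoneda images of the counit $\ml\me(T)\lxr T$ and the unit $T\lxr\mi\mq(T)$ of the canonical triangle $\ml\me(T)\lxr T\lxr\mi\mq(T)\lxr\ml\me(T)[1]$.

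The heart of (ii) is the transfer back to $\T$. Exactness at the right end of $(\ref{exactcoherentseq})$ says that $\Hom_{\T}(X,T)\lxr\Hom_{\T}(X,\mi\mq(T))$ is surjective for every $X$, so in the long exact sequence obtained by applying $\Hom_{\T}(X,-)$ to the canonical triangle the relevant connecting homomorphism vanishes; hence composition with $\mi\mq(T)\lxr\ml\me(T)[1]$ is zero for all $X$, and Yoneda's lemma forces the map $\mi\mq(T)\lxr\ml\me(T)[1]$ itself to be zero, so the triangle splits. The dual argument, starting from the left-exact part $0\lxr\mi_{\mathsf{coh}}\map_{\mathsf{coh}}(F)\lxr F\lxr\mr_{\mathsf{coh}}\me_{\mathsf{coh}}(F)$ of the second canonical sequence, splits $\mi\map(T)\lxr T\lxr\mr\me(T)\lxr\mi\map(T)[1]$. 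Both families of canonical triangles splitting for every $T$ is precisely the statement that $\mathsf{R}_{\mathsf{tr}}(\U,\T,\V)$ splits. I expect this Yoneda transfer to be the main obstacle, being the only place where the purely formal abelianisation must feed information back into the triangulated structure.

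For (iii), a ladder of the triangulated recollement in the sense of \cite{AKLY1} is a chain of adjoint triangle functors extending $(\ml,\me,\mr)$ and realising $\ml$-height $n$ or $\mr$-height $m$. I would abelianise this chain levelwise: by the universal property of the Yoneda embedding together with \cite[Lemma~A.3]{Krause:Localizationtheory}, each adjoint pair of triangle functors induces an adjoint pair of coherent extensions, so the consecutive adjunctions $(\ml^{i+1},\ml^i)$, resp.\ $(\mr^i,\mr^{i+1})$, pass to $(\ml^{i+1}_{\mathsf{coh}},\ml^i_{\mathsf{coh}})$, resp.\ $(\mr^i_{\mathsf{coh}},\mr^{i+1}_{\mathsf{coh}})$. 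The resulting tower of adjoints is a ladder of $(\ref{recolcohfunctors})$ in the sense of Definition~\ref{defnladder}, of the same $\ml$-height $n$, resp.\ $\mr$-height $m$.
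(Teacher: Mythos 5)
Your proposal is correct and follows essentially the same route as the paper's own proof: part (i) via the reconstruction of Remark~\ref{remadjointtriplerecol}, part (ii) by evaluating the canonical exact sequences on representables and using Yoneda to kill the connecting morphisms of the canonical triangles, and part (iii) by abelianising the ladder levelwise. The only difference is that you spell out the long-exact-sequence step in (ii) slightly more explicitly than the paper does.
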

\vspace*{-1cm}
\begin{exam}
Let $\mathsf{R}_{\mathsf{tr}}(\U,\T,\V)$ be a recollement of $k$-linear triangulated categories, where $k$ is a field. Assume that $\T$ admits a Serre functor $\mathsf{S}_{\T}$ and let $\mathsf{S}_{\T}^{-1}$ its quasi-inverse. 
An example is the bounded derived category of an algebra
having finite global dimension.
Then from the recollement $\mathsf{R}_{\mathsf{tr}}(\U,\T,\V)$ it follows that $\mathsf{S}_{\T}$ induces Serre functors $\mathsf{S}_{\U}$ and $\mathsf{S}_{\V}$ in $\U$ and $\V$, respectively. Then from \cite{Jorgensen} there is an infinite ladder
\begin{minipage}{0.7\textwidth}
\vspace*{-0.5cm}
\[
\xymatrix@C=0.5cm{
\U  \ar @/^3.0pc/[rrr]^{\mq^1} \ar @/_3.0pc/[rrr]_{\vdots}^{\map^1} \ar[rrr]^{\mi} &&& \T   \ar @/^3.0pc/[rrr]^{\ml^1} \ar @/_4.5pc/[lll]_{\vdots}  \ar[rrr]^{\me } \ar @/_1.5pc/[lll]_{\mq} \ar @/_3.0pc/[rrr]_{\vdots }^{\mr^1}  \ar @/^1.5pc/[lll]_{\map} &&& \V
\ar @/_1.5pc/[lll]_{\ml} \ar @/_4.5pc/[lll]_{\vdots} \ar
 @/^1.5pc/[lll]_{\mr}   } 
\]
\end{minipage}
\begin{minipage}{0.3\textwidth}
where $\mq^1 = \mathsf{S}_{\T}^{-1}\circ \mi\circ \mathsf{S}_{\U}$, $\ml^{1} = \mathsf{S}_{\V}^{-1}\circ\me\circ \mathsf{S}_{\T}$, $\map^1 = \mathsf{S}_{\T}\circ \mi\circ \mathsf{S}_{\U}^{-1}$, $\mr^{1} = \mathsf{S}_{\V}\circ\me\circ \mathsf{S}_{\T}^{-1}$ and the remaining functors are defined similarly. Then Proposition~\ref{proprecocoherentfunctors} implies the following recollement of abelian categories of infinite ladder$\colon$
\end{minipage}
\vspace*{-1cm}
\[
\xymatrix@C=0.5cm{
 &&& \ \ \ \ \ \ \ \ \ \ \ \ \ \ \ \ \ \ \ \ \ \ \ \ \  \ \ \  \ \  \ \ \ \ \ \ \ \ \ \ \ \ \vdots &&& \\
 &&&  &&& \\
\Ker\mathsf{e}_{\mathsf{coh}} \ar[rrr]^{\inc} &&& \smod\T \ar @/_3.0pc/[rrr]^{\mathsf{r}^1_{\mathsf{coh}}} \ar[rrr]^{\mathsf{e}_{\mathsf{coh}}} \ar @/^3.0pc/[rrr]^{\mathsf{l}^1_{\mathsf{coh}}}   \ar @/_1.5pc/[lll]_{}  \ar
 @/^1.5pc/[lll]^{} &&& \smod\V \ar @/^4.5pc/[lll]_{\mathsf{r}^2_{\mathsf{coh}}} \ar @/_4.5pc/[lll]_{\mathsf{l}^2_{\mathsf{coh}}} 
\ar @/_1.5pc/[lll]_{\mathsf{l}^0_{\mathsf{coh}}} \ar
 @/^1.5pc/[lll]_{\mathsf{r}^0_{\mathsf{coh}}} \\
&&& &&& \\
 &&& \ \ \ \ \ \ \ \ \ \ \ \ \ \ \ \ \ \ \ \ \ \ \ \ \ \ \ \ \  \ \ \  \ \  \ \ \ \ \ \ \ \ \vdots &&& \\ } 
 \]
\end{exam}

\section{Deriving recollements with ladders}

Given a recollement of abelian categories $(\A,\B,\C)$ with ladders,
recollements of
triangulated categories can be constructed that involve derived categories or
singularity categories of the given abelian categories. This proves
in particular Theorem A.

In the sequel, we need the following standard lemma, see for instance \cite{Maltsiniotis}.

\begin{lem}
\label{exactadjoint}
Let $\A$ and $\B$ be abelian categories and assume that there is an adjoint pair of exact functors $(F,G)$, i.e. $\xymatrix{
F\colon \A  \ar@<-.7ex>[r]_-{} & \B\colon G \ar@<-.7ex>[l]_-{} }$, between $\A$ and $\B$. Then there is an adjoint pair $(F,G)$
between the unbounded derived categories of $\A$ and $\B$ which restricts
to the bounded derived categories. Moreover, if $G\colon \B\lxr \A$ is fully faithful, then the induced functor $G\colon \mathsf{D}(\B)\lxr \mathsf{D}(\A)$ is also fully faithful.
\end{lem}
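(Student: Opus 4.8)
The plan is to show that both functors descend to the derived categories \emph{without} needing to be derived, because exactness forces them to preserve quasi-isomorphisms, and then to transport the abelian unit and counit along with the triangle identities.

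First I would record that an exact functor commutes with cohomology: for $F$ exact and a complex $X^{\bullet}$ in $\A$ one has $\mh^{n}(F(X^{\bullet}))\iso F(\mh^{n}(X^{\bullet}))$ for all $n$, since $F$ preserves kernels, images and cokernels. Consequently $F$ sends acyclic complexes to acyclic complexes, and being additive it commutes with mapping cones on the homotopy category $\mathsf{K}(\A)$; as a morphism is a quasi-isomorphism precisely when its cone is acyclic, $F$ preserves quasi-isomorphisms. Hence the termwise functor descends to a triangle functor $F\colon \mathsf{D}(\A)\lxr \mathsf{D}(\B)$, and the same argument applies to $G$. Both act termwise, so they preserve bounded (as well as bounded above and bounded below) complexes and thus restrict to the corresponding bounded derived categories, using that the canonical functors $\mathsf{D}^{b}(\A)\lxr \mathsf{D}(\A)$ are fully faithful.

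Next I would transport the adjunction. Let $\eta\colon \iden_{\A}\lxr GF$ and $\epsilon\colon FG\lxr \iden_{\B}$ be the unit and counit of the abelian adjunction $(F,G)$. Applying these termwise to a complex yields morphisms of complexes, natural in the complex, hence natural transformations $\iden_{\mathsf{D}(\A)}\lxr GF$ and $FG\lxr \iden_{\mathsf{D}(\B)}$ on the derived categories. The triangle identities $(\epsilon F)\circ (F\eta)=\iden_{F}$ and $(G\epsilon)\circ (\eta G)=\iden_{G}$ hold for every object of $\A$, resp. $\B$, so they hold termwise on complexes and therefore persist after passing to $\mathsf{K}$ and localising to $\mathsf{D}$. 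This exhibits $(F,G)$ as an adjoint pair on the unbounded derived categories, i.e. a natural isomorphism $\Hom_{\mathsf{D}(\B)}(F(X^{\bullet}),Y^{\bullet})\iso \Hom_{\mathsf{D}(\A)}(X^{\bullet},G(Y^{\bullet}))$. Since the unit and counit are termwise and preserve boundedness, and $\mathsf{D}^{b}\lxr \mathsf{D}$ is fully faithful, this adjunction restricts to the bounded derived categories.

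Finally, for the fully faithful claim, recall that in any adjunction $(F,G)$ the right adjoint $G$ is fully faithful if and only if the counit $\epsilon\colon FG\lxr \iden$ is a natural isomorphism. By hypothesis $G\colon \B\lxr \A$ is fully faithful, so the abelian counit is an isomorphism; applied termwise it is an isomorphism of complexes, in particular a quasi-isomorphism, hence an isomorphism in $\mathsf{D}(\B)$. Thus the counit of the derived adjunction is a natural isomorphism, and therefore $G\colon \mathsf{D}(\B)\lxr \mathsf{D}(\A)$ is fully faithful. The only point requiring care in the whole argument is checking that the triangle identities survive localisation, which is immediate here precisely because every piece of structure is induced termwise from the abelian level; there is no genuine obstacle.
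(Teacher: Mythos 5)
Your proof is correct. The paper itself gives no argument for this lemma: it is stated as a standard fact with a pointer to Maltsiniotis's note on deriving adjunctions, which treats the general situation where the functors must actually be derived via resolutions. Your route exploits the exactness hypothesis to avoid all of that machinery: since $F$ and $G$ commute with cohomology they preserve quasi-isomorphisms, so they descend termwise through the Verdier localisation, and the unit, counit and triangle identities descend with them by the ($2$-categorical) universal property of localisation; the full faithfulness of the induced $G$ then follows from the descended counit being a termwise isomorphism of complexes. This is exactly the elementary argument one expects here, and every step you flag (preservation of quasi-isomorphisms, survival of the triangle identities, the counit criterion for full faithfulness of a right adjoint) is handled correctly. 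The only remark worth adding is that the restriction to $\mathsf{D}^{b}$ can also be phrased intrinsically, via complexes with bounded cohomology, using that $F$ and $G$ commute with $\mh^{n}$; but your version via bounded complexes and the full faithfulness of $\mathsf{D}^{b}(\A)\lxr \mathsf{D}(\A)$ is equally valid.
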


Let $\T$ be a triangulated category. Given a triangulated subcategory $\X$ of
$\T$, the Verdier quotient $\T/\X$ is known to be a triangulated category, and
there is a quotient functor $q\colon \T\lxr \T/\X$. Let $T$ be an object
of $\T$. Then $q(T)\iso 0$ if and only if $T$ is a direct summand of an object in $\X$. When $\X$ is a thick triangulated subcategory of $\T$, the kernel $\Ker{q}$ of $q$ coincides with $\X$, that is, $0\lxr \X\lxr \T\lxr \T/\X\lxr 0$ is an exact sequence of triangulated categories.

\begin{lem}
\label{lempropertiestrianglefunctors}
Let $F\colon \T\lxr \mathcal{S}$ be a triangle functor between triangulated categories which has a right adjoint functor $G\colon \mathcal{S}\lxr \T$.
\begin{enumerate}
\item \textnormal{(\!\!\cite[Proposition 1.5 and 1.6]{BK})} Assume that the functor $G$ is fully faithful. Then the functor $F$ induces a triangle equivalence between the Verdier quotient $\T/\Ker{F}$ and $\mathcal{S}$.

\item \textnormal{(\!\!\cite[Lemma 2.1]{Or})} Let $\X$ and $\Y$ be triangulated subcategories of $\T$ and $\mathcal{S}$, respectively, such that $F(\X)\subseteq \Y$ and $G(\Y)\subseteq \X$. Then $F$ induces a triangle functor $\T/\X\lxr \mathcal{S}/\Y$  and $G$ induces a right adjoint ${mathcal{S}/\Y\lxr \T/\X}$. If $G$ is fully faithful, then the induced right adjoint $\mathcal{S}/\Y\lxr \T/\X$ is also fully faithful.

\item \textnormal{(\!\!\!\cite[Section 2.1]{CPS}, \cite[Section 2]{M})} Assume that the functor $F$ is fully faithful and has a left adjoint $H\colon \mathcal{S}\lxr \T$, i.e.\ $(H,F,G)$ is an adjoint triple. Then there is a recollement of triangulated categories$\colon$
\[
\xymatrix@C=0.5cm{
 \T \ar[rrr]^{F} &&& \mathcal{S} \ar[rrr]^{}  \ar @/_1.5pc/[lll]_{H}  \ar
 @/^1.5pc/[lll]^{G} &&& \mathcal{S}/\Image{F}
\ar @/_1.5pc/[lll]_{} \ar
 @/^1.5pc/[lll]^{}
 }
\]
such that $\Ker{H}={^{\bot}(\Image{F})}\simeq \mathcal{S}/\Image{F}\simeq (\Image{F})^{\bot}=\Ker{G}$.
\end{enumerate}
\end{lem}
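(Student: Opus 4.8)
These three statements are standard facts from the theory of Verdier localisation and Bousfield (co)localisation, and I would prove them following the cited works of Bökstedt--Neeman, Orlov, Cline--Parshall--Scott and Miyachi; the unifying mechanism is that a fully faithful adjoint is detected by invertibility of the corresponding unit or counit, and the only tool needed beyond this is the universal property of the Verdier quotient. For (i), I would first record that since $G$ is fully faithful the counit $\epsilon\colon FG\lxr \iden_{\mathcal{S}}$ is a natural isomorphism, whence $F$ is essentially surjective. As $\Ker F$ is a thick triangulated subcategory, the universal property yields a unique triangle functor $\bar F\colon \T/\Ker F\lxr \mathcal{S}$ with $\bar F q=F$, where $q\colon \T\lxr \T/\Ker F$ is the quotient, and essential surjectivity passes to $\bar F$. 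The crux is full faithfulness: for the unit $\eta\colon \iden_{\T}\lxr GF$, the triangle identities together with invertibility of $\epsilon$ force $F(\eta_Y)$ to be an isomorphism, so the cone of $\eta_Y$ lies in $\Ker F$ and $q(\eta_Y)$ is invertible; moreover every object $GS$ is right-orthogonal to $\Ker F$, since $\Hom_{\T}(Z,GS)\iso \Hom_{\mathcal{S}}(FZ,S)=0$ for $Z\in \Ker F$, so morphisms in $\T/\Ker F$ with target $q(GFY)$ coincide with morphisms in $\T$. Combining these with the adjunction isomorphism $\Hom_{\mathcal{S}}(FX,FY)\iso \Hom_{\T}(X,GFY)$ gives $\Hom_{\T/\Ker F}(qX,qY)\iso \Hom_{\mathcal{S}}(FX,FY)$, as required.

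For (ii), the inclusions $F(\X)\subseteq \Y$ and $G(\Y)\subseteq \X$ mean that $q_{\mathcal{S}}\circ F$ and $q_{\T}\circ G$ annihilate $\X$ and $\Y$ respectively, so the universal property produces triangle functors $\bar F\colon \T/\X\lxr \mathcal{S}/\Y$ and $\bar G\colon \mathcal{S}/\Y\lxr \T/\X$. The unit $\eta$ and counit $\epsilon$ of $(F,G)$ are natural transformations and therefore descend to natural transformations between the composites of the induced functors; since the triangle identities are equalities of natural transformations, they descend as well, so that $(\bar F,\bar G)$ is an adjoint pair. When $G$ is fully faithful, $\epsilon$ is invertible, and because the quotient functor sends isomorphisms to isomorphisms the descended counit $\bar F\bar G\lxr \iden_{\mathcal{S}/\Y}$ is again invertible on each object $q_{\mathcal{S}}(S)$; as these exhaust $\mathcal{S}/\Y$, the functor $\bar G$ is fully faithful.

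For (iii), since $F$ is fully faithful and possesses both a left adjoint $H$ and a right adjoint $G$, its essential image $\Image F$ is a thick triangulated subcategory and $F$ realises $\T$ simultaneously as a localisation and a colocalisation inside $\mathcal{S}$. The general theory of Bousfield (co)localisation then ensures that the Verdier quotient functor $\mathcal{S}\lxr \mathcal{S}/\Image F$ admits both a left and a right adjoint, which supply the right-hand column of the recollement, while $H$ and $G$ are the outer adjoints of $F$; one verifies the defining conditions of a recollement (the adjunctions, full faithfulness of $F$ and of the two section functors, and $\Image F=\Ker(\mathcal{S}\to \mathcal{S}/\Image F)$). The identifications $\Ker H={}^{\bot}(\Image F)$ and $\Ker G=(\Image F)^{\bot}$ follow directly from the adjunction isomorphisms $\Hom_{\T}(HS,T)\iso \Hom_{\mathcal{S}}(S,FT)$ and $\Hom_{\T}(T,GS)\iso \Hom_{\mathcal{S}}(FT,S)$ by testing on $T=HS$, respectively $T=GS$, and the equivalences of both orthogonals with $\mathcal{S}/\Image F$ are the standard statement that the quotient functor restricts to a triangle equivalence on each orthogonal complement of a (co)localising subcategory.

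I expect the main obstacle to be the full faithfulness argument in (i): it requires matching the calculus of fractions in the Verdier quotient with the observation that the objects of $\Image G$ are local, that is, right-orthogonal to $\Ker F$, and this is precisely the point where the adjunction data and the quotient structure must be aligned. Once (i) is in place, parts (ii) and (iii) are largely formal, consisting of descents of units and counits along quotient functors together with the orthogonality computations from the adjunction isomorphisms.
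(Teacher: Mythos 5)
Your proposal is correct and follows exactly the standard arguments that the paper relies on: the paper gives no proof of this lemma, citing instead Bondal--Kapranov for (i), Orlov for (ii), and Cline--Parshall--Scott and Miyachi for (iii), and your reconstruction (counit/unit invertibility detecting full faithfulness, the universal property of the Verdier quotient for descending functors and adjunctions, and the orthogonality computations via the adjunction isomorphisms) is precisely the content of those references. No discrepancy with the paper's approach.
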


\begin{lem}
\label{lemgettingrecolVerdierquotient}
Let $(\U,\T,\V)$ be a recollement of triangulated categories
\[
\xymatrix@C=0.5cm{
\U \ar[rrr]^{\mathsf{i}} &&& \T \ar[rrr]^{\mathsf{e}}  \ar @/_1.5pc/[lll]_{\mathsf{q}}  \ar
 @/^1.5pc/[lll]^{\mathsf{p}} &&& \V
\ar @/_1.5pc/[lll]_{\mathsf{l}} \ar
 @/^1.5pc/[lll]^{\mathsf{r}}
 }
 \]
Assume that $\X$, respectively $\Y$, is a triangulated subcategory of $\T$, respectively $\V$, such that $\ml(\Y)\subseteq \X$, $\me(\X)\subseteq \Y$ and $\mr(\Y)\subseteq \X$. Then there exists a recollement of triangulated categories
\[
\xymatrix@C=0.5cm{
\Ker{\me} \ar[rrr]^{} &&& \T/\X \ar[rrr]^{\mathsf{e}}  \ar @/_1.5pc/[lll]_{}  \ar
 @/^1.5pc/[lll]^{} &&& \V/\Y
\ar @/_1.5pc/[lll]_{\mathsf{l}} \ar
 @/^1.5pc/[lll]^{\mathsf{r}}
 }
\]
\begin{proof} Lemma~\ref{lempropertiestrianglefunctors} (ii) implies that $(\ml, \me, \mr)$ induces an adjoint triple
$(\ml, \me, \mr)$ between $\T/\X$ and  $\V/\Y$ with $\mr$ is fully faithful. Since $\me$ is a Verdier quotient functor, it follows that $\ml$ is fully faithful.
\end{proof}
\end{lem}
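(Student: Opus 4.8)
The plan is to transport the right-hand adjoint triple $(\ml,\me,\mr)$ of the given recollement to the Verdier quotients by two applications of Lemma~\ref{lempropertiestrianglefunctors}(ii), and then to recognise the resulting data as the right half of a recollement. First I would apply Lemma~\ref{lempropertiestrianglefunctors}(ii) to the adjoint pair $(\me,\mr)$, taking $F=\me\colon\T\lxr\V$ and $G=\mr\colon\V\lxr\T$; the hypotheses it requires are $\me(\X)\subseteq\Y$ and $\mr(\Y)\subseteq\X$, which are two of the three assumed inclusions. This makes $\me$ descend to a triangle functor $\T/\X\lxr\V/\Y$ with a right adjoint induced by $\mr$, and since $\mr$ is fully faithful on the nose, the induced $\mr\colon\V/\Y\lxr\T/\X$ is fully faithful too. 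Next I would apply the same lemma to the pair $(\ml,\me)$, this time with $F=\ml\colon\V\lxr\T$ and $G=\me\colon\T\lxr\V$; here the two inclusions demanded are exactly the remaining assumptions $\ml(\Y)\subseteq\X$ and $\me(\X)\subseteq\Y$. This produces an induced $\ml\colon\V/\Y\lxr\T/\X$ with right adjoint the descended $\me$. Since both applications use the same induced $\me$, the two adjunctions assemble into an adjoint triple $(\ml,\me,\mr)$ between $\T/\X$ and $\V/\Y$.

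It remains to supply the full faithfulness that a recollement requires, and here the broken symmetry is visible: Lemma~\ref{lempropertiestrianglefunctors}(ii) hands us full faithfulness of the \emph{right} adjoint $\mr$ for free, whereas the corresponding property of $\ml$ must be extracted separately, and this is the step I expect to be the real obstacle. I would first note that $\me\colon\T/\X\lxr\V/\Y$ is a Verdier quotient functor: its right adjoint $\mr$ is fully faithful, so Lemma~\ref{lempropertiestrianglefunctors}(i) gives a triangle equivalence $(\T/\X)/\Ker\me\simeq\V/\Y$ induced by $\me$. The left adjoint of a Verdier quotient functor is automatically fully faithful; concretely, the unit $\iden_{\V}\lxr\me\ml$ of the original recollement is an isomorphism (as $\ml$ is fully faithful there), and the unit of the induced adjunction $(\ml,\me)$ is the image of this isomorphism under the localisation functor $\V\lxr\V/\Y$, hence an isomorphism on every object of $\V/\Y$ (each of which is isomorphic to one coming from $\V$). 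Therefore the induced $\ml\colon\V/\Y\lxr\T/\X$ is fully faithful.

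Finally I would assemble the recollement. The data now at hand—an adjoint triple $(\ml,\me,\mr)$ in which $\me$ is a Verdier quotient functor and both $\ml$ and $\mr$ are fully faithful—is precisely the right half of a recollement of triangulated categories, so the standard gluing construction (the triangulated analogue of Remark~\ref{remadjointtriplerecol}, which produces the inclusion of the left term and its two adjoints from the canonical triangles associated with the unit and counit of $\me$) yields the inclusion of $\Ker\me$ together with its left and right adjoints. This gives the asserted recollement $(\Ker\me,\T/\X,\V/\Y)$. Apart from the full faithfulness of $\ml$, the argument is bookkeeping: matching the three inclusion hypotheses to those of Lemma~\ref{lempropertiestrianglefunctors}(ii) and tracking the induced adjunctions.
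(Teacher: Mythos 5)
Your proposal is correct and follows essentially the same route as the paper: both obtain the induced adjoint triple $(\ml,\me,\mr)$ between $\T/\X$ and $\V/\Y$ from Lemma~\ref{lempropertiestrianglefunctors}(ii) using the three inclusion hypotheses, deduce full faithfulness of the induced $\mr$ from that lemma and of the induced $\ml$ from the fact that the induced $\me$ is a Verdier quotient functor, and then invoke the standard construction of the left half of the recollement. Your write-up merely makes explicit (via the unit isomorphism descending to the quotient) what the paper compresses into the sentence ``since $\me$ is a Verdier quotient functor, it follows that $\ml$ is fully faithful.''
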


Before we produce a recollement of triangulated categories, involving derived or singularity categories, we need to recall the notion of torsion pairs in abelian categories. Let $\A$ be an abelian category. A torsion pair in $\A$ is a pair $(\X,\Y)$ of full subcategories such that (i) $\Hom_{\A}(\X,\Y)=0$, and (ii) for all objects $A\in \A$, there is an exact sequence $0 \lxr X_A \lxr A \lxr Y^A \lxr 0$
where $X_A\in \X$ and $Y^A\in \Y$.

\begin{prop}
\label{lempropertiesladder}
Let $(\A,\B,\C)$ be a recollement of abelian categories.
\begin{enumerate}
\item Assume that $\B$ is a cocomplete abelian category (that is, small
coproducts exist) and that the recollement $(\A,\B,\C)$ admits a ladder of $\ml$-height two. Then the pair $(\Ker{\ml^1},(\Ker{\ml^1})^{\bot})$ is a torsion pair in $\B$.

\item  Assume that the recollement $(\A,\B,\C)$ admits a ladder of $\ml$-height three. Then there is a recollement of abelian categories
\[
\xymatrix@C=0.5cm{
\Ker{\ml^1} \ar[rrr]^{} &&& \B \ar[rrr]^{\ml^1}  \ar @/_1.5pc/[lll]_{}  \ar
 @/^1.5pc/[lll]^{} &&& \C
\ar @/_1.5pc/[lll]_{\mathsf{l}^2} \ar
 @/^1.5pc/[lll]^{\ml^0}
 }
\]
\noindent and $(\ml^1, \ml^0, \me)$ induces a recollement of triangulated categories
\[
\xymatrix@C=0.5cm{
\mathsf{D}(\C) \ar[rrr]^{\ml^0} &&& \mathsf{D}(\B)\ar[rrr]^{} \ar @/_1.5pc/[lll]_{\ml^1}  \ar @/^1.5pc/[lll]^{\me} &&& \mathsf{D}_{\A}(\B)
\ar @/_1.5pc/[lll]_{} \ar
 @/^1.5pc/[lll]_{}
 }
\]
which restricts to the bounded derived categories.

\item Assume that $\B$ is a complete abelian category (that is, small products
exist) and that the recollement $(\A,\B,\C)$ admits a ladder of $\mr$-height two. Then the pair $({^{\bot}(\Ker{\mr^1})},\Ker{\mr^1})$ is a torsion pair in $\B$.

\item  Assume that the recollement $(\A,\B,\C)$ admits a ladder of $\mr$-height three. Then there is a recollement of abelian categories
\[
\xymatrix@C=0.5cm{
\Ker{\mr^1} \ar[rrr]^{} &&& \B \ar[rrr]^{\mr^1}  \ar @/_1.5pc/[lll]_{}  \ar
 @/^1.5pc/[lll]^{} &&& \C
\ar @/_1.5pc/[lll]_{\mathsf{r}^0} \ar
 @/^1.5pc/[lll]^{\mr^2}
 }
\]
\noindent and  $(\me, \mr^0, \mr^1)$ induces a recollement of triangulated categories
\[
\xymatrix@C=0.5cm{
\mathsf{D}(\C) \ar[rrr]^{\mr^0} &&& \mathsf{D}(\B)\ar[rrr]^{} \ar @/_1.5pc/[lll]_{\me}  \ar @/^1.5pc/[lll]^{\mr^1} &&& \mathsf{D}_{\A}(\B)
\ar @/_1.5pc/[lll]_{} \ar
 @/^1.5pc/[lll]_{}
 }
\]
which restricts to the bounded derived categories.
\end{enumerate}
\begin{proof}
(i) It suffices to show that $\Ker{\ml^1}$ is a torsion class, i.e. it is closed under quotient objects, coproducts and extensions. Since $(\ml^1,\ml^{0})$ is an adjoint pair, the functor $\ml^1$ is right exact and therefore $\Ker{\ml^1}$ is closed under quotient objects. Assume that $0\lxr B_1\lxr B\lxr B_2\lxr 0$ is an exact sequence in $\B$ with $B_1, B_2$ in $\Ker{\ml^1}$. Applying $\ml^1$ shows that $B$ also lies in $\Ker{\ml^1}$, that is, $\Ker{\ml^1}$ is closed under extensions. Let $B_i, i\in I$, be a family of objects in $\B$ which lie in $\Ker{\ml^1}$. Since $\ml^1$ is a left adjoint, it preserves coproducts and therefore $\ml^1(\amalg_{i\in I} B_i)\iso \amalg_{i\in I}\ml^1(B_i)=0$. Hence, $\Ker{\ml^1}$ is closed under coproducts. We infer that $(\Ker{\ml^1},(\Ker{\ml^1})^{\bot})$ is a torsion pair.

(ii) Since there is the adjoint triple $(\ml^2,\ml^1,\ml^0)$ and $\ml^0$ is fully faithful, Remark~\ref{remdefrecol} implies the existence of a recollement $(\Ker{\ml^1},\B,\C)$ of abelian categories.

By Lemma~\ref{exactadjoint}, the exact adjoint triple  $(\ml^1, \ml^0, \me)$ between $\B$ and $\C$ induces an adjoint triple $(\ml^1, \ml^0, \me)$ between $\mathsf{D}(\B)$ and $\mathsf{D}(\C)$. Since there is an exact sequence $0\lxr\A\stackrel{\mi}{\lxr}\B\stackrel{e}{\lxr}\C\lxr 0$, $\Ker\me$ is equivalent to $\mathsf{D}_{\A}(\B)$, by \cite[Theorem 3.2]{M} .
By Lemma~\ref{lempropertiestrianglefunctors} there exists a recollement of triangulated categories
\[
\xymatrix@C=0.5cm{
\mathsf{D}(\C) \ar[rrr]^{\ml^0} &&& \mathsf{D}(\B)\ar[rrr]^{} \ar @/_1.5pc/[lll]_{\ml^1}  \ar @/^1.5pc/[lll]^{\me} &&& \mathsf{D}_{\A}(\B)
\ar @/_1.5pc/[lll]_{} \ar
 @/^1.5pc/[lll]_{}
 }
\]

Part (iii) and (iv) follow dually.
\end{proof}
\end{prop}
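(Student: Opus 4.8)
The plan is to treat the $\ml$-statements (i)--(ii) directly and to obtain (iii)--(iv) by the evident dualisation, replacing left adjoints by right adjoints, quotient objects by subobjects, coproducts by products, and right-exactness by left-exactness.

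For (i) I would prove that $\Ker{\ml^1}$ is a torsion class, that is, closed under quotient objects, coproducts and extensions; the standard characterisation of torsion classes in a cocomplete abelian category (the torsion radical $t(B)$ being the sum of all subobjects of $B$ lying in $\Ker{\ml^1}$) then yields that $(\Ker{\ml^1},(\Ker{\ml^1})^{\bot})$ is a torsion pair. Each closure property is a formal consequence of $\ml^1$ being a left adjoint: since $(\ml^1,\ml^0)$ is an adjoint pair, $\ml^1$ is right exact, hence carries epimorphisms to epimorphisms (closure under quotients) and preserves coproducts (closure under coproducts); and applying the right exact $\ml^1$ to a short exact sequence $0\lxr B_1\lxr B\lxr B_2\lxr 0$ with $B_1,B_2\in\Ker{\ml^1}$ forces $\ml^1(B)=0$ (closure under extensions). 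The cocompleteness hypothesis enters exactly to make the formation of the torsion radical legitimate.

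For (ii) the crucial observation is that the height-three hypothesis upgrades $\ml^1$ from merely right exact to \emph{exact}: the adjoint triple $(\ml^2,\ml^1,\ml^0)$ exhibits $\ml^1$ simultaneously as a left adjoint, hence right exact, and as a right adjoint of $\ml^2$, hence left exact. Since in addition $\ml^0$ is fully faithful, Remark~\ref{remadjointtriplerecol} applied to the exact functor $\ml^1\colon\B\lxr\C$ produces the abelian recollement $(\Ker{\ml^1},\B,\C)$. For the triangulated statement I would note that $(\ml^1,\ml^0,\me)$ is an adjoint triple of exact functors, where $\me$ is exact by Remark~\ref{remdefrecol} and $\ml^0=\ml$ is exact as it is at once a left and a right adjoint, so Lemma~\ref{exactadjoint} lifts it to an adjoint triple $(\ml^1,\ml^0,\me)$ on the bounded and unbounded derived categories, with the derived $\ml^0$ still fully faithful. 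Lemma~\ref{lempropertiestrianglefunctors}(iii), taken with $F=\ml^0$, $H=\ml^1$ and $G=\me$, then delivers a recollement whose outer term is $\mathsf{D}(\B)/\Image{\ml^0}\simeq\Ker{\me}$. It remains to identify $\Ker{\me}$ inside $\mathsf{D}(\B)$ with $\mathsf{D}_{\A}(\B)$: because $\me$ is exact it commutes with cohomology, so $\me(X^{\bullet})$ is acyclic exactly when every $\mh^i(X^{\bullet})$ lies in $\Ker{\me}=\mi(\A)$, which is the content of \cite[Theorem 3.2]{M}.

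Parts (iii) and (iv) follow by the dual argument: for (iii) one shows $\Ker{\mr^1}$ is a torsion-free class, closed under subobjects, products and extensions, using that $\mr^1$ is left exact and preserves products; for (iv) one uses that $\mr^1$ is exact, now as both a right and a left adjoint in the triple $(\mr^0,\mr^1,\mr^2)$, together with the exact adjoint triple $(\me,\mr^0,\mr^1)$. I expect the main obstacle to be not any single computation but the bookkeeping of exactness: recognising that the height-three hypothesis is precisely what promotes the relevant middle functor to an exact functor, which is what simultaneously feeds Remark~\ref{remadjointtriplerecol}, needing an exact middle term, and Lemma~\ref{exactadjoint}, needing exact adjoints to lift to derived categories, and then matching the abstract Verdier quotient with the concrete subcategory $\mathsf{D}_{\A}(\B)$.
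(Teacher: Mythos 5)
Your proposal is correct and follows essentially the same route as the paper: part (i) via the three closure properties of $\Ker{\ml^1}$ coming from $\ml^1$ being a left adjoint, and part (ii) via the observation that the adjoint triple $(\ml^2,\ml^1,\ml^0)$ makes $\ml^1$ exact, feeding Remark~\ref{remadjointtriplerecol} for the abelian recollement and Lemma~\ref{exactadjoint} together with Lemma~\ref{lempropertiestrianglefunctors} and \cite[Theorem~3.2]{M} for the derived one, with (iii)--(iv) dual. The extra detail you supply (the torsion radical justification and the explicit identification of $F$, $H$, $G$) only makes explicit what the paper leaves implicit.
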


The  main result of this section contains Theorem~\ref{derivedcat} of the Introduction:

\begin{thm}
\label{mainthmsingladder}
Let $(\A,\B,\C)$ be a recollement of abelian categories.
\begin{enumerate}
\item Assume that $(\A,\B,\C)$ has a ladder of $\ml$-height three. Then there exists a triangle equivalence
\[
\xymatrix{
  \mathsf{D}_{\textsf{sg}}(\B)/\Ker{\ml^1} \ \ar[r]^{ \  \ \ \simeq} & \
  \mathsf{D}_{\textsf{sg}}(\C) }
\]
and  a recollement of triangulated categories
\[
\xymatrix@C=0.5cm{
 \mathsf{D}(\C)\ar[rrr]^{\ml^0} &&& \mathsf{D}(\B) \ar[rrr]^{}  \ar @/_1.5pc/[lll]_{\mathsf{l}^1}  \ar
 @/^1.5pc/[lll]^{\me} &&& \mathsf{D}_{\A}(\B)
\ar @/_1.5pc/[lll]_{} \ar
 @/^1.5pc/[lll]^{}
 }
\]
which restricts to the bounded derived categories.

\item Assume that $(\A,\B,\C)$ has a ladder of $\ml$-height three and $\mr$-height two. Then $(\ml^1, \ml^0, \me)$ induces an adjoint triple between $\mathsf{D}_{\textsf{sg}}(\B)$ and $\mathsf{D}_{\textsf{sg}}(\C)$ and there exists
a recollement of triangulated categories
\[
\xymatrix@C=0.5cm{
 \mathsf{D}_{\textsf{sg}}(\C)\ar[rrr]^{\ml^0} &&& \mathsf{D}_{\textsf{sg}}(\B) \ar[rrr]^{}  \ar @/_1.5pc/[lll]_{\mathsf{l}^1}  \ar
 @/^1.5pc/[lll]^{\me} &&& \Ker{\ml^1}
\ar @/_1.5pc/[lll]_{} \ar
 @/^1.5pc/[lll]^{}
 }
\]

\item Assume that $(\A,\B,\C)$ has a ladder of $\ml$-height four. Then
there exists a recollement of triangulated categories
\[
\xymatrix@C=0.5cm{
\Ker{\ml^1} \ar[rrr]^{} &&& \mathsf{D}_{\textsf{sg}}(\B) \ar[rrr]^{\ml^1}  \ar @/_1.5pc/[lll]_{}  \ar
 @/^1.5pc/[lll]^{} &&& \mathsf{D}_{\textsf{sg}}(\C)
\ar @/_1.5pc/[lll]_{\mathsf{l}^2} \ar
 @/^1.5pc/[lll]^{\ml^0}
 }
\]
If the given ladder has $\mr$-height two, then the recollement  
$(\Ker{\ml^1}, \mathsf{D}_{\textsf{sg}}(\B), \mathsf{D}_{\textsf{sg}}(\C))$ has a 
ladder of height two.

\item Assume that $(\A,\B,\C)$ has a ladder of $\mr$-height three. Then there exists a triangle equivalence
\[
\xymatrix{
  \mathsf{D}_{\textsf{sg}}(\B)/\Ker{\me} \ \ar[r]^{ \  \ \ \simeq} & \
  \mathsf{D}_{\textsf{sg}}(\C) }
\]
\end{enumerate}
\end{thm}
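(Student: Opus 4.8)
The plan is to realise the asserted equivalence through the exact functor $\me\colon\B\lxr\C$, after descending it to the singularity categories, and then to invoke Lemma~\ref{lempropertiestrianglefunctors}(i). First I would record the chain of adjoints that is available. The recollement supplies $\ml\dashv\me\dashv\mr^0$, and a ladder of $\mr$-height three extends this to
\[
\ml\dashv\me\dashv\mr^0\dashv\mr^1\dashv\mr^2.
\]
In particular $\mr^0$ has both a left adjoint $\me$ and a right adjoint $\mr^1$, while $\mr^1$ has both a left adjoint $\mr^0$ and a right adjoint $\mr^2$; hence $\mr^0$ and $\mr^1$ are exact, and $\me$ is exact by Remark~\ref{remdefrecol}(i). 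Moreover $\mr^0$ is fully faithful, being the functor $\mr$ of the recollement.

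The key homological point is that both $\me$ and $\mr^0$ preserve projective objects: $\me$ is left adjoint to the exact functor $\mr^0$, and $\mr^0$ is left adjoint to the exact functor $\mr^1$. This is precisely where $\mr$-height three is needed rather than two, since the third rung is what forces $\mr^1$ to be exact and hence $\mr^0$ to preserve projectives. Consequently both of these exact functors carry $\mathsf{K}^{b}(\Proj\B)$ into $\mathsf{K}^{b}(\Proj\C)$ and vice versa, so each descends to a triangle functor between the singularity categories $\mathsf{D}_{\textsf{sg}}(\B)$ and $\mathsf{D}_{\textsf{sg}}(\C)$.

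Next I would promote the adjunction to the quotients. By Lemma~\ref{exactadjoint} the pair $(\me,\mr^0)$ is an adjoint pair of exact functors on the bounded derived categories, and since $\mr^0\colon\C\lxr\B$ is fully faithful the induced functor $\mr^0\colon\mathsf{D}^b(\C)\lxr\mathsf{D}^b(\B)$ is fully faithful. Applying Lemma~\ref{lempropertiestrianglefunctors}(ii) with $\X=\mathsf{K}^{b}(\Proj\B)$ and $\Y=\mathsf{K}^{b}(\Proj\C)$ then produces an adjoint pair $(\me,\mr^0)$ between $\mathsf{D}_{\textsf{sg}}(\B)$ and $\mathsf{D}_{\textsf{sg}}(\C)$ whose right adjoint $\mr^0$ remains fully faithful. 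Feeding this into Lemma~\ref{lempropertiestrianglefunctors}(i), with $F=\me$ and $G=\mr^0$, finally yields the triangle equivalence
\[
\mathsf{D}_{\textsf{sg}}(\B)/\Ker{\me}\ \xrightarrow{\ \simeq\ }\ \mathsf{D}_{\textsf{sg}}(\C),
\]
as claimed.

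I expect the main obstacle to be the bookkeeping that allows the functors to pass to the quotients: one must check carefully that $\me$ and $\mr^0$ preserve the relevant subcategory of perfect complexes (equivalently $\mathsf{K}^{b}(\Proj)$) and that the counit of $(\me,\mr^0)$ stays invertible after quotienting, so that the fully faithfulness hypothesis of Lemma~\ref{lempropertiestrianglefunctors}(i) genuinely holds on the singularity categories. By contrast the conceptual content is concentrated in the single observation that exactness of $\mr^1$, guaranteed by the third rung of the $\mr$-ladder, forces $\mr^0$ to preserve projectives.
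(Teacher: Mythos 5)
Your argument for the last assertion (part (iv), the equivalence $\mathsf{D}_{\textsf{sg}}(\B)/\Ker{\me}\simeq\mathsf{D}_{\textsf{sg}}(\C)$ under a ladder of $\mr$-height three) is correct and is essentially the paper's own proof of that part: exactness of $\mr^0$ and $\mr^1$ from the adjoint chain, preservation of projectives by $\me$ and $\mr^0$, then Lemma~\ref{exactadjoint} to pass to bounded derived categories, Lemma~\ref{lempropertiestrianglefunctors}(ii) to descend to the Verdier quotients by $\mathsf{K}^{\mathsf{b}}(\Proj)$, and Lemma~\ref{lempropertiestrianglefunctors}(i) to conclude. Your observation about which rung of the ladder forces which exactness is exactly the mechanism the paper uses.

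The genuine gap is that this is only one of the four assertions of the theorem, and the other three are not addressed at all. Part (i) requires the dual chain $\ml^2\dashv\ml^1\dashv\ml^0\dashv\me$ to produce both the equivalence $\mathsf{D}_{\textsf{sg}}(\B)/\Ker{\ml^1}\simeq\mathsf{D}_{\textsf{sg}}(\C)$ and, separately, a full recollement of derived categories $(\mathsf{D}(\C),\mathsf{D}(\B),\mathsf{D}_{\A}(\B))$; the latter needs the identification $\Ker\me\simeq\mathsf{D}_{\A}(\B)$ via Miyachi's theorem on the exact sequence $0\to\A\to\B\to\C\to 0$, together with Lemma~\ref{lempropertiestrianglefunctors}(iii) applied to the adjoint triple $(\ml^1,\ml^0,\me)$ with $\ml^0$ fully faithful. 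Part (ii) combines the $\ml$-height-three triple with the extra hypothesis of $\mr$-height two, which is what makes $\me$ preserve projectives so that the whole triple $(\ml^1,\ml^0,\me)$ descends to the singularity categories and Lemma~\ref{lempropertiestrianglefunctors}(iii) yields the recollement with third term $\Ker\ml^1$. Part (iii) uses the $\ml$-height-four ladder to get a recollement of abelian categories $(\Ker\ml^1,\B,\C)$ with all three functors $\ml^2,\ml^1,\ml^0$ exact and projective-preserving, and then Lemma~\ref{lemgettingrecolVerdierquotient} to pass the derived recollement down to singularity categories. None of these steps is a formal consequence of your argument for (iv); in particular the recollement constructions rest on Lemma~\ref{lempropertiestrianglefunctors}(iii) and Lemma~\ref{lemgettingrecolVerdierquotient}, which you never invoke. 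As it stands the proposal proves one quarter of the statement.
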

\begin{proof}
(i) Since $(\ml^2,\ml^1)$, $(\ml^1,\ml^0)$ and $(\ml^0,\me)$ are adjoint pairs, the functors $\ml^1\colon \B\lxr \C$ and $\ml^0\colon \C\lxr \B$ are exact and preserve projective objects. So, $\ml^1(\mathsf{K}^{\mathsf{b}}(\Proj{\B}))\subseteq \mathsf{K}^{\mathsf{b}}(\Proj{\C})$ and $\ml^0(\mathsf{K}^{\mathsf{b}}(\Proj{\C}))\subseteq \mathsf{K}^{\mathsf{b}}(\Proj{\B})$. As $\me$ is exact,  Lemma~\ref{exactadjoint} implies
that $(\ml^1,\ml^0, \me)$ is an adjoint triple with  $\ml^0$ fully faithful between $\mathsf{D}(\B)$ and $\mathsf{D}(\C)$, and this triple restricts also to the bounded derived categories. By Lemma~\ref{lempropertiestrianglefunctors} (ii),
$(\ml^1,\ml^0)$ is an adjoint pair of functors between $\mathsf{D}_{\textsf{sg}}(\B)$ and $\mathsf{D}_{\textsf{sg}}(\C)$ with the induced functor $\ml^0\colon \mathsf{D}_{\textsf{sg}}(\C)\lxr \mathsf{D}_{\textsf{sg}}(\B)$ being fully faithful. Hence, Lemma~\ref{lempropertiestrianglefunctors} (i) shows that the triangulated categories $\mathsf{D}_{\textsf{sg}}(\B)/\Ker{\ml^1}$ and $\mathsf{D}_{\textsf{sg}}(\C)$ are equivalent.

By \cite[Theorem 3.2]{M}, there is an exact sequence $0\lxr \mathsf{D}_{\A}^{b}(\B)\lxr \mathsf{D}^{b}(\B)\stackrel{\me}{\lxr}\mathsf{D}^{b}(\C)\lxr 0$ of triangulated categories. Therefore, $\Ker{\me}$ is equivalent to $\mathsf{D}_{\A}^{b}(\B)$. The first part of the proof provides an adjoint triple $(\ml^1,\ml^0,\me)$ between $\mathsf{D}^{b}(\C)$ and $\mathsf{D}^{b}(\B)$, where the triangle functor $\ml^0$ is fully faithful. Then Lemma~\ref{lempropertiestrianglefunctors} (iii) gives the desired recollements.

(ii) Since the $\mr$-height of $(\A,\B,\C)$ is two, $\mr^0$ is exact and therefore $\me\colon \B\lxr \C$ preserves projective objects. As in part (i), $(\ml^0, \me)$ is an adjoint pair of functors between $\mathsf{D}_{\textsf{sg}}(\C)$ and $\mathsf{D}_{\textsf{sg}}(\B)$. Then part (i) provides an adjoint triple $(\ml^1,\ml^0,\me)$ between $\mathsf{D}_{\textsf{sg}}(\C)$ and $\mathsf{D}_{\textsf{sg}}(\B)$, where the triangle functor $\ml^0$ is fully faithful. Thus the recollement $(\mathsf{D}_{\textsf{sg}}(\C), \mathsf{D}_{\textsf{sg}}(\B), \Ker{\ml^1})$ follows from
Lemma~\ref{lempropertiestrianglefunctors} (iii).

(iii) Since the $\ml$-height of $(\A,\B,\C)$ is four, there is a recollement of abelian categories $(\Ker{\ml^1},\B,\C)$ where the adjoint triple between $\B$ and $\C$ is $(\ml^2,\ml^1,\ml^0)$, see Lemma~\ref{proppropertiesladder} (ii).
In this case, all functors are exact and preserve projective objects. As in part (i), the adjoint triple can be derived to get a recollement  of triangulated categories $(\Ker{\ml^1},\mathsf{D}^{\mathsf{b}}(\B),\mathsf{D}^{\mathsf{b}}(\C))$. Then since $\ml^2(\mathsf{K}^{\mathsf{b}}(\Proj{\C}))\subseteq \mathsf{K}^{\mathsf{b}}(\Proj{\B})$, $\ml^1(\mathsf{K}^{\mathsf{b}}(\Proj{\B}))\subseteq \mathsf{K}^{\mathsf{b}}(\Proj{\C})$ and $\ml^0(\mathsf{K}^{\mathsf{b}}(\Proj{\C}))\subseteq \mathsf{K}^{\mathsf{b}}(\Proj{\B})$, $(\Ker{\ml^1}, \mathsf{D}_{\textsf{sg}}(\B), \mathsf{D}_{\textsf{sg}}(\C))$ is a recollement by Lemma~\ref{lemgettingrecolVerdierquotient}.

(iv) By (iii), there is an adjoint triple $(\ml^2,\ml^1,\ml^0)$ and by (ii) the functor $\ml^0$ has $\me$ as a right adjoint.

(v) Since $(\mr^0, \mr^1, \mr^2)$ is an adjoint triple, $\mr^0$ is exact, and $\me$ and $\mr^{0}$ preserve projective objects. So by Lemma~\ref{exactadjoint},
$(\me,\mr^0, \mr^1)$ is an adjoint triple with $\mr^0$ fully faithful between $\mathsf{D}(\B)$ and $\mathsf{D}(\C)$, and this restricts to the bounded derived categories. Lemma~\ref{lempropertiestrianglefunctors} (ii) then implies that $(\me,\mr^0)$ is an adjoint pair of functors between $\mathsf{D}_{\textsf{sg}}(\B)$ and $\mathsf{D}_{\textsf{sg}}(\C)$ where the induced functor $\mr^0\colon \mathsf{D}_{\textsf{sg}}(\C)\lxr \mathsf{D}_{\textsf{sg}}(\B)$ is fully faithful. Hence,
by Lemma~\ref{lempropertiestrianglefunctors} (i), the triangulated categories $\mathsf{D}_{\textsf{sg}}(\B)/\Ker{\me}$ and $\mathsf{D}_{\textsf{sg}}(\C)$ are equivalent.
\end{proof}

\section{Torsion pairs arising from ladders}

A technique will be provided to move torsion pairs in abelian categories via adjoint functors and in particular through Giraud\footnote{See \cite[Chapter~X]{Stenstrom} for the original definition which is equivalent to the one given here.} subcategories in a recollement diagram $(\A, \B, \C)$ with ladders, which proves Theorem B and provides another connection with derived categories.

\begin{minipage}{0.8\textwidth}
Recall from \cite[Definition 1.2]{CFM} that an {\em abelian category with a distinguished Giraud subcategory} is the data $(\D, \C, F, G)$ of two abelian categories $\D$ and $\C$ and two functors $F$ and $G$, with $G$ a left adjoint of $F$, such that $G$ is exact and $F$ is fully faithful. In that case $\C$ is called a Giraud subcategory. Co-Giraud subcategories are defined dually.
\end{minipage}
\begin{minipage}{0.2\textwidth}
\[
\xymatrix@C=0.5cm{
\C\ar[rrr]^{F}  &&& \D \ar
 @/_1.5pc/[lll]_{G}
 }
\]
\end{minipage}

\begin{lem}
\label{Gi}
Let $(\A,\B,\C)$ be a recollement of abelian categories, which admits a ladder of $\ml$-height three.
Then $\C$ occurs as a Giraud subcategory of $\B$ in two different ways, in $(\B, \C, \ml^{0}, \ml^{1})$ and in $(\B, \C, \mr^{0}, \me)$. Moreover, $\C$ occurs as a co-Giraud subcategory of $\B$ in two different ways, in $(\B, \C, \ml^{0}, \me)$ and in $(\B, \C, \ml^{2}, \ml^{1})$.
\begin{proof}\ This follows from Proposition~\ref{proppropertiesladder} and
the definition of  Giraud (resp.\ co-Giraud) subcategory.
\end{proof}
\end{lem}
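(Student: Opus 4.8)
The plan is to unwind the definition of a Giraud (resp.\ co-Giraud) subcategory and, for each of the four triples listed, to verify its three defining conditions directly: that the indicated pair of functors forms an adjunction of the correct handedness, that the functor playing the role of $G$ is exact, and that the functor playing the role of $F$ is fully faithful. All of these follow from the recollement axioms together with the structure of an $\ml$-height three ladder, so the argument is really a matter of collecting the right facts and matching them against the definition.

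First I would record the relevant adjunctions. By Definition~\ref{defnladder}, $(\ml^{i+1},\ml^i)$ is an adjoint pair for every $i\geq 0$, so in particular $(\ml^1,\ml^0)$ and $(\ml^2,\ml^1)$ are adjoint pairs; and from the recollement $(\A,\B,\C)$ itself one has the adjoint triple $(\ml^0,\me,\mr^0)=(\ml,\me,\mr)$, which supplies the adjunctions $(\ml^0,\me)$ and $(\me,\mr^0)$. Next I would collect the exactness statements: $\me$ is exact by Remark~\ref{remdefrecol}(i), and $\ml^1$ is exact because $\ml$-height three provides the adjoint triple $(\ml^2,\ml^1,\ml^0)$, so that $\ml^1$ is simultaneously a right adjoint (of $\ml^2$, hence left exact) and a left adjoint (of $\ml^0$, hence right exact). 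Finally, for full faithfulness: $\ml^0=\ml$ and $\mr^0=\mr$ are fully faithful by the recollement axioms, while $\ml^2$ is fully faithful by the parity observation made in the subsection on ladders, namely that once $\ml^0$ is fully faithful so are $\ml^2,\ml^4,\dots$. Equivalently, the adjoint triple $(\ml^2,\ml^1,\ml^0)$ with $\ml^0$ fully faithful yields the recollement $(\Ker{\ml^1},\B,\C)$ via Remark~\ref{remadjointtriplerecol}, from which these properties can also be read off.

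With these ingredients in hand each case is immediate. For $(\B,\C,\ml^0,\ml^1)$ the adjunction $(\ml^1,\ml^0)$ exhibits the exact functor $\ml^1$ as a left adjoint of the fully faithful $\ml^0$, a Giraud datum; for $(\B,\C,\mr^0,\me)$ the adjunction $(\me,\mr^0)$ exhibits the exact functor $\me$ as a left adjoint of the fully faithful $\mr^0$. Dually, $(\B,\C,\ml^0,\me)$ uses $(\ml^0,\me)$ with $\me$ exact and $\ml^0$ fully faithful, and $(\B,\C,\ml^2,\ml^1)$ uses $(\ml^2,\ml^1)$ with $\ml^1$ exact and $\ml^2$ fully faithful, giving the two co-Giraud data. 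There is no genuine obstacle in this lemma, as the content is bookkeeping; the two points deserving the most care are precisely those not already formally part of the recollement data, namely the exactness of $\ml^1$—which is exactly where the full strength of $\ml$-height three is used, since it needs the adjunctions on \emph{both} sides of $\ml^1$—and the full faithfulness of $\ml^2$, which rests on the parity observation for ladders rather than on the recollement axioms alone.
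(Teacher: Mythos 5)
Your proposal is correct and follows essentially the same route as the paper: the paper's one-line proof invokes Proposition~\ref{proppropertiesladder} (i.e.\ the recollement $(\Ker\ml^1,\B,\C)$ with adjoint triple $(\ml^2,\ml^1,\ml^0)$) together with the definition, and your verification of the adjunctions, the exactness of $\me$ and $\ml^1$, and the full faithfulness of $\ml^0$, $\mr^0$ and $\ml^2$ is exactly the content being packaged there. You have merely spelled out the bookkeeping that the paper leaves implicit.
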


\begin{prop} Let $(\A,\B,\C)$ be a recollement of abelian categories, which admits a ladder of $\ml$-height three. Then the following statements hold.
\begin{enumerate}

\item $(\ml^{1}(\Ker\mq), \ml^{1}(\mi(\A)))$ is a torsion pair in $\C$ if and only if $\ml^{1}\circ\mi=0$. In this case, $\ml^{1}(\Ker\mq)=\C$.

\item $(\ml^{1}(\mi(\A)), \ml^{1}(\Ker\map))$ is a torsion pair in $\C$ if and only if $\map\circ\ml^{0}\circ\ml^{1}(\Ker\map)=0$.

\item $((\ml^{1}(\Ker\mq), \ml^{1}(\mi(\A)))$ is a torsion pair in $\C$ if and only if $\mq\circ\ml^{2}\circ\ml^{1}(\Ker\mq)=0$.

\item $(\ml^{1}(\mi(\A)), \ml^{1}(\Ker\map))$ is a torsion pair in $\C$ if and only if $\me\circ\ml^{2}\circ\ml^{1}\circ \mi=0$.
\end{enumerate}
\begin{proof}
(i) By Lemma~\ref{Gi}, $(\B, \C, \ml^{1}, \ml^{0})$ is a Giraud subcategory of $\B$. Since $(\Ker \mq, \mi(\A))$ is a torsion pair in $\B$ by \cite[Theorem~4.3]{PsaroudVitoria:1}, it follows from \cite[Proposition 3.3]{CFM} that $(\ml^{1}(\Ker \mq), \ml^{1}(\mi(\A)))$ is a torsion pair in $\C$ if and only if $\ml^{0}(\ml^{1}(\mi(\A)))\subseteq \mi(\A)$.
Since $\ml^{0}$ is fully faithful and $\Ker \me=\Image \mi$,  there is an inclusion $\ml^{0}(\ml^{1}(\mi(\A)))\subseteq \mi(\A)$
if and only if $\ml^{1}\circ\mi=0$.  Hence $\ml^{1}(\mi(\A))=0$ and $\ml^{1}(\Ker \mq)=\C$.

(iii) By Lemma~\ref{Gi}, $(\B, \C, \ml^{2}, \ml^{1})$ is a co-Giraud subcategory of $\B$. Since $(\Ker\ \mq, \mi(\A))$ is a torsion pair in $\B$, it follows from \cite[Theorem~3.5]{CFM} that $(\ml^{1}(\Ker\mq), \ml^{1}(\mi(\A)))$ is a torsion pair in $\C$ if and only if $\ml^{2}(\ml^{1}(\Ker\mq))\subseteq \Ker \mq$.
Note that $\ml^{2}(\ml^{1}(\Ker\mq))\subseteq \Ker\mq$ means that $\mq\circ\ml^{2}\circ\ml^{1}(\Ker\mq)=0$.  

The proofs of (ii) and (iv) are similar.
\end{proof}
\end{prop}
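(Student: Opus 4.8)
The plan is to transfer the two standard torsion pairs of $\B$ across the functor $\ml^1$, invoking the Giraud and co-Giraud structures recorded in Lemma~\ref{Gi} together with the transfer criteria of \cite{CFM}. By \cite[Theorem~4.3]{PsaroudVitoria:1} both $(\Ker\mq, \mi(\A))$ and $(\mi(\A), \Ker\map)$ are torsion pairs in $\B$, and each of the four assertions asks exactly when the $\ml^1$-image of one of these is again a torsion pair in $\C$. The proof reduces to selecting, in each case, the correct one of the four structures from Lemma~\ref{Gi} and reading off the resulting containment condition as the vanishing of a composite of functors.

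For (i) and (ii) I would use that $(\B, \C, \ml^0, \ml^1)$ is a Giraud subcategory, so that $\ml^1$ is an exact left adjoint of the fully faithful $\ml^0$. In this situation \cite[Proposition~3.3]{CFM} says that the $\ml^1$-image of a torsion pair $(\T, \F)$ of $\B$ is a torsion pair of $\C$ precisely when $\ml^0\ml^1(\F) \subseteq \F$, so that the criterion is imposed on the torsion-free class. For (i) the pair is $(\Ker\mq, \mi(\A))$, and the criterion reads $\ml^0\ml^1(\mi(\A)) \subseteq \mi(\A)$; since $\mi(\A) = \Ker\me$ and $\me\ml^0 \cong \iden_\C$, applying $\me$ shows this containment is equivalent to $\ml^1(\mi(\A)) = 0$, that is, to $\ml^1\circ\mi = 0$, in which case the torsion-free class collapses to $0$ and hence $\ml^1(\Ker\mq) = \C$. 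For (ii) the pair is $(\mi(\A), \Ker\map)$, and the criterion $\ml^0\ml^1(\Ker\map) \subseteq \Ker\map$ is, by the very definition of the class $\Ker\map$, exactly the equation $\map\circ\ml^0\circ\ml^1(\Ker\map) = 0$.

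For (iii) and (iv) I would instead use that $(\B, \C, \ml^2, \ml^1)$ is a co-Giraud subcategory, so that $\ml^1$ is now an exact right adjoint of the fully faithful $\ml^2$. The dual transfer result \cite[Theorem~3.5]{CFM} states that the $\ml^1$-image of $(\T, \F)$ is a torsion pair of $\C$ precisely when $\ml^2\ml^1(\T) \subseteq \T$, so that the criterion is now imposed on the torsion class. Applied to $(\Ker\mq, \mi(\A))$ this gives $\ml^2\ml^1(\Ker\mq) \subseteq \Ker\mq$, which is the equation $\mq\circ\ml^2\circ\ml^1(\Ker\mq) = 0$, proving (iii). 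Applied to $(\mi(\A), \Ker\map)$ the torsion class is $\mi(\A) = \Ker\me$, so the condition $\ml^2\ml^1(\mi(\A)) \subseteq \Ker\me$ becomes $\me\circ\ml^2\circ\ml^1\circ\mi = 0$, proving (iv).

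I expect the only genuine obstacle to be bookkeeping rather than mathematics: one must pair each statement with the correct structure---the Giraud structure $(\B, \C, \ml^0, \ml^1)$ yields the condition on the torsion-free class and hence governs (i) and (ii), while the co-Giraud structure $(\B, \C, \ml^2, \ml^1)$ yields the condition on the torsion class and governs (iii) and (iv)---and then translate the membership conditions into vanishing of composites using only $\Ker\me = \Image\mi$, the isomorphism $\me\ml^0 \cong \iden_\C$, and the definition of $\Ker\map$. Once the data are matched correctly, the \cite{CFM} criteria give each equivalence at once, with no further computation.
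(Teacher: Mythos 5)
Your proposal is correct and follows essentially the same route as the paper: both torsion pairs of $\B$ are transferred along $\ml^1$ using the Giraud structure $(\B,\C,\ml^0,\ml^1)$ with the criterion of \cite[Proposition 3.3]{CFM} for (i)--(ii) and the co-Giraud structure $(\B,\C,\ml^2,\ml^1)$ with \cite[Theorem 3.5]{CFM} for (iii)--(iv), and the resulting containments are translated into vanishing of composites exactly as in the paper's argument. The only difference is that you write out cases (ii) and (iv) explicitly, which the paper dismisses as ``similar.''
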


Let $\B$ be an abelian category with a torsion pair
$(\mathcal{T}, \mathcal{F})$. Let $\mathcal{H}_{\B}$ be the tilt of $\B$
by the torsion pair  $(\mathcal{T}, \mathcal{F})$. Now we can prove
Theorem~\ref{tilt} of the Introduction:

\begin{proof}[Proof of Theorem B]
Since $(\mathcal{T}, \mathcal{F})$ is a torsion pair in $\B$ such that $\ml^{0}(\ml^{1}(\mathcal{F}))\subseteq \mathcal{F}$ and $\ml^2(\ml^1(\T))\subseteq \T$, it follows from \cite[Proposition 3.3 and 3.8]{CFM} that $(\ml^{1}(\mathcal{T}), \ml^{1}(\mathcal{F}))$ is a torsion pair in $\C$. We denote by $\mathcal{H}_{\C}$ the HRS-tilt of $\C$ by $(\ml^{1}(\mathcal{T}), \ml^{1}(\mathcal{F}))$. This
proves claim (i).

\begin{minipage}{0.4 \textwidth}
Consider the following diagram \\
\xymatrix@C=0.5cm{  \mathsf{D}(\B) \ar[ddd]_{\mathsf{H}^{0}_{\mathsf{D}(\B)}} \ar[rrrr]^{\mathsf{l}^1}   &&&& \mathsf{D}(\C) \ar[ddd]_{\mathsf{H}^{0}_{\mathsf{D}(\C)}}
\ar @/_1.5pc/[llll]_{\mathbb{L}\mathsf{l}^2} \ar
 @/^1.5pc/[llll]^{\mathsf{l}^0} \\ 
  &&&&  \\
  &&&& \\
 \mathcal{H}_{\B} \ar @/_1.5pc/[uuu]_{\epsilon_{\mathcal{H}_{\B}}} \ar[rrrr]^{\mathsf{l}^1_{\mathcal{H}}}   &&&& \mathcal{H}_{\C} \ar @/_1.5pc/[uuu]_{\epsilon_{\mathcal{H}_{\mathcal{C}}}}
\ar @/_1.5pc/[llll]_{\mathsf{l}^2_{\mathcal{H}}} \ar
 @/^1.5pc/[llll]^{\mathsf{l}^0_{\mathcal{H}}} 
 } 
\end{minipage}
\hspace*{1cm}
\begin{minipage}{0.5 \textwidth}
The functors appearing in this diagram are definied as follows. The recollement of abelian categories $(\A,\B,\C)$ admits a ladder of $\ml$-height three. This implies that the functors $\ml^0$ and $\ml^1$ are exact and they induce canonical functors between the corresponding derived categories, still denoted by $\ml^0$ and $\ml^1$. Since $\C$ has enough projectives, we get the derived functor $\mathbb{L}\ml^2\colon \mathsf{D}(\C)\lxr \mathsf{D}(\B)$. As $(\ml^2, \ml^1, \ml^0)$ is an adjoint triple at the level of abelian categories, it is well known that $(\mathbb{L}\ml^2, \ml^1, \ml^0)$ is an adjoint triple at the level of unbounded derived categories. 
By $\epsilon$ we denote the inclusion functor and by $\mathsf{H}^0$ the canonical cohomological functor to the heart. 
\end{minipage}

So there are functors
\[
\ml^2_{\mathcal{H}} = \mathsf{H}^0_{\mathsf{D}(\mathsf{B})}\circ \mathbb{L}\ml^2\circ \epsilon_{\mathcal{H}_{\C}}
\]
\[
\ml^1_{\mathcal{H}} = \mathsf{H}^0_{\mathsf{D}(\mathsf{C})}\circ \ml^1\circ \epsilon_{\mathcal{H}_{\B}}
\]
\[
\ml^0_{\mathcal{H}} = \mathsf{H}^0_{\mathsf{D}(\mathsf{B})}\circ \ml^0\circ \epsilon_{\mathcal{H}_{\C}}
\]
To get the recollement $(\Ker\ml^1_{\mathcal{H}}, \mathcal{H}_{\B}, \mathcal{H}_{\C})$, it suffices by Remark~\ref{remadjointtriplerecol} to show that $(\ml^2_{\mathcal{H}}, \ml^1_{\mathcal{H}}, \ml^0_{\mathcal{H}})$ is an adjoint triple and $\ml^2_{\mathcal{H}}$, equivalently $\ml^0_{\mathcal{H}}$, is fully faithful.

We show that $(\ml^2_{\mathcal{H}}, \ml^1_{\mathcal{H}})$ is an adjoint pair of functors. The \textsf{HRS}-tilt of 
$\C$ by $(\ml^1(\mathcal{T}), \ml^1(\mathcal{F}))$ is the abelian category
\[
\mathcal{H}_{\C}:=\big\{C^{\bullet}\in \mathsf{D}(\mathcal{\C}) \ | \ \mh^{0}(C^{\bullet})\in \ml^1(\mathcal{T}), \mh^{-1}(C^{\bullet})\in \ml^1(\mathcal{F}),
\mh^{i}(C^{\bullet})=0, \forall i> 0, \mh^{i}(C^{\bullet})=0, \forall i<-1 \big\}
\]
and the \textsf{HRS}-tilt of $\B$ by $(\mathcal{T}, \mathcal{F})$ is
\[
\mathcal{H}_{\B}:=\big\{B^{\bullet}\in \mathsf{D}(\mathcal{\B}) \ | \ \mh^{0}(B^{\bullet})\in \mathcal{T}, \mh^{-1}(C^{\bullet})\in \mathcal{F},
\mh^{i}(B^{\bullet})=0, \forall i> 0, \mh^{i}(B^{\bullet})=0, \forall i<-1 \big\}
\]
Denote by $\mathsf{D}^{\geq 0}_{\F}$ the coaisle of the \textsf{HRS} t-structure with heart $\mathcal{H}_{\B}$, and let $\tau^{\geq 0}_{\mathcal{F}}\colon \mathsf{D}(\mathcal{B})\lxr \mathsf{D}^{\geq 0}_{\F}$ be the left adjoint of the inclusion functor $\inc_{\F}\colon \mathsf{D}^{\geq 0}_{\F}\lxr \mathsf{D}(\mathcal{B})$. Since $(\mathbb{L}\ml^2, \ml^1)$ is an adjoint pair, the functor $\mathbb{L}\ml^2$ is right t-exact, i.e. it sends an aisle to an aisle. The same remark is used below for the functor $\ml^1$. Then we have the following sequence of isomorphisms
\begin{eqnarray}
 \Hom_{\mathcal{H}_{\B}}(\ml^2_{\mathcal{H}}(X), Y) &=& \Hom_{\mathcal{H}_{\B}}(\mathsf{H}^0_{\mathsf{D}(\mathsf{B})} \mathbb{L}\ml^2 \epsilon_{\mathcal{H}_{\C}}(X), Y) \nonumber \\
   &\cong & \Hom_{\mathsf{D}(\B)}(\tau^{\geq 0}_{\mathcal{F}} \mathbb{L}\ml^2(X), Y) \nonumber \\
   &\cong & \Hom_{\mathsf{D}(\B)}(\mathbb{L}\ml^2(X), \inc_{\F}(Y)) \nonumber \\
   &\cong & \Hom_{\mathsf{D}(\C)}(X, \ml^1(\inc_{\F}(Y))) \nonumber \\
   &\cong & \Hom_{\mathcal{H}_{\C}}(X, \mathsf{H}^0_{\mathsf{D}(\mathsf{\C})} \ml^1\epsilon_{\mathcal{H}_{\B}}(Y)) \nonumber \\
    &=& \Hom_{\mathcal{H}_{\C}}(X, \ml^1_{\mathcal{H}}(Y))
\end{eqnarray}
Similarly, $(\ml^1_{\mathcal{H}}, \ml^0_{\mathcal{H}})$ is seen to be
an adjoint pair.   

By Remark~\ref{remadjointtriplerecol} it suffices to show that the functor $\ml^0_{\mathcal{H}}$ is fully faithful, or equivalently, the counit map $\ml^1_{\mathcal{H}}\ml^0_{\mathcal{H}}(X)\lxr X$ is a natural isomorphism. But this is easy to verify using the fact that both functors $\ml^1$ and $\ml^0$ are $t$-exact. This completes the proof of (ii). Finally, part (iii) follows by deriving the recollement $(\Ker\ml^1_{\mathcal{H}}, \mathcal{H}_{\B}, \mathcal{H}_{\C})$. Note that since $\ml^1_{\mathcal{H}}$ and $\ml^0_{\mathcal{H}}$ are exact, we get the derived functors between the corresponding derived categories of hearts and since $\mathcal{H}_{\C}$ has enough projectives we can derive the functor $\ml^2_{\mathcal{H}}$.
\end{proof}

\section{Using ladders to transfer Gorenstein properties}

Given a recollement diagram $(\A,\B,\C)$ with ladders, Gorensteinness of $\B$ and $\C$ will be connected, as well as the stable categories of Gorenstein projective objects of $\B$ and $\C$. Moreover, invariance of being Gorenstein projective or injective under certain functors in the recollement will be shown. In
particular, Theorem C will be proved.

\subsection{\bf Gorenstein properties}\
Let $\A$ be an abelian category with enough projective and injective objects. Associated to $\A$ are the following homological invariants\footnote{The invariants $\spli$ and $\silp$ were defined by Gedrich and Gruenberg \cite{GedrichGruenberg}  over any ring $R$. In particular, $\silp{R}$ is defined as the supremum of the injective lengths (dimensions) of projective $R$-modules, and $\spli{R}$ is the supremum of the projective lengths (dimensions) of injective $R$-modules. They were introduced to investigate complete cohomological functors of groups.}:
\[
\spli{\A} = \sup\{{\pd}_{\A}I \mid I\in \Inj{\A} \}
\ \ \text{and} \ \
\silp{\A} = \sup\{{\id}_{\A}P \mid P\in \Proj{\A} \}
\]
The category $\A$ is called {\bf Gorenstein} if $\spli{\A}<\infty$ and $\silp{\A}<\infty$. Moreover, $\A$ is called {\bf n-Gorenstein} if its Gorenstein dimension $\textsf{G}$-$\dime{\A}=\maxx\{\spli{\A}, \silp{\A}\}\leq n$, see \cite[Chapter VII]{BR} for more information on Gorenstein abelian categories.

According to Beligiannis-Reiten \cite[Theorem 2.2, Chapter VII]{BR}, an abelian category is Gorenstein if and only if every object has a finite resolution by Gorenstein projective objects, which are defined as follows:
A complex of projective objects $\textsf{P}^{\bullet}\colon \cdots \lxr P^{-1}\lxr P^{0}\lxr P^{1}\lxr \cdots $ is called \textsf{totally acyclic} if $\textsf{P}^{\bullet}$ and $\Hom_{\A}(\textsf{P}^{\bullet},P)$ are acyclic for every projective object $P$ of $\A$. Then an object $X$ of $\A$ is called \textsf{Gorenstein projective} if $X$ is isomorphic to $\Coker{(P^{-1}\lxr P^{0})}$ for some totally acyclic complex $\textsf{P}^{\bullet}$ of projective objects of $\A$. We denote by $\GProj{\A}$ the full subcategory of Gorenstein projective objects of $\A$. Now let $X$ be an object in $\GProj{\A}$ and $\textsf{P}^{ \, \bullet}$ its totally acyclic complex. Recall from \cite{EJ} that  for every object $Y$ of $\A$ with $\pd_{\A}Y<\infty$ the complex $\Hom_{\A}(\textsf{P}^{ \, \bullet},Y)$ is acyclic.

Gorenstein injective object  can be defined dually. We denote by $\Ginj{\A}$ the full subcategory of Gorenstein injective objects of $\A$. Then
$\A$ is called virtually Gorenstein if $\GProj{\A}^{\perp}=^{\perp}\Ginj{\A}$.

Let $(\A,\B,\C)$ be a recollement of abelian categories, where the categories $\B$ and $\C$ are assumed to have enough projective and injective objects. Recall from \cite{Psaroud:homolrecol} that the $\A$-\textsf{relative global dimension} of $\B$ is defined by $\gd_{\A}\B=\sup\{\pd_{\B}\mi(A) \ | \ A\in \A\}$ (where
$\mi$ as usual is the inclusion functor). In what follows the finiteness of this dimension is needed.

\begin{lem}\textnormal{(\!\!\cite[Proposition $4.4$]{Psaroud:homolrecol})}
\label{lemprojinj}
Let $(\A,\B,\C)$ be a recollement of abelian categories and let $X$ be an object of $\B$. Then the following inequalities hold$\colon$
\begin{enumerate}
\item $\pd_{\B}X \ \leq \ \pd_{\C}\me(X)+\gd_{\A}\B+1$.

\item $\id_{\B}X \ \leq \ \id_{\C}\me(X)+\sup\{\id_{\B}\mi(A) \ | \ A\in \A\}+1$.

\end{enumerate}
\end{lem}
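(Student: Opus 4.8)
The plan is to derive both inequalities from the two canonical four-term exact sequences of Remark~\ref{remdefrecol}(v), reducing each to a single auxiliary estimate that compares the homological dimension of $\ml(C)$ (resp. $\mr(C)$) over $\B$ with that of $C$ over $\C$. If $\pd_\C\me(X)$ or $\gd_\A\B$ is infinite, (i) is vacuous, so assume both finite. For (i) I would start from $(\ref{firstcanonicalexactseq})$, $0\to\Ker{\mu_X}\to\ml\me(X)\xrightarrow{\mu_X}X\xrightarrow{\lambda_X}\mi\mq(X)\to0$, where $\Ker{\mu_X}$ and $\mi\mq(X)$ lie in $\A$ and hence have projective dimension at most $\gd_\A\B$ over $\B$. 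Splitting it into $0\to\Ker{\mu_X}\to\ml\me(X)\to\Image{\mu_X}\to0$ and $0\to\Image{\mu_X}\to X\to\mi\mq(X)\to0$ and applying the standard short-exact-sequence inequalities for $\pd$ gives $\pd_\B X\le\max\{\pd_\B\ml\me(X),\,\gd_\A\B+1\}$. Everything therefore reduces to bounding $\pd_\B\ml\me(X)$.

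The core step will be the auxiliary claim: for every $C\in\C$ with $\pd_\C C<\infty$ one has $\pd_\B\ml(C)\le\pd_\C C+\gd_\A\B+1$. Two facts drive this. Since $\ml$ is left adjoint to the exact functor $\me$, it preserves projectives; and since $\me$ is exact with $\me\ml\cong\iden_\C$ (Remark~\ref{remdefrecol}(iii)), applying the exact functor $\me$ to a projective resolution of $C$ shows that the higher left derived functors $\mathbb{L}_i\ml(C)$, $i>0$, are annihilated by $\me$, hence lie in $\Ker\me=\mi(\A)$ and have projective dimension at most $\gd_\A\B$ over $\B$. I would then prove the claim by induction on $\pd_\C C$: from a short exact sequence $0\to C'\to Q\to C\to0$ with $Q$ projective the long exact sequence of derived functors yields $0\to\mathbb{L}_1\ml(C)\to\ml(C')\to\ml(Q)\to\ml(C)\to0$, with $\ml(Q)$ projective and $\mathbb{L}_1\ml(C)\in\A$; the two short exact sequences obtained by cutting this at the image of $\ml(C')\to\ml(Q)$, together with the inductive bound on $\pd_\B\ml(C')$, give the asserted estimate (the base case being that $\ml$ of a projective is projective). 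Feeding $\pd_\B\ml\me(X)\le\pd_\C\me(X)+\gd_\A\B+1$ into the reduction of the first paragraph yields (i).

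Part (ii) will be entirely dual. Using $(\ref{secondcanonicalexactseq})$, $0\to\mi\map(X)\to X\xrightarrow{\nu_X}\mr\me(X)\to\Coker{\nu_X}\to0$ with the outer terms in $\A$, the short-exact-sequence inequalities for injective dimension reduce the problem to bounding $\id_\B\mr\me(X)$, via $\id_\B X\le\max\{\id_\B\mr\me(X),\,e+1\}$ where $e=\sup\{\id_\B\mi(A)\mid A\in\A\}$. Here $\mr$ is right adjoint to the exact functor $\me$, so it preserves injectives, and $\me\mr\cong\iden_\C$ forces the higher right derived functors $\mathbb{R}^i\mr(C)$ to lie in $\A$. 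The dual induction on $\id_\C C$, using co-syzygy sequences $0\to C\to E\to C''\to0$ with $E$ injective, then gives $\id_\B\mr(C)\le\id_\C C+e+1$, and combining with the reduction yields (ii).

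The main obstacle will be the auxiliary claim, and within it the bookkeeping of the derived functors $\mathbb{L}_i\ml$ (resp. $\mathbb{R}^i\mr$): the essential point is to verify that they genuinely land in $\A$, which is exactly where exactness of $\me$ and the isomorphisms $\me\ml\cong\iden_\C$, $\me\mr\cong\iden_\C$ are used, and it is precisely this extra homology carried by $\A$ that produces the additive constant $+1$ in both bounds. Once the derived functors are under control, the repeated use of short-exact-sequence dimension inequalities is routine.
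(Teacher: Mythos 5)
Your argument is correct, but note that the paper does not actually prove this lemma: it is imported verbatim from \cite[Proposition~4.4]{Psaroud:homolrecol}, so there is no in-text proof to compare against. Your reconstruction is essentially the standard argument (and, in substance, the one in the cited reference): split the four-term sequences $(\ref{firstcanonicalexactseq})$ and $(\ref{secondcanonicalexactseq})$ at the image of $\mu_X$ (resp.\ $\nu_X$) to reduce to bounding $\pd_{\B}\ml\me(X)$ (resp.\ $\id_{\B}\mr\me(X)$), and then prove $\pd_{\B}\ml(C)\leq \pd_{\C}C+\gd_{\A}\B+1$ by induction on $\pd_{\C}C$, using that the homology of $\ml$ applied to a projective resolution is killed by the exact functor $\me$ (because $\me\ml\cong\iden_{\C}$) and hence lands in $\A$. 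The dimension bookkeeping in both the reduction and the induction checks out. Two small remarks. First, the syzygy/cosyzygy induction and the derived functors $\mathbb{L}_i\ml$, $\mathbb{R}^i\mr$ require $\C$ (and $\B$) to have enough projectives and injectives; the lemma as stated does not assume this, but the paper imposes exactly these hypotheses at the start of Section~6 where the lemma is used, so nothing is lost. Second, you can avoid the derived-functor formalism entirely: from $0\to C'\to Q\to C\to 0$ right-exactness of $\ml$ gives $\ml(C')\to\ml(Q)\to\ml(C)\to 0$, and applying the exact $\me$ to the kernel $N$ of $\ml(C')\to\ml(Q)$ shows directly that $\me(N)=0$, i.e.\ $N\in\Image\mi$; this is the same object as $\mathbb{L}_1\ml(C)$ but needs no resolution. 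With that cosmetic simplification your write-up is a complete, self-contained proof of the cited result.
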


\begin{lem}
\label{glo}
Let $(\A,\B,\C)$ be a recollement of abelian categories.
\begin{enumerate}
\item Assume that the functor $\mr\colon \C\lxr \B$ is exact. Then $\spli{\C} \leq \spli{\B}$. Moreover, if
$\spli{\B}<\infty$, then $\sup\{\pd_{\C}\me(I) \ | \ I\in \Inj{\B}\}<\infty$.

\item Assume that the functor $\mr\colon \C\lxr \B$ is exact, $\sup\{\id_{\B}\mi(A) \ | \ A\in \A\}<\infty$ and $\silp{\C}<\infty$. Then $\silp{\B}<\infty$.

\item Assume that the $\A$-relative global dimension of $\B$ is finite and $\sup\{\pd_{\C}$ $\me(I) \ | \ I\in \Inj{\B}\}<\infty$. Then $\pd_{\B}\mr(\me(P))< \infty$ for every projective object $P$ of $\B$,
and $\spli{\B}<\infty$.

\end{enumerate}
\begin{proof}
(i) Assume that $\spli{\B}=n<\infty$.  Since $\mr\colon \C\lxr \B$ is exact, $\me\colon \B\lxr \C$ preserves projective objects.
Let $I$ be an injective object of $\C$. Then the object $\mr(I)$ is injective in $\B$ and therefore there exists an exact sequence
$0\lxr P_n\lxr \cdots \lxr P_0\lxr \mr(I)\lxr 0$ with $P_i\in \Proj{\B}$. Applying $\me$ yields the exact sequence $0\lxr \me(P_n)\lxr \cdots \lxr \me(P_0)\lxr I\lxr 0$ with $\me(P_i)$ projective.
This implies that $\pd_{\C}I\leq n$. Hence, $\spli{\C} \leq \spli{\B}$.

Let $I$ be an injective object of $\B$.  Then $\spli{\B}=n$ implies $\pd_{\C}\me(I)\leq\pd_{\B}I\leq n$ and therefore $\sup\{\pd_{\C}\me(I) \ | \ I\in \Inj{\B}\}<\infty$.

(ii) Since the functor $\mr\colon \C\lxr \B$ is exact, the functor $\me\colon \B\lxr \C$ preserves projective objects. Assume that  $\sup\{\id_{\B}\mi(A) \ | \ A\in \A\}=n<\infty$ and let $P$ be a projective object of $\B$. By Lemma~\ref{lemprojinj}, $\id_{\B}P \leq \id_{\C}\me(P)+n+1\leq \silp{\C}+n+1$. Therefore, $\silp{\B}<\infty$.

(iii) Assume that $\gd_{\A}\B=\lambda<\infty$. Let $P$ be a projective object of $\B$. The long exact homology sequence of the exact sequence $0\lxr \mi\map(P)\lxr P\xrightarrow{\nu_{P}} \mr\me(P)\lxr \Coker{\nu_P}\lxr 0$, where $\Coker{\nu_P}=\mi(A)$ for some $A$ of $\A$, yields that $\Ext_{\B}^m(\mr(\me(P)),B)=0$ for every $m\geq \lambda+2$ and $B\in \B$. This shows that the projective dimension of $\mr(\me(P))$ is less than or equal to $\lambda+1$.

Suppose that $\sup\{\pd_{\C}\me(I) \ | \ I\in \Inj{\B}\}=\kappa<\infty$. Let $I$ be an injective object of $\B$. Then $\pd_{\C}\me(I)\leq \kappa$ and
Lemma~\ref{lemprojinj} implies $\pd_{\B}\mr\me(I)\leq \kappa+\lambda+1$. From the  exact sequence $0\lxr \mi\map(I)\lxr I\xrightarrow{\nu_I}\mr\me(I)\lxr \Coker{\nu_I}\lxr 0$, where $\Coker{\nu_I}=\mi(A)$ for some object $A$ of $\A$, it follows that $\pd_{\B}I\leq \kappa+\lambda+1<\infty$.
Hence $\spli{\B}<\infty$.
\end{proof}
\end{lem}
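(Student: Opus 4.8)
The plan is to reduce all three parts to three standard facts about the adjoint triple $(\ml,\me,\mr)$ together with the canonical sequences (\ref{firstcanonicalexactseq}) and (\ref{secondcanonicalexactseq}) and the dimension estimates of Lemma~\ref{lemprojinj}. The three facts I would isolate first are: $\me$ is always exact and the counit $\me\mr\lxr\iden_{\C}$ is a natural isomorphism (both from Remark~\ref{remdefrecol}); and the elementary adjunction principle that a left adjoint of an exact functor preserves projectives, while a right adjoint of an exact functor preserves injectives. In particular, whenever $\mr$ is exact its left adjoint $\me$ preserves projectives, and since $\me$ is always exact its right adjoint $\mr$ always preserves injectives.

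For (i) I would argue as follows. With $\spli{\B}=n<\infty$, take $I\in\Inj{\C}$; then $\mr(I)\in\Inj{\B}$, so it admits a projective resolution of length $\le n$. Applying the exact, projective-preserving functor $\me$ and using $\me\mr(I)\cong I$ produces a projective resolution of $I$ of length $\le n$, whence $\pd_{\C}I\le n$ and so $\spli{\C}\le\spli{\B}$. For the ``moreover'' clause, applying the same functor $\me$ to a projective resolution of an injective $I\in\Inj{\B}$ gives $\pd_{\C}\me(I)\le\pd_{\B}I\le\spli{\B}$, so the supremum is finite. Part (ii) is then immediate from Lemma~\ref{lemprojinj}(ii): for projective $P$ one has $\id_{\B}P\le\id_{\C}\me(P)+\sup\{\id_{\B}\mi(A)\mid A\in\A\}+1$, and since $\me(P)$ is projective the term $\id_{\C}\me(P)$ is bounded by $\silp{\C}$; taking the supremum over projective $P$ bounds $\silp{\B}$.

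For (iii) set $\lambda=\gd_{\A}\B<\infty$, so $\pd_{\B}\mi(A)\le\lambda$ for all $A\in\A$. To bound $\pd_{\B}\mr\me(P)$ for projective $P$, I would split the canonical sequence (\ref{secondcanonicalexactseq}), namely $0\lxr\mi\map(P)\lxr P\lxr\mr\me(P)\lxr\mi(A)\lxr 0$, into the two short exact sequences $0\lxr\mi\map(P)\lxr P\lxr K\lxr 0$ and $0\lxr K\lxr\mr\me(P)\lxr\mi(A)\lxr 0$ with $K=\Image\nu_P$. Since $P$ is projective and $\pd_{\B}\mi\map(P)\le\lambda$, the first sequence gives $\pd_{\B}K\le\lambda+1$; then the second, together with $\pd_{\B}\mi(A)\le\lambda$, gives $\pd_{\B}\mr\me(P)\le\lambda+1<\infty$. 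For the final assertion $\spli{\B}<\infty$, write $\kappa=\sup\{\pd_{\C}\me(I)\mid I\in\Inj{\B}\}$ and apply Lemma~\ref{lemprojinj}(i) directly to an injective $I\in\Inj{\B}$: this yields $\pd_{\B}I\le\pd_{\C}\me(I)+\lambda+1\le\kappa+\lambda+1$, so $\spli{\B}\le\kappa+\lambda+1<\infty$.

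The step I expect to require the most care is the bound on $\pd_{\B}\mr\me(P)$ in (iii). The object $\mr\me(P)$ occurs in the interior of a four-term exact sequence rather than at an end, so its projective dimension cannot be read off directly; the point is to route through the intermediate object $K=\Image\nu_P$ and to track how projective dimension propagates across each short exact sequence, using the relative global dimension hypothesis to control the two outer terms $\mi\map(P)$ and $\mi(A)$ lying in $\mi(\A)$. Once this bookkeeping is set up, the remaining assertions follow formally from the counit isomorphism $\me\mr\cong\iden_{\C}$ and the preservation of (co)resolutions by $\me$.
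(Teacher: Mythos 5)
Your proof is correct and follows essentially the same route as the paper: parts (i) and (ii) are identical, and in (iii) your bound $\pd_{\B}\mr\me(P)\le\lambda+1$, obtained by splitting the canonical four-term sequence into two short exact sequences through $K=\Image{\nu_P}$, is the same computation the paper performs via the long exact $\Ext$-sequence. The only deviation is at the very end of (iii), where you apply Lemma~\ref{lemprojinj}(i) directly to $I\in\Inj{\B}$ rather than to $\mr\me(I)$ followed by another pass through the canonical sequence; this is a harmless streamlining that yields the same bound $\kappa+\lambda+1$.
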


\begin{lem}
\textnormal{(\!\!\cite[Lemma 3.2]{LernerOppermann})}
\label{Extadjoint}
Let $\A$ and $\B$ be exact categories, $F\colon \A\lxr \B$ and $G\colon \B\lxr \A$ a pair of exact adjoint functors. Then
for any integer $n\geq 0$, ${\Ext}^{n}_{\A}(A, G(B))\cong {\Ext}^{n}_{\B}(F(A),B)$,
functorial in $A\in \A$ and $B\in \B$.
\end{lem}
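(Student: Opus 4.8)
The plan is to compute $\Ext$ via Yoneda's description, so that no projective or injective resolutions are needed and the whole argument stays inside the exact categories $\A$ and $\B$. Recall that in an exact category $\Ext^n(X,Y)$ is the group of equivalence classes of admissible $n$-extensions $0 \to Y \to E_1 \to \cdots \to E_n \to X \to 0$, and that an additive exact functor carries admissible exact sequences to admissible exact sequences. Hence $F$ and $G$ induce group homomorphisms $F_*\colon \Ext^n_\A(A,A') \to \Ext^n_\B(FA,FA')$ and $G_*\colon \Ext^n_\B(B,B') \to \Ext^n_\A(GB,GB')$ by applying the functor termwise to a representative. I will also use the two standard operations on Yoneda $\Ext$: pushforward $v_*$ along a map $Y \to Y'$ (pushout of the admissible monomorphism at the left end) and pullback $u^*$ along a map $X' \to X$ (pullback of the admissible epimorphism at the right end); both exist and preserve admissibility in an exact category.

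Let $\eta\colon \iden_\A \to GF$ and $\varepsilon\colon FG \to \iden_\B$ be the unit and counit. I define $\Phi\colon \Ext^n_\A(A,GB) \to \Ext^n_\B(FA,B)$ by $\Phi = (\varepsilon_B)_* \circ F_*$ (apply $F$, then push forward along the counit), and $\Psi\colon \Ext^n_\B(FA,B) \to \Ext^n_\A(A,GB)$ by $\Psi = (\eta_A)^* \circ G_*$ (apply $G$, then pull back along the unit). For $n=0$ these specialise to $\xi \mapsto \varepsilon_B \circ F(\xi)$ and its inverse, that is, to the adjunction bijection $\Hom_\A(A,GB) \cong \Hom_\B(FA,B)$; thus the argument is uniform in $n \geq 0$ and the bottom case is immediate. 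Functoriality in $A$ and $B$ will be automatic, since $\Phi$ is assembled from $F_*$ (natural in both variables) and pushforward along $\varepsilon_B$ (natural in $B$ by naturality of $\varepsilon$).

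The heart of the proof is to show that $\Phi$ and $\Psi$ are mutually inverse. For this I will invoke the elementary fact about Yoneda $\Ext$ that a morphism of $n$-extensions with end maps $(u,v)$ realises the identity $u^* = v_*$ on the corresponding classes. Applied to the naturality ladders of $\eta$ and of $\varepsilon$, this yields the exchange identities $(\theta_C)_* = (\theta_A)^* \circ H_*$ for a natural transformation $\theta\colon \iden \to H$, together with the dual identity for a natural transformation $H \to \iden$. Combining these with the compatibilities $G_*(\varepsilon_B)_* = (G\varepsilon_B)_* G_*$ and $F_*(\eta_A)^* = (F\eta_A)^* F_*$ (a functor commutes with push/pull, the new morphism being its image), and with the fact that pushforwards and pullbacks at opposite ends commute, a short computation collapses $\Psi\Phi$ to pushforward along $G\varepsilon_B \circ \eta_{GB}$ and $\Phi\Psi$ to pullback along $\varepsilon_{FA} \circ F\eta_A$. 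The two triangle identities of the adjunction then make both of these the identity, so $\Psi\Phi = \iden$ and $\Phi\Psi = \iden$.

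The main obstacle is bookkeeping rather than conceptual. One must check that $F_*$, $G_*$ and the push/pull operations are well defined on equivalence classes of admissible $n$-extensions and are additive with respect to the Baer sum, which is precisely where exactness (and additivity) of $F$ and $G$ enters, and one must keep the variances straight in the exchange identities. Once the morphism-of-extensions lemma and the triangle identities are set up, the verification that $\Phi$ and $\Psi$ are inverse, and that they are natural in both variables, is a purely formal manipulation.
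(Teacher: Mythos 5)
Your proof is correct. Note, however, that the paper does not contain a proof of this lemma at all: the statement is quoted with a citation to \cite[Lemma~3.2]{LernerOppermann}, so there is no in-paper argument to compare against. Your Yoneda-extension argument is the standard (and essentially the cited source's) proof: represent classes in $\Ext^{n}$ by admissible $n$-extensions, transport them by the exact functor, and correct the end term by pushing forward along the counit, respectively pulling back along the unit; the exchange identity $v_{*}\xi=u^{*}\xi'$ for a morphism of extensions, applied to the naturality ladders of $\eta$ and $\varepsilon$, together with the two triangle identities, collapses $\Psi\Phi$ and $\Phi\Psi$ to the identity. The places where exactness of $F$ and $G$ (beyond additivity) genuinely enters are exactly the ones you flag: admissible $n$-extensions must go to admissible $n$-extensions, and the bicartesian squares computing $v_{*}$ and $u^{*}$ must be preserved so that $G_{*}(\varepsilon_{B})_{*}=(G\varepsilon_{B})_{*}G_{*}$ and $F_{*}(\eta_{A})^{*}=(F\eta_{A})^{*}F_{*}$; in an exact category these squares are encoded by admissible short exact sequences, so an exact functor does preserve them. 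Your observation that the case $n=0$ recovers the adjunction bijection and that functoriality is automatic from naturality of $\eta$ and $\varepsilon$ is also correct.
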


The main result of this Section includes Theorem~\ref{Gorentein} of the Introduction:

\begin{thm}
\label{pregproj}
Let $(\A,\B,\C)$ be a recollement of abelian categories.
\begin{enumerate}

\item Assume that the $\A$-relative global dimension of $\B$ is finite and that $(\A,\B,\C)$ admits a ladder of $\mr$-height two. Then the functor $\me\colon \B\lxr \C$ preserves Gorenstein projective objects. Moreover, if $(\A,\B,\C)$ admits a ladder of $\ml$-height two, then the functor $\ml\colon \C\lxr \B$ preserves Gorenstein projective objects, and $\B$ is Gorenstein if and only if $\C$ is Gorenstein.

\item Assume that $(\A,\B,\C)$ has a ladder of $\ml$-height three. Then the functor $\ml^{1}\colon \B\lxr \C$ preserves Gorenstein projective objects and the functor $\me\colon \B\lxr \C$ preserves Gorenstein injective objects. Furthermore, if $\B$ is $n$-Gorenstein, then $\C$ is $n$-Gorenstein.

\item Assume that  $(\A,\B,\C)$ admits a ladder of $\ml$-height four. Then the functor $\ml\colon \C\lxr \B$ preserves Gorenstein injective objects. Moreover, $\me\circ\ml\cong \iden_{\GInj\C}$.

\item Assume that  $(\A,\B,\C)$ admits a ladder of $\mr$-height three. Then the functor $\me\colon \B\lxr \C$ preserves Gorenstein projective objects and the functor $\mr^{1}\colon \B\lxr \C$ preserves Gorenstein injective objects. Furthermore, if $\B$ is Gorenstein, then $\C$ is Gorenstein.

\item Assume that  $(\A,\B,\C)$ admits a ladder of $\mr$-height four. Then the functor $\mr\colon \C\lxr \B$ preserves Gorenstein projective objects. Moreover, $\me\circ\mr\cong {\iden}_{\GProj\C}$.

\item Assume that  $(\A,\B,\C)$ admits a ladder of $\ml$-height two and $\mr$-height three. Then the functor $\mr\colon \C\lxr \B$ preserves Gorenstein injective objects. Moreover, $\mr^{1}\circ\mr\cong {\iden}_{\Ginj\C}$.

\item Assume that  $(\A,\B,\C)$ admits a ladder of $\ml$-height two and  $\mr$-height two. If $\B$ is virtually Gorenstein, then $(\me({\GProj}\B), \me(\mathcal{P}^{<\infty}(\B)), \me({\GInj}\B))$ is a cotorsion triple in $\C$, where $\mathcal{P}^{<\infty}(\B)$ is the full subcategory of $\B$ consisting of objects of finite projective dimension.

\item Assume that  $(\A,\B,\C)$ admits a ladder of $\ml$-height four. If $\B$ is virtually Gorenstein, then $(\ml^{1}, \ml)$ induces an adjoint pair $(\ml^{1}, \ml)$ between $({\GProj}\B)^{\perp}$ and ${^{\perp}({\GInj}\C)}$. Moreover, $\ml^{1}\circ \ml\cong {\iden}_{{^{\perp}({\GInj}\C)}}$.

\item Assume that  $(\A,\B,\C)$ admits a ladder of $\mr$-height four. If $\C$ is virtually Gorenstein, then  $(\mr, \mr^{1})$ induces an adjoint pair $(\mr, \mr^{1})$ between ${^{\perp}({\GInj}\C)}$ and $({\GProj}\B)^{\perp}$. Moreover, $\mr^{1}\circ \mr\cong {\iden}_{{^{\perp}({\GInj}\C)}}$.
\end{enumerate}
\begin{proof}
(i) Let $B$ be a Gorenstein projective object of $\B$. Then there exists an exact complex of projectives $\textsf{P}^{\bullet}\colon \cdots \lxr P^{-2}\lxr P^{-1}\lxr P^0\lxr P^{1}\lxr P^2\lxr \cdots$ such that  $\Coker{(P^{-1}\lxr P^{0})}$ is isomorphic to $B$ and the complex $\Hom_{\B}(\textsf{P}^{\bullet},P)$ is exact for every projective object $P$ of $\B$. Since $\mr$ is exact and $(\me,\mr)$ is an adjoint pair, the complex $\me(\textsf{P}^{\bullet})\colon \cdots \lxr \me(P^{-2})\lxr \me(P^{-1})\lxr \me(P^0)\lxr \me(P^{1})\lxr \me(P^2)\lxr \cdots$ is exact in $\C$ with $\me(P^i)\in \Proj{\C}$ and $\Coker{(\me(P^{-1})\lxr \me(P^{0}))}\iso \me(B)$. Note that since the functor $\ml\colon \C\lxr \B$ preserves projective objects, the category of projectives of $\C$ is equivalent with $\add{\me(\Proj{\B})}$. Thus we have to show that the complex $\cdots\lxr \Hom_{\C}(\me(P^1),\me(P))\lxr \Hom_{\C}(\me(P^0),\me(P))\lxr \Hom_{\C}(\me(P^{-1}),\me(P))\lxr \cdots$ is exact for every $P\in \Proj{\B}$. The adjoint pair $(\me,\mr)$ yields the commutative diagram
\[
\xymatrix@C=0.5cm{
\cdots \ar[r] & \Hom_{\C}(\me(P^1),\me(P)) \ar[r] \ar[d]^{\iso} & \Hom_{\C}(\me(P^0),\me(P)) \ar[r] \ar[d]^{\iso} & \Hom_{\C}(\me(P^{-1}),\me(P)) \ar[d]^{\iso} \ar[r] & \cdots  \\
 \cdots \ar[r] & \Hom_{\B}(P^1,\mr\me(P)) \ar[r] & \Hom_{\B}(P^0,\mr\me(P)) \ar[r] & \Hom_{\B}(P^{-1},\mr\me(P)) \ar[r] & \cdots  }
\]
Then by Lemma~\ref{glo}, the complex  $\Hom_{\B}(\textsf{P}^{\bullet},\mr(\me(P)))$ is exact since the projective dimension of $\mr(\me(P))$ is finite. Hence the complex $\Hom_{\C}(\me(\textsf{P}^{\bullet}),\me(P))$ is exact. So, the object $\me(B)$ is Gorenstein projective in $\C$. Let $C$ be an object of $\C$. Then for the object $\ml(C)$ of $\B$,
there is an exact sequence $0\lxr X_n\lxr \cdots \lxr X_0\lxr \ml(C)\lxr 0$ with $X_i\in \GProj{\B}$, by \cite[Theorem 2.2, Chapter VII]{BR}. Thus, there is an exact sequence $0\lxr \me(X_n)\lxr \cdots \lxr \me(X_0)\lxr C\lxr 0$ with $\me(X_i)$ in $\GProj{\C}$. This shows, again using \cite[Theorem 2.2, Chapter VII]{BR}, that the category $\C$ is Gorenstein.

Conversely, suppose that $\C$ is Gorenstein and let $B$ an object in $\B$. By Remark~\ref{remdefrecol}, there exists an exact sequence
$(*)\colon0\lxr \mi(A)\lxr \ml\me(B)\lxr B \lxr \mi\mq(B)\lxr 0$ with $A\in \A$. Since the $\A$-relative global dimension of $\B$ is finite, it follows that $\mi(A)$ and $\mi\mq(B)$ have finite projective dimension. On the other hand, for the object $\me(B)$, there is an exact sequence $0\lxr Y_n\lxr \cdots \lxr Y_0\lxr \me(B)\lxr 0$ with $Y_i\in \GProj{\C}$. Since  $(\A,\B,\C)$ admits a ladder of $\ml$-height two, the functor $\ml$ is exact and since $\me$ preserve projectives it follows similarly as above that $\ml$ preserves Gorenstein projective objects. Thus, there is an exact sequence $0\lxr \ml(Y_n)\lxr \cdots \lxr \ml(Y_0)\lxr \ml\me(B)\lxr 0$ with $\ml(Y_i)$ in $\GProj{\B}$. From the exact sequence $(*)$ we infer that the object $B$ admits a finite resolution from Gorenstein projectives, and therefore from \cite[Theorem 2.2, Chapter VII]{BR} we conclude that the category $\B$ is Gorenstein.

(ii)  Since $(\A,\B,\C)$ has a ladder of $\ml$-height three, that is, $(\ml^2,\ml^1,\ml)$ is an adjoint triple, $\ml^1$ and $\ml$ are exact and they preserve projective objects. Moreover, $\ml$ and $\me$ preserve injective objects. Let $G$ be a Gorenstein projective object in $\B$. Then there exists a totally acyclic complex of projective objects of $\B\colon$
\[
\mathsf{P}^{\bullet}\colon \
\xymatrix{
  \cdots \ar[r]_{}^{} & P^{-1} \ar[r]^{d^{-1}} & P^0 \ar[r]^{d^0} & P^{1} \ar[r] & \cdots }
\]
that is, this complex is exact with terms in $\Proj{\B}$ such that the complex $\Hom_{\B}(P^{\bullet}, E)$ is still exact for any $E\in \Proj{\B}$ and $G\cong \Image{d^{0}}$.
Applying the functor $\ml^{1}$ yields that $\ml^{1}(\mathsf{P}^{\bullet})$ is an exact sequence of projective objects of $\C$, and also $\Hom_{\C}(\ml^{1}(P^{\bullet}), Q)\cong \Hom_{\B}(P^{\bullet}, \ml(Q))$ is exact for any projective object $Q$ of $\C$. This implies that $\ml^{1}(G)$ is a Gorenstein projective object of $\C$.

Let $G$ be a Gorenstein injective object in $\B$. Then there exists an exact sequence $I^{\bullet}:=\cdots\to I^{i}\xrightarrow{d^{i}} I^{i+1}\to \cdots$ of injective objects of $\B$ such that $\Hom_{\B}(E, I^{\bullet})$ is still exact for any injective object $E\in \B$ with $G\cong {\Image}d^{0}$. Since $\me$ and $\ml$ preserve injective objects, applying $\me$, we get that $\me(I^{\bullet})$ is an exact sequence of injective objects of $\C$, and also $\Hom_{\C}(I, \me(I^{\bullet}))\cong \Hom_{\B}(\ml(I), I^{\bullet})$ is exact for any injective object $I$ of $\C$. Therefore, $\me(G)$ is a Gorenstein injective object of $\C$.

Let $I\in \Inj{\C}$ and $P\in \Proj{\C}$. We claim that $\pd_{\C}I\leq n$ and $\id_{\C}P\leq n$. Since $(\ml^1,\ml)$ is an adjoint pair and $\ml$ is fully faithful, there is an isomorphism $\ml^1(\ml(I))\iso I$ where $\ml(I)\in \Inj{\B}$. Since $\spli{\B}\leq n$ there is an exact sequence $0\lxr P_n\lxr\cdots \lxr P_0\lxr \ml(I)\lxr 0$ with $P_i\in \Proj{\B}$. Then applying the exact functor $\ml^1$ and using that all $\ml^1(P_i)\in \Proj{\C}$, we obtain that $\pd_{\C}I\leq n$. Hence $\spli{\C}\leq n$. Also, since $\silp{\B}\leq n$ there is an exact sequence $(*)\colon 0\lxr \ml(P)\lxr I^0\lxr \cdots\lxr I^n\lxr 0$ where $\ml(P)\in \Proj{\B}$ and $I^i\in \Inj{\B}$. Applying the exact functor $\me$ to the sequence $(*)$, yields the exact sequence $0\lxr P\lxr \me(I^0)\lxr \cdots\lxr \me(I^n)\lxr 0$ where $\me(I^i)$ lies in $\Inj{\C}$. Hence $\id_{\C}P\leq n$ and therefore $\silp{\C}\leq n$. So, $\C$ is $n$-Gorenstein.

(iii) Since $(\A,\B,\C)$ admits a ladder of $\ml$-height four, $\ml^{1}$ and $\ml$ are exact functors preserving injective objects. Let $G$ be a Gorenstein injective object in $\B$. Then there exists an exact sequence $I^{\bullet}:=\cdots\to I^{i}\xrightarrow{d^{i}} I^{i+1}\to \cdots$ of injective objects of $\B$ with $\Hom_{\B}(E, I^{\bullet})$ is still exact for any injective object $E\in \B$
such that $G\cong {\rm Im}d^{0}$. Applying $\ml$, we get that $\ml(I^{\bullet})$ is an exact sequence of injective objects of $\C$, and also $\Hom_{\C}(I, \ml(I^{\bullet}))\cong \Hom_{\B}(\ml^{1}(I), I^{\bullet})$ is exact for any injective object $I$ of $\C$. This implies that $\ml(G)$ is a Gorenstein injective object of $\C$. Furthermore, by (ii), $\me$ preserves Gorenstein injective objects. 
So, $\me\circ\ml\cong {\rm Id}_{{\rm GInj}\C}$.

(iv) Since $(\A,\B,\C)$ admits a ladder of $\mr$-height three, $\mr^{1}$ and $\mr$ are exact and preserve injective objects. Moreover, $\me$ and $\mr$ preserve projective objects. Let $G$ be a Gorenstein projective object in $\B$.
As in the first part of the proof of (ii) it is shown that $\me(G)$ is a Gorenstein projective object of $\C$.

Let $G$ be a Gorenstein injective object in $\B$. Then there exists an exact sequence $I^{\bullet}:=\cdots\to I^{i}\xrightarrow{d^{i}} I^{i+1}\to \cdots$ of injective objects of $\B$ scuh that $\Hom_{\B}(E, I^{\bullet})$ is still exact for any injective object $E\in \B$ with $G\cong \Image{d^{0}}$. Applying $\mr^{1}$ yields $\mr^{1}(I^{\bullet})$ is an exact sequence of injective objects of $\C$, and also $\Hom_{\C}(I, \mr^{1}(I^{\bullet}))\cong \Hom_{\B}(\mr(I), I^{\bullet})$ is exact for any injective object $I$ of $\C$. This implies that $\mr^{1}(G)$ is a Gorenstein injective object of $\C$.

Let $C\in \C$. Then for the object $\mr(C)$ of $\B$, by \cite[Theorem 2.2, Chapter VII]{BR}, there is an exact sequence $0\lxr X_n\lxr \cdots \lxr X_0\lxr \mr(C)\lxr 0$ with $X_i\in \GProj{\B}$. There also exists an exact sequence $0\lxr \me(X_n)\lxr \cdots \lxr \me(X_0)\lxr C\lxr 0$ with $\me(X_i)\in \GProj{\C}$. 
Hence, the category $\C$ is Gorenstein.

(v) Since $(\A,\B,\C)$ admits a ladder of $\mr$-height four, $\mr^{1}$ is an exact functor preserving projective objects and $\mr$ preserves projective objects. Let $G$ be a Gorenstein projective object in $\C$. Then there exists an exact sequence $P^{\bullet}:=\cdots\to P^{i}\xrightarrow{d^{i}} P^{i+1}\to \cdots$ of projective objects of $\C$ with ${\rm Hom}_{\C}(P^{\bullet}, E)$ still exact for any projective object $E\in \C$
such that $G\cong \Image{d^{0}}$. Applying $\mr$, we get that $\mr(P^{\bullet})$ is an exact sequence of projective objects of $\B$, and also $\Hom_{\B}(\mr(P^{\bullet}), Q)\cong {\rm Hom}_{\C}(P^{\bullet}, \mr^{1}(Q))$ is exact for any projective object $Q$ of $\B$. Thus $\mr(G)$ is a Gorenstein projective object of $\B$. Furthermore, by (iv), $\me$ preserves Gorenstein projective objects. Therefore, $\me\circ\mr\cong {\iden}_{\GProj\C}$. The proof of (vi) is dual.

(vii) Since $(\A,\B,\C)$ admits a ladder of $\ml$-height two and  $\mr$-height two, $\ml$ and  $\mr$ are exact.
Since $\B$ is virtually Gorenstein, $(\GProj\B, \mathcal{P}^{<\infty}(\B), \GInj\B)$ is a cotorsion triple. We claim that $(\me(\GProj\B), \me(\mathcal{P}^{<\infty}(\B)))$ is a cotorsion pair in $\C$. Indeed,  let $X$ be in $\Gproj\B$ and $Z$ in $\C$ such that $\Ext^{1}_{\C}(\me(X), Z)=0$. Then by Lemma~\ref{Extadjoint}, $0=\Ext^{1}_{\C}(\me(X), Z)=\Ext^{1}_{\B}(X, \mr(Z))$. Thus $\mr(Z)\in \mathcal{P}^{<\infty}(\B)$. This implies that  $Z\cong \me\mr(Z)\in \me(\mathcal{P}^{<\infty}(\B))$.
Let $Y\in \mathcal{P}^{<\infty}(\B)$ and  $Z\in \C$ such that $\Ext^{1}_{\C}(Z, \me(Y))=0$. Then by Lemma~\ref{Extadjoint},  $0=\Ext^{1}_{\C}(Z, \me(Y))=\Ext^{1}_{B}(\ml(Z), Y)$. Thus $\ml(Z)\in \GProj\B$ and  $Z\cong \me\ml(Z)\in \me(\GProj\B)$. Similarly it is shown that $(\me(\mathcal{P}^{<\infty}(\B)), \me(\GInj\B))$ is a cotorsion pair in $\C$.

(viii) Let $X$ be in $(\GProj\B)^{\perp}$ and $E$ in $\GInj\C$. By Lemma~\ref{Extadjoint} and (iii), for all $i\geq 1$ there is an isomorphism $\Ext_{\C}^{i}(\ml^{1}(X), E)\cong \Ext_{\B}^{i}(X, \ml(E))$. Since $\B$ is virtually Gorenstein, $X\in ^{\perp}(\GInj\B)$ and so $\Ext_{\C}^{i}(\ml^{1}(X), E)=0$ for all ${i\geq 1}$. This implies that $\ml^{1}\colon (\GProj\B)^{\perp}\lxr {^{\perp}(\GInj\C)}$ is well defined. On the other hand, let $Y$ be in ${^{\perp}(\GInj\C)}$ and $I$ in $\GInj\B$. By Lemma~\ref{Extadjoint} and (ii), there are isomorphisms, for all $i\geq 1$, $\Ext_{\B}^{i}(\ml(Y), I)
\cong \Ext_{\C}^{i}(Y, \me(I))=0$. Since $\B$ is virtually Gorenstein, it follows that $\ml(Y)$ lies in $(\GProj\B)^{\perp}$. This implies that $\ml\colon {^{\perp}(\GInj\C)}\lxr (\GProj\B)^{\perp}$ is well defined. Thus $\ml^{1}\circ \ml\cong {\iden}_{{^{\perp}(\GInj\C)}}$. The proof of (ix) is dual.
\end{proof}
\end{thm}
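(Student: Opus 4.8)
The plan is to treat all nine parts by one uniform mechanism, varying only the bookkeeping according to which adjoints the ladder height supplies. The recurring task is: given a functor $\mathsf{F}$ occurring in the ladder and a Gorenstein projective (resp. injective) object $G$ with totally acyclic complex $\mathsf{P}^{\bullet}$ (resp. $\mathsf{I}^{\bullet}$), to show that $\mathsf{F}(G)$ is again Gorenstein projective (resp. injective). First I would record which functors the relevant ladder height forces to be exact and to preserve projectives and/or injectives; this is immediate from the adjoint-triple structure, since a functor with both a left and a right adjoint is exact, a left adjoint of an exact functor preserves projectives, and a right adjoint of an exact functor preserves injectives. Applying such an $\mathsf{F}$ to $\mathsf{P}^{\bullet}$ then yields an acyclic complex of projectives of $\C$ with $\mathsf{F}(G)$ as the relevant cocycle, so the only genuine content is the \emph{total acyclicity}, i.e. the exactness of $\Hom_{\C}(\mathsf{F}(\mathsf{P}^{\bullet}), Q)$ for every projective $Q$ of $\C$.

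The verification of total acyclicity is where adjunction does the work. If $(\mathsf{F}, \mathsf{F}')$ is an adjoint pair, then $\Hom_{\C}(\mathsf{F}(\mathsf{P}^{\bullet}), Q) \cong \Hom_{\B}(\mathsf{P}^{\bullet}, \mathsf{F}'(Q))$, so exactness follows as soon as $\mathsf{F}'(Q)$ is an object along which a totally acyclic complex stays $\Hom$-acyclic, namely an object of finite projective dimension (recall from \cite{EJ} that $\Hom_{\B}(\mathsf{P}^{\bullet}, Y)$ is acyclic whenever $\pd_{\B}Y < \infty$). In the higher-ladder cases, parts (ii)--(vi), the extra adjoint in the ladder makes $\mathsf{F}'$ preserve projectives (resp. injectives) outright, so $\mathsf{F}'(Q)$ is projective and total acyclicity passes through verbatim; this is the easy situation. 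The genuinely delicate case is part (i), where only an $\mr$-height two ladder is available: here $\mathsf{F} = \me$ and $\mathsf{F}' = \mr$, and instead one must show that $\mr\me(P)$ has finite projective dimension for projective $P$. This is exactly the output of Lemma~\ref{glo}(iii), and it is the reason the finiteness of the $\A$-relative global dimension of $\B$ is hypothesised. I expect this interplay, converting the total-acyclicity condition into a finiteness-of-dimension statement and then invoking Lemma~\ref{glo}, to be the main obstacle, since it is the only point where the naive adjunction argument does not close by itself.

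For the transfer of Gorensteinness (the assertions ``if $\B$ is Gorenstein then $\C$ is'' in (i) and (iv), and the $n$-Gorenstein sharpening in (ii)), I would invoke the Beligiannis--Reiten criterion \cite[Theorem~2.2, Chapter~VII]{BR}: it suffices to resolve an arbitrary object of $\C$ by Gorenstein projectives. Given $C \in \C$, I would lift it to $\B$ along a fully faithful section ($\ml$, $\mr$, or an iterate), take a finite Gorenstein-projective resolution there, and push it back down by an exact functor ($\me$) already shown to preserve Gorenstein projectives; applying $\me\ml \cong \iden$ or $\me\mr \cong \iden$ then recovers $C$ as the image, giving the required resolution. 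The converse direction in (i) additionally uses the four-term canonical sequence $(\ref{firstcanonicalexactseq})$, whose two outer terms have finite projective dimension by the relative-global-dimension hypothesis, so that a Gorenstein-projective resolution of the middle term $\ml\me(B)$ upgrades to one of $B$. For the $n$-Gorenstein bound in (ii) I would instead bound $\spli{\C}$ and $\silp{\C}$ directly, transporting a projective resolution of $\ml(I)$ for $I \in \Inj{\C}$ through the exact functor $\ml^{1}$ (using $\ml^{1}\ml \cong \iden$) and an injective coresolution of $\ml(P)$ for $P \in \Proj{\C}$ through $\me$ (using that $\me$ preserves injectives).

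The remaining parts (vii)--(ix) I would deduce formally from Lemma~\ref{Extadjoint} together with the definition of virtual Gorensteinness, $\GProj{\B}^{\perp} = {}^{\perp}\GInj{\B}$. For (viii) and (ix), I would check that $\ml^{1}$ (resp. $\mr^{1}$) carries $(\GProj{\B})^{\perp}$ into ${}^{\perp}(\GInj{\C})$ and that its partner carries objects back, each membership being the vanishing of an $\Ext$-group that Lemma~\ref{Extadjoint} transports across the adjunction and that virtual Gorensteinness then supplies; the identity $\ml^{1}\ml \cong \iden$ (resp. $\mr^{1}\mr \cong \iden$) is inherited from the ladder. For (vii), the same $\Ext$-transport shows that $\me$ sends the cotorsion triple $(\GProj{\B}, \mathcal{P}^{<\infty}(\B), \GInj{\B})$ to a cotorsion triple in $\C$, with $\me\ml \cong \iden \cong \me\mr$ producing the approximating objects. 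These parts should involve no new difficulty once Lemma~\ref{Extadjoint} is in hand.
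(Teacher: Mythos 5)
Your proposal is correct and follows essentially the same route as the paper: the adjunction isomorphism $\Hom_{\C}(\mathsf{F}(\mathsf{P}^{\bullet}),Q)\cong\Hom_{\B}(\mathsf{P}^{\bullet},\mathsf{F}'(Q))$ combined with either projectivity/injectivity of $\mathsf{F}'(Q)$ (higher ladders) or finiteness of $\pd_{\B}\mr\me(P)$ via Lemma~\ref{glo}(iii) (the $\mr$-height-two case), the Beligiannis--Reiten resolution criterion together with the canonical four-term sequence for the Gorenstein transfer, direct $\spli$/$\silp$ bounds for the $n$-Gorenstein refinement, and Lemma~\ref{Extadjoint} with virtual Gorensteinness for (vii)--(ix). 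You have correctly isolated the one non-formal point, namely that in part (i) the finiteness of the $\A$-relative global dimension is what makes $\mr\me(P)$ of finite projective dimension and hence keeps total acyclicity.
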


\begin{cor} Let $(\A,\B,\C)$ be a recollement of abelian categories. Assume that  $(\A,\B,\C)$ admits a ladder of $\mr$-height four.
Then there is an upper recollement of triangulated categories
\[
\xymatrix@C=0.5cm{
\mathsf{K}(\GProj{\C}) \ar[rrr]^{\mr}  &&& \mathsf{K}(\GProj{\B}) \ar[rrr] \ar
 @/_1.5pc/[lll]_{\me}&&& \Ker{\me} \ar
 @/_1.5pc/[lll]
 }
\]
\begin{proof} By Theorem~\ref{pregproj} (iv) and (v), $(\me, \mr)$ induces an adjoint pair between $\GProj{\B}$ and $\GProj{\C}$ with $\mr$ fully faithful. Furthermore, $(\me, \mr)$ induces an adjoint pair $(\me, \mr)$ between $\mathsf{K}(\GProj{\B})$ and $\mathsf{K}(\GProj{\C})$ with $\mr\colon \mathsf{K}(\GProj{\C})\lxr \mathsf{K}(\GProj{\B})$ fully faithful. Then we get the desired upper recollement of triangulated categories. 
\end{proof}
\end{cor}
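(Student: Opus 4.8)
The plan is to transport the adjoint pair $(\me,\mr)$ of the recollement down to the subcategories of Gorenstein projectives, lift it to the homotopy categories, and then read off the upper recollement from the fact that $\mr$ becomes a fully faithful triangle functor admitting a left adjoint.

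First I would invoke Theorem~\ref{pregproj}. A ladder of $\mr$-height four is in particular one of $\mr$-height three, so part~(iv) shows that $\me\colon \B\lxr \C$ preserves Gorenstein projective objects, while part~(v) shows that $\mr\colon \C\lxr \B$ preserves Gorenstein projective objects and supplies a natural isomorphism $\me\circ\mr\iso \iden_{\GProj\C}$. Restricting the recollement adjunction $(\me,\mr)$ to these subcategories therefore yields an adjoint pair $(\me,\mr)$ between $\GProj\B$ and $\GProj\C$ whose counit $\me\mr\lxr \iden_{\GProj\C}$ is invertible; equivalently, $\mr$ is fully faithful on $\GProj\C$.

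Next I would pass to the homotopy categories. Both $\me$ and $\mr$ are additive and, by the previous paragraph, carry complexes over $\GProj\B$ to complexes over $\GProj\C$ and conversely; applied degreewise they commute with the shift and preserve mapping cones, hence induce triangle functors $\me\colon \mathsf{K}(\GProj\B)\lxr \mathsf{K}(\GProj\C)$ and $\mr\colon \mathsf{K}(\GProj\C)\lxr \mathsf{K}(\GProj\B)$. The degreewise adjunction isomorphism is natural and compatible with differentials and with homotopies, so it descends to an adjunction $(\me,\mr)$ between the two homotopy categories, and the degreewise counit isomorphism produces an isomorphism $\me\mr\iso \iden$ of endofunctors of $\mathsf{K}(\GProj\C)$; thus $\mr\colon \mathsf{K}(\GProj\C)\lxr \mathsf{K}(\GProj\B)$ is again fully faithful.

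Finally I would assemble the upper recollement from the fully faithful triangle functor $\mr$ with left adjoint $\me$. Writing $\T=\mathsf{K}(\GProj\B)$, the adjunction isomorphism $\Hom_{\T}(X,\mr C)\iso \Hom_{\mathsf{K}(\GProj\C)}(\me X,C)$ gives at once $\Ker\me={}^{\bot}(\Image\mr)$, and completing each unit morphism to a triangle $k_X\lxr X\lxr \mr\me X\lxr k_X[1]$ exhibits a semiorthogonal decomposition $\T=\langle \Image\mr,\Ker\me\rangle$. This produces a functor $\T\lxr \Ker\me$, $X\mapsto k_X$, whose kernel is exactly $\Image\mr$ and which has the inclusion $\Ker\me\hookrightarrow \T$ as a left adjoint; together with $\me\dashv\mr$ this is the diagram claimed. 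The step that I expect to demand the most care is precisely this last assembly: one must check that the formal orthogonality and the unit triangles yield the adjoints on the correct sides, so that the inclusion of $\Ker\me$ really is the left adjoint of $X\mapsto k_X$ and $\Image\mr$ really is its kernel, i.e.\ that one obtains the upper recollement as drawn and not its mirror image.
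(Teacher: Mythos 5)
Your proposal is correct and follows essentially the same route as the paper: invoke Theorem~\ref{pregproj}~(iv) and~(v) to restrict $(\me,\mr)$ to an adjoint pair between $\GProj\B$ and $\GProj\C$ with $\mr$ fully faithful, lift it degreewise to the homotopy categories, and then extract the upper recollement from the fully faithful right adjoint. The only difference is that you spell out the final assembly (the unit triangles and the identification $\Ker\me={}^{\bot}(\Image\mr)$) which the paper leaves implicit; your care about which side the adjoints land on is warranted but resolved correctly.
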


\subsection{\bf Stable categories of Gorenstein projective modules}
Finally, given a recollement diagram $(\A,\B,\C)$, the stable categories of
Gorenstein projective objects of $\B$ and of $\C$ are related.

\begin{prop}
\label{propfunctorsbetweenstableGproj}
Let $(\A,\B,\C)$ be a recollement of abelian categories
\begin{enumerate}
\item Assume that $(\A,\B,\C)$ admits a ladder of $l$-height two and $r$-height three. Then $(\ml, \me)$ induces an adjoint pair $(\ml, \me)$ between $\underline{\GProj}\C$ and $\underline{\GProj}\B$ with $\ml\colon \underline{\GProj}\C\lxr \underline{\GProj}\B$ fully faithful.

\item \label{mr-four}
Assume that $(\A,\B,\C)$ admits a ladder of $\mr$-height four.
Then $(\me, \mr)$ induces an adjoint pair $(\me, \mr)$ between $\underline{\GProj}\B$ and $\underline{\GProj}\C$ with $\mr\colon \underline{\GProj}\C\lxr \underline{\GProj}\B$ fully faithful.
\end{enumerate}
\end{prop}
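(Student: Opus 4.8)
The plan is to treat both parts in parallel, reducing each to three points: the two relevant functors restrict to the subcategories of Gorenstein projectives, they descend to the stable categories, and the adjunction together with the appropriate full faithfulness survives the descent. The first point is supplied by Theorem~\ref{pregproj}, and the remaining two are formal consequences of the fact that all functors in sight preserve ordinary projective objects.

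For part (i), the hypothesis of $\ml$-height two makes $\ml$ exact (it now has the left adjoint $\ml^{1}$ alongside its right adjoint $\me$), and $\mr$-height three makes $\mr$ exact, so that its left adjoint $\me$ preserves projectives. Theorem~\ref{pregproj}(iv) gives that $\me$ preserves Gorenstein projectives. For $\ml$ I would rerun the argument used for $\ml^{1}$ in the proof of Theorem~\ref{pregproj}: for $G\in\GProj\C$ with totally acyclic complex $\mathsf{Q}^{\bullet}$ of projectives, the complex $\ml(\mathsf{Q}^{\bullet})$ is an exact complex of projectives of $\B$ (as $\ml$ is exact and, being a left adjoint of the exact functor $\me$, preserves projectives), and for $P\in\Proj\B$ the adjunction $(\ml,\me)$ gives $\Hom_{\B}(\ml(\mathsf{Q}^{\bullet}),P)\iso\Hom_{\C}(\mathsf{Q}^{\bullet},\me(P))$, which is exact since $\me(P)\in\Proj\C$. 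Hence $\ml(G)\in\GProj\B$, and this needs no assumption on the $\A$-relative global dimension. Thus $(\ml,\me)$ restricts to an adjoint pair between $\GProj\C$ and $\GProj\B$.

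For part (ii), $\mr$-height four contains $\mr$-height three, so Theorem~\ref{pregproj}(iv) gives that $\me$ preserves Gorenstein projectives, while Theorem~\ref{pregproj}(v) gives that $\mr$ preserves Gorenstein projectives and that $\me\circ\mr\iso\iden_{\GProj\C}$. Here $\me$ preserves projectives (as $\mr$ is exact) and $\mr$ preserves projectives, being a left adjoint of $\mr^{1}$, which is exact because $\mr$-height four makes $\mr^{1}$ simultaneously a right adjoint (of $\mr^{0}=\mr$) and a left adjoint (of $\mr^{2}$). Hence $(\me,\mr)$ restricts to an adjoint pair between $\GProj\B$ and $\GProj\C$.

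The passage to the stable categories is the only genuinely new step, and it is exactly where preservation of ordinary projectives is used. Since each functor involved sends projectives to projectives, it sends a morphism factoring through a projective to a morphism factoring through a projective, and so descends to the stable quotient. The adjunction descends as well: for $(\ml,\me)$, a map $\ml(X)\to Y$ factoring through $P\in\Proj\B$ corresponds under $\Hom_{\B}(\ml X,Y)\iso\Hom_{\C}(X,\me Y)$ to a map factoring through $\me(P)\in\Proj\C$, and conversely a map $X\to\me Y$ factoring through $Q\in\Proj\C$ corresponds to one factoring through $\ml(Q)\in\Proj\B$; thus the adjunction isomorphism carries projectively trivial maps onto projectively trivial maps and induces $\underline{\Hom}(\ml X,Y)\iso\underline{\Hom}(X,\me Y)$, and the same bookkeeping handles $(\me,\mr)$. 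Finally, full faithfulness is inherited from the recollement: by Remark~\ref{remdefrecol}(iii) the unit $\iden_{\C}\lxr\me\ml$ and the counit $\me\mr\lxr\iden_{\C}$ are natural isomorphisms, hence remain isomorphisms after restriction to Gorenstein projectives and passage to the stable quotient, so that $\ml\colon\underline{\GProj}\C\lxr\underline{\GProj}\B$ (with invertible unit) and $\mr\colon\underline{\GProj}\C\lxr\underline{\GProj}\B$ (with invertible counit) are fully faithful. The one point requiring care is precisely this compatibility of the adjunction with the ideal of projectively trivial maps, which is guaranteed exactly because both functors preserve ordinary projectives.
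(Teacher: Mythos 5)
Your proof is correct and follows essentially the same route as the paper's: restrict the adjoint pair to Gorenstein projectives via Theorem~\ref{pregproj} together with the total-acyclicity argument, then descend to the stable categories using preservation of ordinary projectives, with full faithfulness coming from the unit/counit isomorphisms of Remark~\ref{remdefrecol}(iii). The only difference is cosmetic: where you observe directly that the adjunction bijection carries a factorization through $P$ to one through $\ml(P)$ (resp.\ through $\me(Q)$), the paper routes one direction through the canonical exact sequence of Remark~\ref{remdefrecol}(v), but both arguments establish the same compatibility with the ideal of projectively trivial maps.
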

\begin{proof}
(a) By the proof of Theorem~\ref{pregproj} (iv), the functor $\me$ preserves projective objects and Gorenstein projective objects. This means that $\me$ induces a triangle functor $\me\colon \underline{\GProj}\B\lxr \underline{\GProj}\C$. Now we claim that $\ml$ preserves Gorenstein projective objects. Indeed, since $\me$ is exact, $\ml$ preserves projective objects. Since $\ml$ has a left adjoint, $\ml$ is exact. Let $G$ be a Gorenstein projective object in $\C$. Then there exists a totally acyclic complex of projective objects of $\C\colon$
\[
\mathsf{P}^{\bullet}\colon \
\xymatrix{
  \cdots \ar[r]_{}^{} & P^{-1} \ar[r]^{d^{-1}} & P^0 \ar[r]^{d^0} & P^{1} \ar[r] & \cdots }
\]
that is, this complex is exact with terms in $\Proj{\C}$ such that the complex $\Hom_{\B}(P^{\bullet}, E)$ is still exact for any $E$ in $\Proj{\C}$ and $G\cong \Image{d^{0}}$.
Applying the functor $\ml$ we get that $\ml(\mathsf{P}^{\bullet})$ is an exact sequence of projective objects of $\B$, and also $\Hom_{\B}(\ml(P^{\bullet}), Q)\cong \Hom_{\B}(P^{\bullet}, \me(Q))$ is exact for any projective object $Q$ of $\B$. This implies that $\ml(G)$ is a Gorenstein projective object of $\B$.
It follows that $\ml$ induces a triangle functor $\ml\colon \underline{\GProj}\C\lxr \underline{\GProj}\B$.

Finally we show that $(\ml, \me)$ is an adjoint pair between $\underline{\GProj}\C$ and $\underline{\GProj}\B$ with $\ml$ fully faithful. Let $X$ an object in $\GProj\C$ and $Y$ an object in $\GProj\B$. If $f\colon X\lxr \me(Y)$ factors through $P\in \Proj\C$, then the morphism $\ml(f)\colon \ml(X)\lxr \ml \me(Y)$ factors through $\ml(P)$. By Remark~\ref{remdefrecol} (v) there is an exact sequence $0\lxr \Ker\mu_{Y}\lxr \ml\me(Y)\xrightarrow{\mu_{Y}} Y\lxr \mi\mq(Y)\lxr 0$, where $\mu\colon \ml\me\lxr \iden_{\B}$ is the counit of $(\ml, \me)$ and $\Ker\mu_{Y}=\mi(A)$ for some $A$ in $\A$. Applying $\Hom_{\B}(\ml(P), -)$, it follows that $\Hom_{\B}(\ml(P), \ml\me(Y))\cong \Hom_{\B}(\ml(P), Y)$. This implies that $\mu_{Y}\circ \ml(f)\colon \ml(X)\lxr Y$ factors through $\ml(P)$. On the other hand, if $g\colon \ml(X)\lxr Y$ factors through $Q\in \Proj\B$, then $\me(g)\colon \me \ml(X)\lxr \me(Y)$ factors through $\me(Q)$. By $\nu_{X}\colon X\cong \me \ml(X)$, where $\nu\colon \iden_{\C}\lxr \me\circ \ml$ is the unit of $(\ml, \me)$, we get that $\me(g)\circ \nu_{X}\colon X\lxr \me(Y)$ factors through $\me(Q)$. Thus, $\underline{\Hom}_{\B}(\ml(X), Y)\cong \underline{\Hom}_{\C}(X, \me(Y))$.

The proof of (ii) is similar, using Theorem~\ref{pregproj} (iv) and (v), and
showing that $(\me, \mr)$ is an adjoint pair between $\underline{\GProj}\B$ and $\underline{\GProj}\C$.
\end{proof}

\begin{exam}
Let $A$ be an Artin algebra and $e$ an idempotent of $A$ such that $AeA$ is a projective left  $A$-module. Let
$\Lambda =
         \begin{pmatrix}
           A & AeA & AeA \\
           A & A & AeA \\
           A & A & A
         \end{pmatrix}$ and  $e=
         \begin{pmatrix}
           0 & 0 & 0 \\
           0 & 0 & 0 \\
           0 & 0 & 1
         \end{pmatrix}$. Then $(e\Lambda\otimes_{\Lambda}-, \Hom_{A}(e\Lambda,-))$ induces an adjoint pair $(e\Lambda\otimes_{\Lambda}-, \Hom_{A}(e\Lambda,-))$ between $\underline{\Gproj}\Lambda$ and $\underline{\Gproj}A$ with $\Hom_{A}(e\Lambda,-)\colon \underline{\Gproj}A\lxr \underline{\Gproj}\Lambda$ fully faithful.

This follows by applying part (ii) of Proposition~\ref{propfunctorsbetweenstableGproj} with $\Gamma: =\begin{pmatrix}
           A/AeA & 0 \\
           A/AeA & A/AeA \\
          \end{pmatrix}$. Then by Proposition~\ref{ideal} (ii), $(\smod{\Gamma}, \smod{\Lambda}, \smod{A})$ is a recollement which admits a ladder of
$\mr$-height at least four.
\end{exam}

\begin{exam}
Let $\Lambda$ be an Artin algebra and let $\Delta_{(0,0)}=\bigl(\begin{smallmatrix}
\Lambda & \Lambda \\
\Lambda & \Lambda
\end{smallmatrix}\bigr)$ be the Morita context ring as in Example~\ref{tofDelta}. Then the triangle functor $\mt_{2}\colon \underline{\Gproj}\Lambda\lxr \underline{\Gproj}\Delta_{(0,0)}$, $\mt_2(X)=(X,X,0,\iden_{X})$, is fully faithful.
Indeed, the recollement $(\smod\Lambda, \smod\Delta_{(0,0)}, \smod\Lambda)$ admits a ladder of $\ml$-height $\infty$ and
$\mr$-height $\infty$ (see \cite[Remark~4.8, Example~4.9]{GaoPsaroudakis2}), as follows:
\[
\xymatrix@C=0.5cm{
\smod\Lambda \ar[rrr]^{\mz_2} &&& \smod\Delta_{(0,0)} \ar @/_3.0pc/[rrr]^{\mathsf{U}_2} \ar @/^3.0pc/[rrr]^{\mathsf{U}_2} \ar[rrr]^{\mU_1} \ar @/_1.5pc/[lll]_{}  \ar @/^1.5pc/[lll]^{} &&& \smod\Lambda \ar @/_4.5pc/[lll]_{\mt_2} \ar @/^4.5pc/[lll]^{\mt_1}
\ar @/_1.5pc/[lll]_{\mt_1} \ar
 @/^1.5pc/[lll]_{\mt_2}
 },
 \]
Then by Proposition~\ref{propfunctorsbetweenstableGproj}, there are triangle functors $\mU_{1}\colon \underline{\Gproj}\Delta_{(0,0)}\lxr \underline{\Gproj}\Lambda$ and $\mt_{2}\colon \underline{\Gproj}\Lambda \lxr \underline{\Gproj}\Delta_{(0,0)}$ such that $({\mU}_{1}, \mt_{2})$ is an adjoint pair with $\mt_{2}$ fully faithful.
\end{exam}

\end{document}